\newcommand\isomto{\stackrel{\sim}{\smash{\longrightarrow}\rule{0pt}{0.4ex}}}
\newcommand{\RNum}[1]{\uppercase\expandafter{\romannumeral #1\relax}}
\newcommand{\GL}{\textup{GL}}
\date{\today} 
\tikzset{help lines/.style={step=#1cm,very thin, color=gray},
help lines/.default=.5} 
\theoremstyle{plain}
\newtheorem{theorem}{Theorem}[section]
\newtheorem{lemma}[theorem]{Lemma}
\newtheorem{corollary}[theorem]{Corollary}
\theoremstyle{definition}
\newtheorem{definition}[theorem]{Definition}
\newtheorem{inductive step}[theorem]{Inductive step}
\newtheorem{inductive lemma}[theorem]{Inductive Lemma}
\theoremstyle{remark}
\newtheorem{problem}[theorem]{Problem}
\newtheorem{example}[theorem]{Example}
\newtheorem{remark}[theorem]{Remark}
\newtheorem*{remark*}{Remark}
\newtheorem{question}[theorem]{Question}
\newtheorem*{example*}{Example}
\renewcommand{\mod}{\mathrm{mod}} 
\newcommand{\wh}{\widehat}
\newcommand{\p}{\partial}
\newcommand{\Z}{{\mathbf{Z}}}
\newcommand{\R}{{\mathbf{R}}}
\newcommand{\C}{{\mathbf{C}}}
\newcommand{\codim}{{\rm codim}}
\newcommand{\cT}{\mathcal{T}}
\numberwithin{figure}{section}
\title{Caustics of Lagrangian homotopy spheres with stably trivial Gauss map}
\author{Daniel  \'Alvarez-Gavela}
\address{Department of Mathematics \\ Massachusetts Institute of Technology \\ Cambridge, MA, 02139}
\email{dgavela@mit.edu}
\thanks{DA was partially supported by the Simons Foundation}
\author{David Darrow}
\address{Department of Mathematics \\ Massachusetts Institute of Technology \\ Cambridge, MA, 02139}
\email{dddarrow@mit.edu}
\thanks{}
\begin{document}
\begin{abstract}
For each positive integer $n$, we give a geometric description of the stably trivial elements of the group $\pi_n U_n/O_n$. In particular, we show that all such elements admit representatives whose tangencies with respect to a fixed Lagrangian plane consist only of folds. By the h-principle for the simplification of caustics, this has the following consequence: if a Lagrangian distribution is stably trivial from the viewpoint of a Lagrangian homotopy sphere, then by an ambient Hamiltonian isotopy one may deform the Lagrangian homotopy sphere so that its tangencies with respect to the Lagrangian distribution are only of fold type. Thus the stable triviality of the Lagrangian distribution, which is a necessary condition for the simplification of caustics to be possible, is also sufficient. We give applications of this result to the arborealization program and to the study of nearby Lagrangian homotopy spheres.
	
\end{abstract}

\maketitle

 \onehalfspacing
 \tableofcontents
  \section{Introduction}\label{sec:intro}

\subsection{Main result}

Let $(M,\omega)$ be a $2n$-dimensional symplectic manifold, $\gamma \subset TM$ a Lagrangian distribution and $L \subset M$ a closed Lagrangian submanifold. In \cite{AG18b} the first author established the following h-principle: if $\gamma$ is homotopic to a Lagrangian distribution with respect to which $L$ only has fold type tangencies, then $L$ is Hamiltonian isotopic to a Lagrangian submanifold which only has fold type tangencies with respect to $\gamma$. 

This h-principle reduces the problem of eliminating higher tangencies to the underlying homotopical problem. 
In the present article we solve this homotopical problem in the case where $L$ has the homotopy type of a sphere. The central notion is that of stable triviality, which we now define. 

\begin{definition}\label{def: stab triv 1}
We say that $\gamma|_L$ is {\em stably trivial} if $\gamma|_L \oplus \R$ is homotopic to $TL \oplus \R$ as Lagrangian distributions in the symplectic vector bundle $\C^{n+1} \to TM|_L \oplus \C \to L$.
\end{definition}


Our main result is the following, where we assume $n > 1$ (for $n=1$ the problem is trivial).

\begin{theorem}\label{thm:intro}
	Let $L \subset M$ be a Lagrangian homotopy sphere in a symplectic manifold $(M,\omega)$ and $\gamma \subset TM$ a Lagrangian distribution. The tangencies of $L$ with respect to $\gamma$ can be simplified to consist of only folds via a Hamiltonian isotopy of $L$ if and only if $\gamma|_L$ is stably trivial.
\end{theorem}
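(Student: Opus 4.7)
The plan is to translate the geometric simplification question into a purely homotopical one via the Gauss map, then combine the h-principle of \cite{AG18b} with the geometric description of stably trivial elements of $\pi_n(U_n/O_n)$ announced in the abstract. The pair $(TL, \gamma|_L)$ in the symplectic vector bundle $TM|_L$ determines a classifying map $g_\gamma : L \to U_n/O_n$, and since $L$ is a homotopy $n$-sphere this yields a class $[g_\gamma] \in \pi_n(U_n/O_n)$. Stable triviality of $\gamma|_L$ in the sense of Definition \ref{def: stab triv 1} is precisely the vanishing of $[g_\gamma]$ under the stabilization map $\pi_n(U_n/O_n) \to \pi_n(U_{n+1}/O_{n+1})$. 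The h-principle of \cite{AG18b} reformulates the theorem as: $[g_\gamma]$ admits a representative whose tangencies with a fixed Lagrangian plane of $\C^n$ consist only of folds if and only if $[g_\gamma]$ is stably trivial.

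For the "only if" direction, suppose the Gauss map has only fold singularities, i.e.\ intersects the Maslov cycle $\Sigma \subset U_n/O_n$ transversely. I would argue using the standard local model of a fold: near such a point $\gamma_p \cap T_pL$ is a hyperplane and the family of intersections varies along $L$ according to a nondegenerate quadratic form. After adjoining one extra $\R$ factor to both $\gamma|_L$ and $TL$ as in Definition \ref{def: stab triv 1}, the rotation encoded by the fold is canonically unwound by exploiting the additional dimension, yielding an explicit local null-homotopy of $\gamma|_L \oplus \R$ to $TL \oplus \R$ in a neighborhood of the fold locus. Away from the folds the Gauss map takes values in the open stratum where $\gamma$ is transverse to $TL$, and on this region the graph interpolation produces a canonical Lagrangian homotopy; these local homotopies patch together, proving stable triviality.

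For the "if" direction, which I expect to be the heart of the paper, one must show that every stably trivial class in $\pi_n(U_n/O_n)$ admits a fold-only representative; applying the h-principle of \cite{AG18b} then yields the desired Hamiltonian isotopy of $L$. The main obstacle is the constructive classification: identify the kernel of the stabilization map $\pi_n(U_n/O_n) \to \pi_n(U_{n+1}/O_{n+1})$ in geometric terms, and for each such class exhibit an explicit map $S^n \to U_n/O_n$ whose intersection with the Maslov cycle is transverse. I expect this to proceed by analyzing the homotopy long exact sequences relating $\pi_n(U_n/O_n)$ to $\pi_n(U_{n+1}/O_{n+1})$ (and ultimately to the stable Lagrangian Grassmannian $\pi_n(U/O)$), together with a surgery-style construction that assembles fold-only representatives by gluing standard local models along handles, with the combinatorics of the handle attachments controlled by the generators of the kernel. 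Once this classification is complete the theorem follows immediately from the h-principle.
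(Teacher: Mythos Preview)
Your overall architecture is correct and matches the paper: reduce via the h-principle of \cite{AG18b} to a homotopical statement about $\pi_n\Lambda_n$, prove necessity of stable triviality by a local computation near folds, and prove sufficiency by exhibiting fold-only representatives for every stably trivial class. However, there are two concrete problems.

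First, in the ``only if'' direction you have the local picture of a fold wrong: at a fold tangency $\dim(\gamma_p\cap T_pL)=1$, not $n-1$; the intersection is a line, not a hyperplane. Also, transversality to the Maslov cycle alone does not characterize folds---one additionally needs the kernel line field to be transverse to the singular hypersurface. The paper's argument (Lemma~\ref{lem: folds are stably trivial}) is a clean local computation: near the fold hypersurface one splits off a factor of $\C$ in which the distribution rotates once, and after stabilizing to $\C^2$ one verifies explicitly that the two obvious paths in $U_2/O_2$ with the same $\det^2$ are homotopic rel endpoints, which transfers the rotation into the auxiliary $\C$ factor where it is nullhomotopic since $\pi_n\Lambda_1=0$.

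Second, your ``if'' direction is too vague to count as a proof sketch, and in particular misses the essential dichotomy. For $n\neq 1,3,7$ the paper shows (Lemma~\ref{lem: key lem}) via case-by-case diagram chasing with the known values of $\pi_n\Lambda_n$, $\pi_{n-1}O_n$, $\pi_nU_n$ that $\ker(\pi_n\Lambda_n\to\pi_{n-1}O_n)\cap\ker(\pi_n\Lambda_n\to\pi_n\Lambda_{n+1})=0$; combined with a Poincar\'e--Hopf Euler-number computation (Lemma~\ref{ref: poincare-hopf}) showing that formal folds realize every stably trivial real bundle class in $\pi_{n-1}O_n$, this gives the result. The dimensions $n=3,7$ genuinely require a separate argument because $\pi_{n-1}O_n=0$ there, so the map to $\pi_{n-1}O_n$ cannot detect anything; the paper handles these cases by comparing an explicit stably-trivial complex frame on $T(T^*S^n)|_{S^n}$ with the frame coming from quaternionic (resp.\ octonionic) multiplication, using that the latter maps to a generator of $\pi_nO$. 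Your ``surgery-style construction\ldots along handles'' does not capture either of these mechanisms.
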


\begin{remark} We observe:
	\begin{enumerate}
	\item Since $T(T^*L)|_L \simeq L \times \C^n$ as symplectic vector bundles, we may think of the homotopy class of $\gamma|_L$ as an element of $\pi_n \Lambda_n$, where $\Lambda_n$ the Grassmannian of Lagrangian planes in $\C^n$. From this viewpoint stable triviality is equivalent to asking that this element is in the kernel of the stabilization map $\pi_n \Lambda_n \to \pi_n \Lambda_{n+1}$.
			\item The hypothesis that $\gamma|_L$ is stably trivial is automatically satisfied if $n$ is congruent to $0,4,6$ or $7$ modulo $8$, since $\pi_n \Lambda_{n+1} = 0$ for those values of $n$.
			\item The subgroup $\ker(\pi_n \Lambda_n \to \pi_n \Lambda_{n+1})$ of $\pi_n \Lambda_n$ is always cyclic, in fact it is infinite cyclic for even $n$ and cyclic of order 2 for odd $n>1$. We will exhibit an explicit generator in each dimension, see see Remark \ref{rem: gen}.
	\end{enumerate}
\end{remark}

The homotopical problem underlying Theorem \ref{thm:intro}, which by the h-principle \cite{AG18b} is equivalent to Theorem \ref{thm:intro} itself, is to show that each element in the kernel of the stabilization map $\pi_n \Lambda_n \to \pi_n \Lambda_{n+1}$ admits a representative $S^n \to \Lambda_n$ which only has fold type tangencies with respect to some fixed but arbitrary Lagrangian plane $P \subset \C^n$. This is the problem that is addressed in the present article. We formulate this precisely as Theorem \ref{thm: main} below, after introducing the notion of a formal fold, which is a special case of Entov's notion of a chain of corank 1 Lagrangian singularities \cite{En97}.

The Lagrangian Grassmannian $\Lambda_n$ admits a description as the homogeneous quotient $U_n/O_n$ where $U_n$ is the unitary group and $O_n$ the orthogonal group. Thus the homotopy groups of $O_n$, $U_n$ and $\Lambda_n$ are related via the long exact sequence in homotopy associated to the Serre fibration $O_n \to U_n \to \Lambda_n$. These homotopy groups were computed by Bott in the stable range \cite{B59}. However, while $\pi_n U_n$ lies in the stable range, $\pi_{n-1}O_n$ does not, and neither does $\pi_n \Lambda_n$.

 In fact, $\pi_{n-1}O_n$ and $\pi_n \Lambda_n$ are the first nonstable homotopy groups of $O_n$ and $\Lambda_n$, i.e. as soon as we stabilize them once we enter the stable range. Moreover, the stabilization maps $\pi_{n-1}O_n \to \pi_{n-1}O_{n+1}$ and $\pi_n \Lambda_n \to \pi_n \Lambda_{n+1}$ are epimorphisms. These groups lie in the so-called metastable range, which is somewhat more subtle than the stable range, but has also been studied in the literature and exhibits a secondary form of 8-fold periodicity for $n \geq 8$. In particular $\pi_n \Lambda_n$ has been computed \cite{K78}, and this computation is essential input for our approach.

When $n$ is even the problem is simpler because $\pi_n U_n=0$, as was already observed in \cite{AG18b}. The main novelty of the present article is to tackle the case of $n$ odd. The special cases $n=3,7$ are particularly subtle due to the parallelizability of $S^n$ and need to be addressed individually. We tackle the special cases $n=3,7$ by making explicit use of the geometry of the quaternions and octonions respectively. The key homotopical input is the well-known fact that multiplication by unit quaternions (resp. octonions), thought of as an element of $\pi_3O_4$ (resp. $\pi_7O_8)$, maps under stabilization to a generator of $\pi_3O$ (resp. $\pi_7O$). In the case $n=3$ we also sketch an alternative argument using Entov's technique of surgery of corank 1 Lagrangian singularities. 

We will give some applications of Theorem \ref{thm:intro} in Section \ref{sec:intro-app} below, but first we will briefly discuss what kind of results one might hope for in the general case where the distribution $\gamma|_L$ is not assumed to be stably trivial.

\subsection{Homotopically essential caustics of Lagrangian spheres}  

In order to go beyond the results of the present article and achieve a full classification of the homotopically essential caustics of Lagrangian spheres with respect to an arbitrary Lagrangian distribution it will be necessary to understand the geometry of the elements of $\pi_n \Lambda_n$ coming from the generators of the stable groups $\pi_n U_n \simeq \pi_n U$, since these elements are in general not stably trivial. The group $\pi_n U$ is of course well understood from Bott periodicity: it is isomorphic to $\Z$ for $n$ odd and it is trivial for $n$ even. 

While there exist explicit descriptions of the generators of the groups $\pi_{2k-1} U$, for example see \cite{PR03} for simple formulas in $\pi_{2k-1} U_{2^{k-1}}$, these formulas become quite complicated after de-stabilizing down to $\pi_{2k-1} U_{2k-1}$. In particular it is not clear what type of singularities of tangency one obtains, or to what extent they can be simplified. 

\begin{problem}
For each odd integer $n$, exhibit an explicit representative for a generator of $\pi_n U_n$ so that the corresponding element of $\pi_n \Lambda_n$ has the simplest possible tangencies with respect to a fixed but arbitrary Lagrangian plane $P \subset \C^n$.
\end{problem}

For example, when $n=3$ a generator of $\pi_3 \Lambda_3 \simeq \Z/4$, which is the image of a generator of $\pi_3 U_3$ since $\pi_2O_3=0$, admits a representative which has folds along a torus $T \subset S^3$ and pleats along a $(1,1)$ curve on $T$, where we embed the torus in $S^3$ as the boundary of a standard handlebody. See Figure \ref{11curve}, as well as Remark \ref{rem: alt}.

\begin{figure}[h]
\includegraphics[scale=0.45]{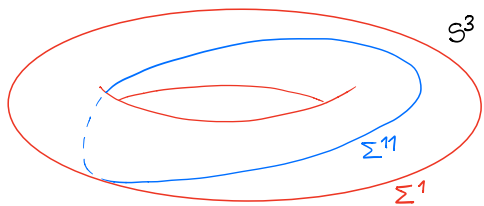}
\caption{The chain of singularities for a generator of $\pi_3 \Lambda_3$.}
\label{11curve}
\end{figure}

For $n=5$ it is not known to us how simple of a tangency locus one can achieve for the image of a generator of $\pi_5 U_5$ in $\pi_5 \Lambda_5 \simeq \Z \oplus \Z/2$.

Moreover, note that understanding the image of $\pi_n U_n \to \pi_n \Lambda_n$ would not be by itself sufficient to achieve a full classification of the homotopically essential caustics of Lagrangian spheres. As the simplest example consider the case $n=2$, where we have $\pi_2 U_2=0$ but the subgroup of stably trivial elements in $\pi_2 \Lambda_2 \simeq \Z$ has index 2. In this case the situation is not so bad: a generator of $\pi_2 \Lambda_2$ admits a representative with a circle of folds and a single pleat at a point on the circle, see Figure \ref{notfold}. However in general it is not clear to us what one should expect.

\begin{figure}[h]
\includegraphics[scale=0.6]{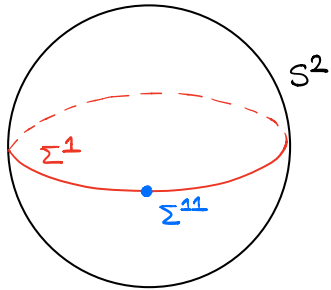}
\caption{The chain of singularities for a generator of $\pi_2 \Lambda_2$.}
\label{notfold}
\end{figure}

The most optimistic hope is that it is always possible to find $\Sigma^2$-nonsingular representatives. While this seems unlikely, we do not know of a counterexample. Hence we formulate the following:

\begin{question}
	Do all elements of $\pi_n \Lambda_n$ admit representatives whose intersection with a fixed Lagrangian plane $P \subset \C^n$ has dimension $\leq 1$?
\end{question}

If we set $\Sigma^{2} = \{ Q \in \Lambda_n : \, \dim(P \cap Q) \geq 2 \} \subset \Lambda_n$ for a fixed but arbitrary Lagrangian plane $P \subset \C^n$ whose choice is immaterial, then the above question is equivalent to asking whether the inclusion $(\Lambda_n \setminus \Sigma^{2}) \subset \Lambda_n$ induces a surjection $ \pi_n(\Lambda_n \setminus \Sigma^{2} )\to \pi_n \Lambda_n$. We note that the inclusion $(\Lambda_n \setminus \Sigma^{2} ) \subset \Lambda_n$ is far from being a homotopy equivalence, as can be easily deduced from the cohomological calculations in the literature. 

For example if $n=3$, denote by $D^3 \subset \R^3$ the unit disk and let $D^3 \to \Lambda_3$ be the Gauss map of a neighborhood of a generic isolated Lagrangian $\Sigma^2$ singularity \cite{AGV85}. Then the resulting element in $\pi_3(\Lambda_3, \Lambda_3 \setminus \Sigma^{2})$ can be shown to be non-trivial by means of a characteristic class in $H^3(\Lambda_3;\Z/2)$ which is Poincar\'e dual to the codimension 3 cycle $\Sigma^{2} \subset \Lambda_3$, see \cite{A67}. 

In fact for any positive integer $n$ the integral cohomology ring of $\Lambda_n$ is generated by characteristic classes dual to similarly defined cycles \cite{F68}. While these classes may be used to prove results establishing the necessity of higher singularities, a different method will most likely be needed to prove results in the opposite direction.

\subsection{Applications}\label{sec:intro-app}

We present two applications of our main result Theorem \ref{thm:intro}, one to the arborealization program and another to the study of nearby Lagrangian homotopy spheres.

\subsubsection{Arborealization program}

As our first application we give a simple proof that polarized Weinstein manifolds which are obtained from the standard Darboux ball by a single handle attachment admit arboreal skeleta. This recovers a special case of the main theorem of \cite{AGEN20b}, where it is shown more generally that any polarized Weinstein manifold admits an arboreal skeleton. The argument used in \cite{AGEN20b} is rather involved due to the subtleties arising from the interaction of three or more strata, whereas for the special class of polarized Weinstein manifolds obtained from a single handle attachment one can give a rather simple argument. Namely, the proof consists of a direct application of Theorem \ref{thm:intro} together with Starkston's local model for the arborealization of a semi-cubical cusp \cite{St18}, which was used in that paper to arborealize Weinstein manifolds of dimension four.


In addition to the simplicity of the argument, a novel feature of the result we establish  is that the arboreal skeleton we end up with has arboreal singularities of a particularly simple type. This conclusion does not follow directly from \cite{AGEN20b}.

Before we state the result, recall that arboreal singularities are modeled on rooted trees equipped with a decoration of a sign $\pm1$ for each edge not adjacent to a root \cite{St18, AGEN20a}. By the height of a vertex we mean the number of edges between that vertex and the root, by the height of a tree we mean the maximal height among all vertices and by the height of an arboreal singularity we mean the height of the corresponding signed rooted tree.

\begin{corollary}
	Let $(W, \lambda, \phi)$ be a Weinstein manifold such that $TW$ admits a global field of Lagrangian planes and such that the Morse Lyapunov function $\phi$ only has two critical points. Then by a homotopy of the Weinstein structure we can arrange it so that the skeleton of $(W, \lambda)$ becomes arboreal, and moreover so that the arboreal singularities which appear in the skeleton have height $ \leq 2$.
\end{corollary}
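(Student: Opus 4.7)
The plan is to identify the skeleton of $W$ with a Lagrangian homotopy sphere whose singular structure is governed by its tangencies with $\gamma$, apply Theorem~\ref{thm:intro} to simplify those tangencies to folds, and then invoke Starkston's local model for the semi-cubical cusp \cite{St18} to arborealize.

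First I would describe the skeleton. Let $p_0, p_1$ be the two critical points of $\phi$; the case of interest is when $p_0$ has index $0$ and $p_1$ has index $n$ (otherwise the skeleton has dimension below $n$ and the statement becomes essentially trivial). A gradient-like vector field exhibits $W$ as a critical Weinstein handle $H$ attached to a Darboux ball $B$ along a Legendrian sphere $\Lambda \subset (\partial B, \xi_{\mathrm{std}})$. The skeleton is then the union $L_0 \cup D$, where $D \subset H$ is the Lagrangian $n$-disk core of the handle with $\partial D = \Lambda$ and $L_0 \subset B$ is the Lagrangian cone from $p_0$ over $\Lambda$; after a Weinstein-neighborhood smoothing along $\Lambda$ this is a Lagrangian homotopy sphere $L \subset W$, singular only at the cone vertex $p_0$.

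Next I would verify that $\gamma|_L$ is stably trivial so as to invoke Theorem~\ref{thm:intro}. Both disks $D$ and $L_0$ are contractible, so on each of them the pair $(\gamma, TL)$ is individually trivializable; after stabilizing once (as in Definition~\ref{def: stab triv 1}), the coherence of these two trivializations along $\Lambda$ is a consequence of the global existence of $\gamma$ over $W$, which provides an explicit null-homotopy of the gluing data in $\pi_{n-1}\Lambda_{n+1}$. With stable triviality in hand, Theorem~\ref{thm:intro} produces a Hamiltonian isotopy of $L$ after which its tangencies with $\gamma$ form a smooth codimension-$1$ fold locus $\Sigma \subset L$. A standard parametric Moser argument then translates this Hamiltonian isotopy into a homotopy of the Weinstein structure $(\lambda, \phi)$ supported in the interior of $W$ and away from the two critical points.

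Finally I would invoke Starkston's local model \cite{St18}: near each point of $\Sigma$ the skeleton is modeled on the front of a semi-cubical cusp, and her construction provides an explicit arboreal deformation there. The resulting arboreal singularities have height $\leq 2$: the smooth top-dimensional Lagrangian provides the root (height $0$), the fold locus $\Sigma$ provides height-$1$ strata, and the cuspidal tips provide height-$2$ strata, with no deeper stratification arising from the single-handle structure. The main obstacle is the verification of stable triviality for $\gamma|_L$, since this is where the single-handle and global-polarization hypotheses are used in an essential way; translating the resulting Hamiltonian isotopy into an honest Weinstein homotopy that preserves the critical structure of $\phi$ is routine but requires care at the cone vertex $p_0$.
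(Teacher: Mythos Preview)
Your approach has a genuine conceptual gap: you are applying Theorem \ref{thm:intro} to the wrong object, in the wrong dimension, and conflating the tangency locus of a smooth Lagrangian with a polarization and the singular strata of a skeleton.

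The skeleton of $(W,\lambda,\phi)$ is \emph{not} a Lagrangian homotopy sphere. It is the union of the point $p_0$ with the $n$-disk $D=W^s(p_1)$; there is no ``Lagrangian cone from $p_0$ over $\Lambda$'' inside the Darboux ball, since the stable manifold of a minimum is a point. Even if you could smooth this into a Lagrangian sphere $L$, arranging that $L$ has only fold tangencies with $\gamma$ does nothing toward arborealization: $L$ is then a smooth Lagrangian, hence already arboreal with no singularities at all, and the fold locus $\Sigma\subset L$ is not a stratum of the skeleton but merely a locus where $TL$ meets $\gamma$. Your claim that ``near each point of $\Sigma$ the skeleton is modeled on the front of a semi-cubical cusp'' is therefore unfounded.

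The paper's argument is structurally different. One first performs a Morse--Bott deformation of the Weinstein structure near $p_0$, replacing the Darboux ball by the disk cotangent bundle $U^*D^n$. The skeleton then becomes the zero section $D^n$ with the Liouville cone over $\Lambda$ glued on along the front projection $\pi|_\Lambda:\Lambda\to D^n$; the singularities of the skeleton are precisely the singularities of this front. Theorem \ref{thm:intro} is applied to the $(n-1)$-dimensional Legendrian sphere $\Lambda$ with respect to the vertical distribution $\nu$ of $S^*D^n$, and the stable triviality needed is in $\pi_{n-1}\Lambda_{n-1}$, verified by observing that $\nu|_\Lambda\oplus\R$ is the restriction of the global polarization $\eta$ and hence extends over the disk $W^s(p_1)$. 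Only then do the front singularities become semi-cubical cusps, to which Starkston's local model applies. You have skipped the Morse--Bott blowup entirely, which is the step that converts the problem into one about caustics.
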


We briefly describe the proof, which follows the blueprint of \cite{St18}. First one blows up a Darboux ball around the origin into the cotangent bundle of a Morse-Bott disk. The stable manifold of the other critical point then lands on this Morse-Bott disk along a front projection. The singularities of this front are a priori very complicated, but existence of a polarization is precisely the homotopical input needed for Theorem \ref{thm:intro} to apply. Hence by a Legendrian isotopy of the attaching Legendrian, which can be realized by a homotopy of the Weinstein structure, we may assume that the front only has semi-cubical cusp singularities. Finally the cusps can be traded for arboreal singularities as shown in \cite{St18}.

\subsubsection{Nearby Lagrangian homotopy spheres}   

As our second application we show that any nearby Lagrangian in the cotangent bundle of a homotopy sphere can be deformed via a Hamiltonian isotopy so that it is generated by a framed generalized Morse family on some bundle of tubes. We briefly explain the terminology before formally stating the result.

Following Igusa \cite{I87}, a framed generalized Morse family, or {\em framed function} for short, on the total space of a fibre bundle $W\to M$ is a function $f:W \to \R$ such that the restriction of $f$ to each fibre is Morse or generalized Morse (i.e. we allow cubic birth/death of Morse critical points), and moreover such that the negative eigenspaces of the fibrewise Hessian at the fibrewise critical points are equipped with framings which vary continuously over $M$ and are suitably compatible at the birth death points. 

Following Waldhausen \cite{W82}, tubes are codimension zero submanifolds with boundary $T \subset \R^{n+1}$ which up to a compactly supported isotopy are given by the standard model for a smooth handle attachment on the boundary of the half space $\{x_{n+1} \leq 0\}$. A {\em tube bundle} is a fibre bundle of tubes $T \to W \to M$ where we assume that all tubes are contained in a fixed Euclidean space, i.e. $W \subset M \times \R^{n+1}$ and $W \to M$ is the restriction of the obvious projection $M \times \R^{n+1} \to M$.

We can now state:

\begin{corollary}\label{cor: ap1}
	Let $\Sigma_0,\Sigma_1$ be homotopy spheres and $\Sigma_1 \subset T^*\Sigma_0$ a Lagrangian embedding. There exists a Hamiltonian isotopy $\varphi_t$ of $\Sigma_1$ such that $\varphi_1(\Sigma_1)$ is generated by a framed function on some tube bundle $T \to W \to \Sigma_0$.
\end{corollary}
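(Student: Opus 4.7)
The plan is to apply Theorem~\ref{thm:intro} with $\gamma$ the vertical Lagrangian distribution of $T^*\Sigma_0$, and then to convert the resulting fold-only Lagrangian into a framed function on a tube bundle via classical generating-function machinery.

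First I would verify the hypothesis of Theorem~\ref{thm:intro} for $\gamma|_{\Sigma_1}$. Since homotopy spheres are stably parallelizable, both $T\Sigma_1 \oplus \R$ and $\gamma|_{\Sigma_1} \oplus \R \cong (\pi|_{\Sigma_1})^*T^*\Sigma_0 \oplus \R$ are trivializable as real vector bundles. A direct obstruction-theoretic argument on $\Sigma_1 \simeq S^n$ then shows that, after one stabilization, the two Lagrangian subbundles are homotopic as Lagrangian distributions in the trivialized symplectic bundle $T(T^*\Sigma_0)|_{\Sigma_1} \oplus \C$; equivalently, the Gauss-map class of $\gamma|_{\Sigma_1}$ in $\pi_n\Lambda_n$ lies in the kernel of the stabilization map to $\pi_n\Lambda_{n+1}$.

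Applying Theorem~\ref{thm:intro} we obtain a Hamiltonian isotopy $\varphi_t$ with $L := \varphi_1(\Sigma_1)$ having only fold tangencies with $\gamma$; equivalently, the Lagrangian projection $\pi|_L \colon L \to \Sigma_0$ is a smooth map between $n$-manifolds whose only singularities are folds. Away from the caustic (the $\pi$-image of the fold locus), $L$ is locally a finite union of graphs of differentials $df_i$, and across the caustic two such sheets collide and disappear along the standard fold model $f(x,t) = t^3/3 - \phi(x)\,t$ in the fibre direction. Assembling these local pieces in the classical way (for example via symplectic reduction from a Weinstein neighborhood of $L$) produces a fibre bundle $W \to \Sigma_0$ together with a smooth function $f\colon W \to \R$ that generates $L$ and is fibrewise generalized Morse, with birth-death singularities occurring exactly along the caustic.

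It remains to arrange that $W$ is a tube bundle and that $f$ is framed. Stabilizing $f$ by adding fibrewise nondegenerate quadratic forms does not change the generated Lagrangian, and allows us to embed $W$ into $\Sigma_0 \times \R^{n+1}$ as a tube bundle in Waldhausen's sense, with one handle attached to the standard half-space fibre for each fibrewise critical point of $f$. The framings of the negative Hessian eigenspaces required by Igusa's definition are then inherited from the stable Lagrangian trivialization used in the first step of the argument, and the compatibility condition at birth-death points can be arranged after enough stabilization. The main obstacle will be this last step: even once the fibrewise generalized Morse presentation on a tube bundle has been constructed, promoting it to a framed function is an obstruction-theoretic problem, and showing that the relevant obstruction class vanishes is precisely the reason why the stable triviality of $\gamma|_{\Sigma_1}$ was needed at the outset.
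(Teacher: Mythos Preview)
Your first paragraph contains a genuine gap. Stable parallelizability of $\Sigma_0$ and $\Sigma_1$ only tells you that $T\Sigma_1\oplus\R$ and $\gamma|_{\Sigma_1}\oplus\R$ are isomorphic as \emph{real} vector bundles; it does not follow by ``direct obstruction theory'' that they are homotopic as Lagrangian subbundles of $T(T^*\Sigma_0)|_{\Sigma_1}\oplus\C$. The obstruction to such a homotopy after one stabilization is exactly the stable Gauss map class in $\pi_n\Lambda_{n+1}\cong\pi_n(U/O)$, and this group is nonzero for $n\equiv 1,2,3,5\pmod 8$. Two Lagrangian subbundles with the same underlying real bundle can differ by any element in the image of $\pi_nU_{n+1}\to\pi_n\Lambda_{n+1}$, which is typically nontrivial. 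The vanishing of this class for a nearby Lagrangian homotopy sphere is a deep theorem of Abouzaid--Courte--Guillermou--Kragh \cite{ACGK20}, proved using twisted generating functions and Floer-theoretic input; the paper invokes it as a black box and you cannot avoid it.

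There is a second, structural divergence. The paper does not attempt to assemble a generating function ``from local pieces'' once the Lagrangian has only folds. Instead it uses \cite{ACGK20} a second time: that paper already produces a generating function for the original $\Sigma_1$ on a tube bundle, and then the Hamiltonian isotopy $\varphi_t$ is covered by a homotopy of generating functions via the homotopy lifting property (Sikorav, Eliashberg--Gromov) on a suitable stabilization of the tube bundle. Your proposed direct gluing is not obviously well-defined globally, and in any case is not how the paper proceeds. Finally, for the framing step the paper does not appeal to the stable Lagrangian trivialization; it uses Abouzaid's theorem \cite{A12} that $\pi|_{\Sigma_1}:\Sigma_1\to\Sigma_0$ is a homotopy equivalence, which allows one to pull back a bundle $F\to\Sigma_0$ and perform a twisted stabilization killing the stable bundle of $f$. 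You have not identified this ingredient.
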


The starting point of the argument is the recent article \cite{ACGK20} of Abouzaid, Courte, Guillermou and Kragh, where it is shown that if $\Sigma_0, \Sigma_1$ are homotopy spheres and $\Sigma_1 \subset T^*\Sigma_0$ is a Lagrangian embedding, then there exists a tube bundle $W \to \Sigma_0$ such that $\Sigma_1$ is generated by a function $f:W \to \R$. 

In particular it follows from their result that the stable Gauss map $\Sigma_1 \to U/O$ is trivial, which unwinding the definition means that Theorem \ref{thm:intro} applies to $M=T^*\Sigma_0$, $L=\Sigma_1$ and $\gamma= \ker(d \pi)$ the vertical distribution, where $\pi: T^*\Sigma_0 \to \Sigma_0$ is the cotangent bundle projection. Therefore, $\Sigma_1$ can be deformed by a Hamiltonian isotopy $\varphi_t$ so that $\varphi_1(\Sigma_1)$ only has fold tangencies with respect to the vertical distribution. 

After replacing the tube bundle $W$ with an appropriate stabilization, by the homotopy lifting property for generating families \cite{Si86} it is possible to cover the isotopy $\varphi_t(\Sigma_1)$ by a homotopy of generating functions. At the end of the isotopy we obtain a generating function for $\varphi_1(\Sigma_1)$ which only has Morse or Morse birth/death critical points. Indeed, Morse critical points correspond to points where $\varphi_1(\Sigma_1)$ and the vertical distribution are transverse and Morse birth/death critical points correspond to fold type tangencies. 

Finally, this function may not admit a framing but one can fix this by further replacing $W$ with a twisted stabilization of $W$ using the fact that the projection $\varphi_1(\Sigma_1) \to \Sigma_0$ is a homotopy equivalence \cite{A12}.

Framed functions are the homotopically canonical way of studying fibre bundles via parametrized Morse theory \cite{I02}. It is our hope that Corollary \ref{cor: ap1} may be useful for the study of nearby Lagrangians via parametrized Morse theory.

\subsection{Structure of the article}
In Section \ref{sec: chain} we introduce the notion of a formal fold and translate the geometric problem into a homotopical problem. In Section \ref{sec: compute} we perform the homotopical calculation necessary to establish our main theorem in dimensions not equal to 3 or 7. In Section \ref{sec: excep} we tackle the special dimensions 3 and 7. In Section \ref{sec: app} we give the proofs of the applications stated above.

\subsection{Acknowledgements} This article was the outcome of an undergraduate Research Opportunities Program (UROP) which the second author undertook at MIT under the supervision of the first author. We are grateful to the UROP program for enabling this collaboration. We would also like to thank Sasha Givental for useful conversations.

\section{Formal folds}\label{sec: chain}

\subsection{Tangencies of fold type}

\subsubsection{Lagrangian tangencies}

Let $(M, \omega)$ be a $2n$-dimensional symplectic manifold, $L \subset M$ a smooth Lagrangian submanifold and $\gamma \subset TM$ a Lagrangian distribution.

\begin{definition}
	A {\em tangency} between $L$ and $\gamma$ is a point $x \in L$ such that $T_x L \cap \gamma_x \neq 0$.
\end{definition}

If $\gamma=\ker(d \pi)$ for a Lagrangian fibration $\pi : M \to B$, then tangencies of $L$ with respect to $\gamma$ are the same as singular points of the restriction $\pi|_{L} : L\to B$, i.e. points $x \in L$ at which the differential $d \pi_x : T_x L \to T_{\pi(x)} B$ fails to be an isomorphism. If $L$ is exact then we may lift it to a Legendrian $\wh L$ in the contactization $M \times \R$ and the tangencies of $L$ with respect to $\gamma$ can also be thought of as the singularities of the front $\wh L \to B \times \R$, which is known as the caustic in the literature \cite{A90}. 

A  tangency point $x \in L$ is said to be of corank 1, or $\Sigma^2$-nonsingular, if $\dim( T_x L \cap \gamma_x) = 1$. The locus of corank 1 tangencies $\Sigma^1 = \{ x \in L : \,  \dim(L \cap \gamma) = 1 \} $ is $C^\infty$-generically a smooth hypersurface in $L$ and $\ell = (TL \cap \gamma)|_{\Sigma^1}$ is a line field inside $TL|_{\Sigma^1}$. We say that $\gamma$ is $\Sigma^2$-nonsingular if all its tangencies with $L$ are $\Sigma^2$-nonsingular, so the tangency locus of $L$ with $\gamma$ is equal to $\Sigma^1$, which in this case is $C^\infty$-generically a smooth, closed hypersurface in $L$ without boundary. 

 While $C^\infty$-generic Lagrangian tangencies are non-classifiable, the class of $\Sigma^2$-nonsingular tangencies does admit a finite list of local models, at least in the case where $\gamma$ is integrable \cite{AGV85}. The simplest type of $\Sigma^2$-nonsingular tangency is called a fold. This is the only type of tangency we will need to consider in the present article. 
 
\begin{definition} We say that a tangency point $x \in \Sigma^1$ is of {\em fold type} if $\Sigma^1$ is transversely cut out in a neighborhood of $x$ and $\ell_x \pitchfork T_x\Sigma^1$ inside $T_x L$.
\end{definition}

When $\gamma$ is integrable, a fold tangency is locally symplectomorphic to the normal form
\[ L = \{ q= p^2 \} \times \R^{n-1} \subset T^*\R \times T^*\R^{n-1} , \qquad  \gamma = \ker( T^*\R^n \to \R^n). \]

\begin{figure}[h]
\includegraphics[scale=0.7]{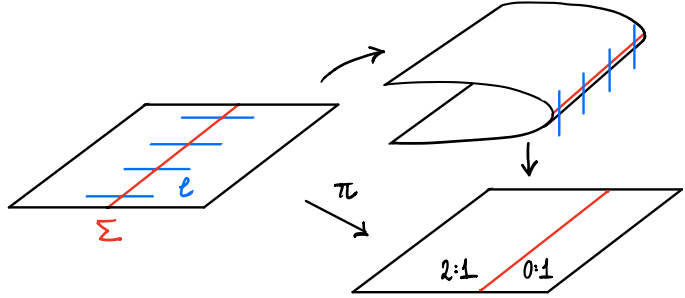}
\caption{A fold type tangency. Note that $\ell= \ker(d \pi|_L)$ is transverse to the singular locus $\Sigma \subset L$.}
\label{foldnatomy}
\end{figure}

\begin{remark} We note that in the contactization, fold tangencies correspond to semi-cubical cusps of the Legendrian front. \end{remark}

\subsubsection{The h-principle for the simplification of caustics}

In order to reduce Theorem \ref{thm:intro} to a homotopical problem, we use the h-principle for the simplification of caustics established by the first author in \cite{AG18b}. It states the following:

\begin{theorem}[\cite{AG18b}]\label{thm:h-p} Let $(M,\omega)$ be a symplectic manifold, $L \subset M$ a Lagrangian submanifold and $\gamma \subset TM$ a Lagrangian distribution. Suppose that $\gamma$ is homotopic through Lagrangian distributions to a Lagrangian distribution with respect to which $L$ only has fold tangencies. Then $L$ is Hamiltonian isotopic to a Lagrangian submanifold which only has fold tangencies with respect to $\gamma$.
\end{theorem}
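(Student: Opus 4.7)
My plan is to establish this h-principle by translating the problem to the Legendrian setting and then applying a wrinkling argument, followed by surgery of auxiliary singularities to clean up.

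First I would reduce to the Legendrian setting. In a Weinstein neighborhood of $L$ one can choose Darboux coordinates that integrate $\gamma$ to a Lagrangian fibration $\pi : M \to B$, and lift $L$ (or its exact local model) to a Legendrian $\wh L$ in the contactization $M \times \R$. Under this lift, tangencies of $L$ with $\gamma$ become singularities of the front projection $\wh L \to B \times \R$, and fold tangencies correspond to the classical semicubical cusps of wave fronts. The problem thus becomes one of simplifying the singularities of the front of $\wh L$ by Legendrian isotopy.

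Next I would set up the parametric problem. The homotopy $\gamma_t$, together with the assumption that $L$ has only fold tangencies with respect to $\gamma_1$, yields a homotopy of Lagrangian Gauss maps $G_t : L \to \Lambda_n$ ending at a map that is transverse to the fold stratum and avoids all higher strata $\Sigma^{\geq 2}$. My goal is to realize this formal homotopy by an actual Hamiltonian isotopy $\varphi_t$ of $L$. For this I would invoke an Eliashberg-Mishachev style wrinkled Legendrian embedding theorem: any formal Legendrian isotopy can be approximated by a genuine isotopy through Legendrians whose fronts acquire only cusps together with a controlled collection of \emph{wrinkles}, i.e. spherical pairs of cusps bounding discs of swallowtail-type singularities. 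Applied parametrically to $G_t$, this produces a Legendrian isotopy whose endpoint has a front with only folds plus finitely many such auxiliary wrinkles.

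Finally I would cancel the auxiliary wrinkles by Entov-style surgery of corank 1 Lagrangian singularities. Each wrinkle admits a local Legendrian isotopy, supported in a small neighborhood, that replaces it by additional smooth fold lines without introducing any higher corank tangencies. Concatenating these local surgeries with the wrinkled isotopy produces a global Legendrian isotopy of $\wh L$ whose final front has only fold tangencies, and descending back to $M$ and integrating the symplectically dual vector field of the time derivative shows that the ambient deformation can be arranged to be Hamiltonian.

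The main obstacle I expect is the simultaneous global control of all wrinkles: the wrinkling theorem produces a generically large and unavoidable collection of auxiliary singularities, and one must cancel them in a way that neither reintroduces $\Sigma^{\geq 2}$ tangencies nor breaks the Hamiltonian character of the isotopy. Carrying this out requires the full corank 1 surgery calculus together with a relative, parametric version of the cancellation moves compatible with the Lagrangian constraint; this is where I expect the technical heart of the argument to lie.
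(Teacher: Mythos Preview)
The paper does not prove this theorem; it is quoted verbatim from \cite{AG18b} as an established h-principle and used as a black box. There is therefore no ``paper's own proof'' to compare your proposal against.

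That said, your sketch is broadly in the spirit of how such results are proved in \cite{AG18b}, which does proceed via a Legendrian lift, a holonomic approximation/wrinkling argument, and a cleanup of the auxiliary singularities. However, your first reduction step contains an error: you assert that in a Weinstein neighborhood one can integrate $\gamma$ to a Lagrangian fibration. A Lagrangian distribution need not be integrable (the paper itself flags ``when $\gamma$ is integrable'' as a special case when discussing local models), so you cannot in general produce a fibration $\pi:M\to B$ with $\gamma=\ker d\pi$. The actual argument in \cite{AG18b} works directly with the non-integrable distribution, or equivalently reduces to $M=T^*L$ with the \emph{vertical} distribution as reference and treats $\gamma$ as a section of the Lagrangian Grassmann bundle; the ``front'' picture you invoke is then a heuristic rather than a literal reduction. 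Your remaining outline (wrinkling to realize the formal homotopy, then resolving wrinkles into folds) is the correct shape, but as you yourself note, the technical heart lies precisely in the parametric control of the wrinkle resolution, and your proposal does not go beyond naming that difficulty.
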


Hence to prove Theorem \ref{thm:intro} it suffices to show that under the stated hypotheses $\gamma$ is homotopic to a Lagrangian distribution which only has fold tangencies with $L$. 

\begin{remark}
The hypothesis in Theorem \ref{thm:h-p} only cares about the restriction of $\gamma$ to $L$, since any homotopy of $\gamma|_L$ can be extended to a homotopy of $\gamma$. Furthermore, by taking a Weinstein neighborhood of $L$ we may immediately reduce to the case $M=T^*L$, which is therefore the only case we will consider in what follows.
\end{remark}

\subsection{Formal folds and their stable triviality} 
\subsubsection{Formal folds}

The homotopical object underlying a Lagrangian distribution with only fold type tangencies is a formal fold, which is defined as follows:

\begin{definition}
A \emph{formal fold} in a smooth manifold $L$ consists of a pair $(\Sigma, v)$, where $\Sigma \subset L$ is a co-orientable smooth closed hypersurface in $L$ and $v$ is a choice of co-orientation of $\Sigma$. \end{definition}

\begin{figure}[h]
\includegraphics[scale=0.6]{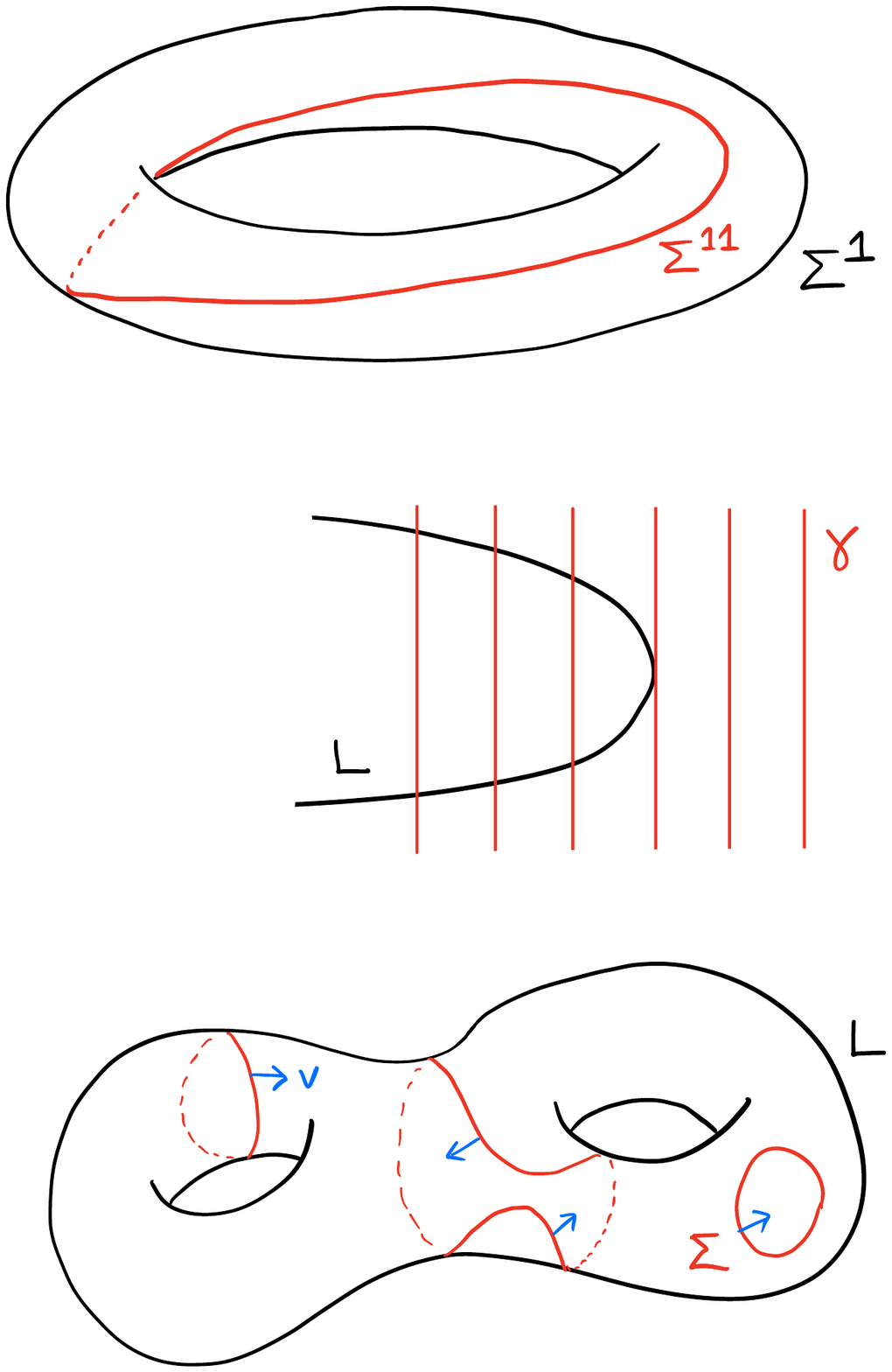}
\caption{A formal fold on a surface consists of a disjoint union of co-oriented simple closed curves.}
\label{formalold}
\end{figure}

\begin{remark} Formal folds are the simplest version of the notion of a chain of Lagrangian singularities as defined by Entov \cite{En97}, generalizing the notion of a chain of singularities for smooth maps \cite{E72}. We will not need this more general notion in what follows and hence will not discuss it further, with the exception of the non-essential Remark \ref{rem: alt}. \end{remark}

Let $\gamma\subset T(T^*L)|_{L}$ be a Lagrangian distribution which has only fold type tangencies with respect to $L$. That is, the intersection $\gamma_x \cap T_xL \subset T_x(T^*L)$ has dimension $\leq 1$ for any $x\in L$, the subset $\Sigma = \{ x\in L :\,  \gamma_x \cap T_xL \neq 0 \} \subset L$ is a transversely cut out hypersurface and $\ell = \gamma|_\Sigma \cap TL$ is a line field along $\Sigma$ which is transverse to $\Sigma$. To such a $\gamma$ we associate a formal fold $(\Sigma, v)$ by specifying $v$ to be the Maslov co-orientation \cite{A67, En97}.

Conversely, if $(\Sigma, v)$ is a formal fold on $L$, there is a homotopically unique Lagrangian distribution $\gamma(\Sigma,v) \subset T(T^*L)|_{L}$ which has only fold type tangencies with respect to $L$ and whose associated formal fold is $(\Sigma, v)$. For existence, let $\Sigma \times [-1,1]$ be a tubular neighborhood of $\Sigma$ in $L$ such that the coordinate $t \in [-1,1]$ is compatible with the co-orientation, i.e. $v= \p / \p t$. On $L \setminus ( \Sigma \times [-1,1])$ we define $\gamma(\Sigma, v)$ to be the vertical distribution. On $\Sigma \times [-1,1]$ we define it to be the direct sum of the vertical distribution in $T(T^*\Sigma)|_{\Sigma}$ and the line field $\ell \subset T(T^*[-1,1])|_{[-1,1]}$ defined by
\[ \ell_t = \text{span}\big\langle\sin\left( \frac{\pi t}{ 2}\right) \frac{\p }{\p u} + \cos\left(\frac{ \pi t }{2}\right)\frac{\p}{\p t} \big\rangle \subset T_t(T^*[-1,1]) \]
where $u$ is the momentum coordinate dual to $t$, see Figure \ref{movingline}.

\begin{figure}[h]
\includegraphics[scale=0.6]{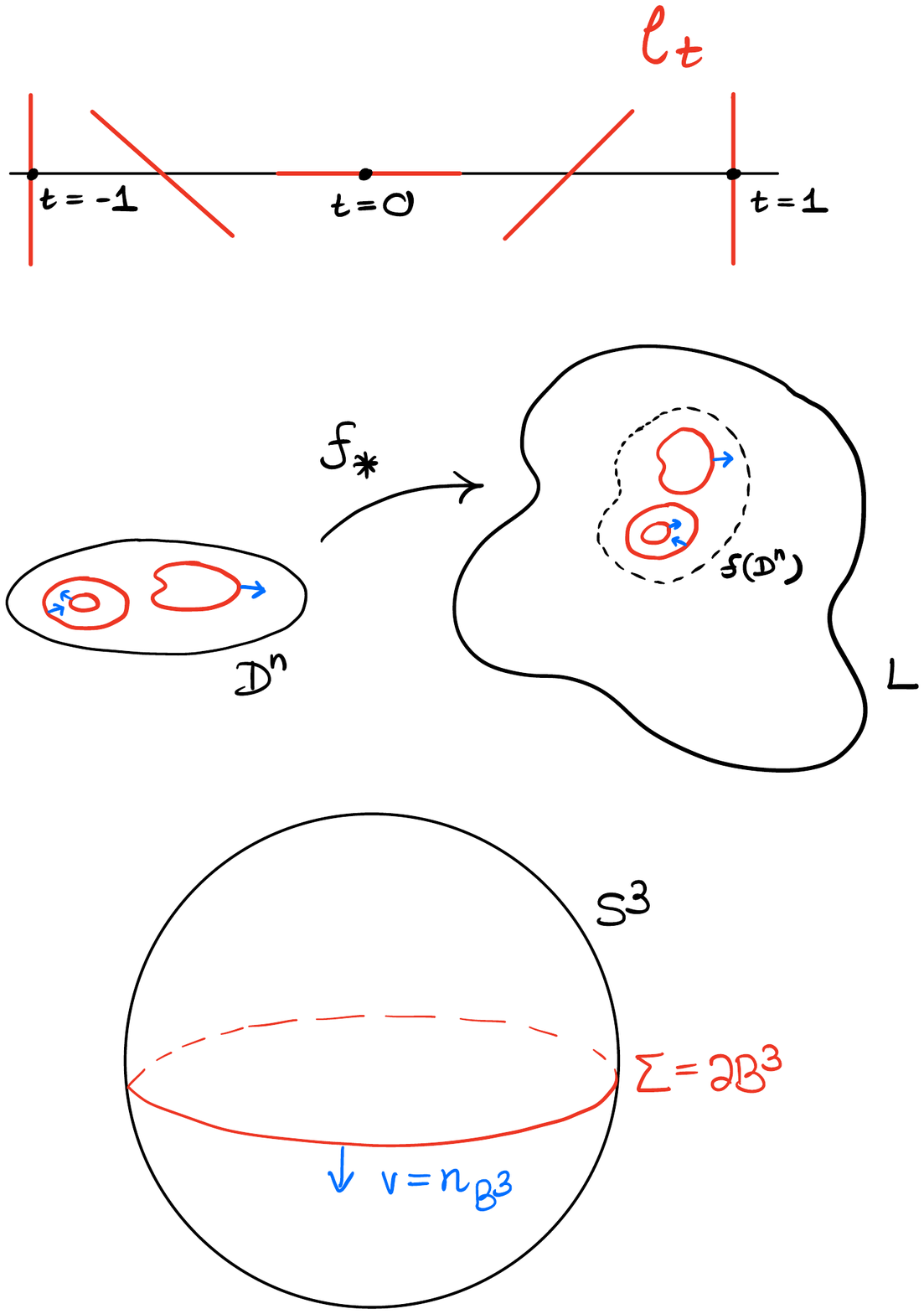}
\caption{The line field $\ell_t$.}
\label{movingline}
\end{figure}

The fact that $\gamma(\Sigma,v)$ is homotopically unique is straightforward to verify using the well-known fact that that the space of Lagrangian planes in $\C^n$ which are transverse to a fixed Lagrangian plane $P$ is contractible; indeed this space can be identified with the (convex) space of quadratic forms on any Lagrangian plane $Q$ which is transverse to $P$. 

Finally, we note that the homotopy class of $\gamma$ only depends on the formal fold $(\Sigma, v)$ up to ambient isotopy in $L$.

\subsubsection{Stable triviality of formal folds}


Let $\gamma \subset T(T^*L)|_L$ be a Lagrangian distribution defined along $L$. We say that $\gamma$ is trivial if it is homotopic through Lagrangian distributions to the vertical distribution, which is defined to be $\nu = \ker(d \pi)$ for $\pi:T^*L \to L$ the cotangent bundle projection. More generally:

\begin{definition}\label{def: stab triv 2}
We say that $\gamma$ is {\em stably trivial} if $\gamma \oplus \R$ and  $\nu \oplus \R$ are homotopic as Lagrangian distributions in $T(T^*L)|_L \oplus \C$. 
\end{definition}

\begin{remark}
This notion of stable triviality is equivalent to the one given in Definition \ref{def: stab triv 1} since $TL$ and $\nu$ are homotopic Lagrangian distributions in $T(T^*L)|_L$. For example, this can be seen by rotating one to the other via a compatible almost complex structure $J$ on $T^*L$ such that $J T_xL = \nu_x$ in $T_x(T^*L)$ for all $x \in L$.
\end{remark}

\begin{lemma}\label{lem: folds are stably trivial}
Let $(\Sigma, v)$ be a formal fold in $L$. Then $\gamma(\Sigma,v)$ is stably trivial.
\end{lemma}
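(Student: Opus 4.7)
The plan is to construct an explicit homotopy of Lagrangian distributions from $\gamma(\Sigma,v) \oplus \R$ to $\nu \oplus \R$ in the stabilized ambient bundle $T(T^*L)|_L \oplus \C$. Since by construction $\gamma$ agrees with $\nu$ on $L \setminus (\Sigma \times [-1,1])$, the homotopy can be taken constant there, and the problem localizes to the tube $\Sigma \times [-1,1]$. I will construct the homotopy on the tube in such a way that it is stationary at $\Sigma \times \{-1,1\}$, so that it extends continuously with the trivial homotopy on the complement.

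Using the symplectic product decomposition
\[
T(T^*L)|_{\Sigma \times [-1,1]} \oplus \C \;\cong\; T(T^*\Sigma)|_\Sigma \oplus \bigl(T(T^*[-1,1])|_{[-1,1]} \oplus \C\bigr)
\]
and the fact that $\gamma \oplus \R$ and $\nu \oplus \R$ both take the form $\nu_\Sigma \oplus (\cdot)$ on the tube, the problem reduces to homotoping the one-parameter family $t \mapsto \ell_t \oplus \R$ of Lagrangian 2-planes in $\C^2$ to the constant family $\nu_t \oplus \R$, with the homotopy rel the endpoints (where $\ell_{\pm 1} = \nu_t$ already). Equivalently, I need to show that the loop $t \mapsto \ell_t \oplus \R$ obtained from the tube factor is null-homotopic in $\Lambda_2$.

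The main obstacle, and the step where I expect the real work to lie, is verifying this null-homotopy: the rotation $\ell_t$ itself represents a generator of $\pi_1 \Lambda_1 \cong \Z$, so I must check that stabilization by the real line $\R \subset \C$ genuinely allows one to kill it. The plan is to exhibit the null-homotopy explicitly, using the extra degree of freedom provided by the stabilization: I rotate the 2-plane $\ell_t \oplus \R$ inside $\C^2$ via a smooth two-parameter family of unitary transformations $A_{s,t} \in U(2)$, chosen so that at $s = 0$ one has $A_{0,t} = \mathrm{Id}$, at $s = 1$ the image of $\ell_t \oplus \R$ coincides with the fixed Lagrangian $\nu_t \oplus \R$, and at $t = \pm 1$ the transformation is trivial so that the homotopy glues with the trivial one on the complement.

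Once this local null-homotopy is in hand, the desired global homotopy on $L$ is obtained by extending it constantly in the $\Sigma$-direction over the tube and gluing with the trivial homotopy on $L \setminus (\Sigma \times [-1,1])$ along $\Sigma \times \{\pm 1\}$; the endpoint condition on the unitary family ensures the two pieces match, yielding the desired stable triviality of $\gamma(\Sigma,v)$.
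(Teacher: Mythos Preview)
Your reduction to the tube is correct, but the key claim is false: the loop $t \mapsto \ell_t \oplus \R$ in $\Lambda_2$ is \emph{not} null-homotopic rel endpoints. You correctly note that $\ell_t$ represents a generator of $\pi_1\Lambda_1 \cong \Z$, but the stabilization map $\pi_1\Lambda_1 \to \pi_1\Lambda_2$ is an \emph{isomorphism} (both are $\Z$, detected by $\det^2$), so $\ell_t \oplus \R$ still represents a generator of $\pi_1\Lambda_2$. No family $A_{s,t} \in U(2)$ with $A_{s,\pm 1} = \mathrm{Id}$ can carry $\ell_t \oplus \R$ to the constant loop $\nu_t \oplus \R$, because the two loops have different Maslov indices. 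In particular, a homotopy that is stationary on $L \setminus (\Sigma \times [-1,1])$ cannot exist.

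What the paper does instead is not to kill the rotation but to \emph{transfer} it: one homotopes $\ell_t \oplus \R$ rel endpoints to $\nu_t \oplus \hat\ell_t$, where $\hat\ell_t$ is the same half-rotation but now taking place in the stabilization factor $\C$. These two loops \emph{do} have the same Maslov index, so this homotopy exists (and the paper exhibits it via two paths in $U_2$ with equal $\det^2$). Globally this yields $\gamma(\Sigma,v)\oplus\R \simeq \nu \oplus \hat\ell$, where $\hat\ell$ is a line field in the trivial bundle $L \times \C$, i.e.\ a map $L \to \Lambda_1 \simeq S^1$. The final step uses that $L$ is a homotopy sphere of dimension $n>1$, so this map is null-homotopic --- but the null-homotopy is global on $L$, not supported in the tube. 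This last point is essential: the lemma is actually false for general $L$ (take $L = T^2$ and $\Sigma$ a non-separating circle; the Maslov class of $\gamma(\Sigma,v)$ is Poincar\'e dual to $[\Sigma]\neq 0$ and survives stabilization), so any correct argument must use something about $L$ beyond the local structure of the tube.
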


\begin{proof}
Consider the path $p_1:[-1,1] \to U_2$ given by
\[. t \mapsto
 \begin{pmatrix}
e^{ \pi i (t+1) /2} &0 \\
0 & 1 
\end{pmatrix} \]
and the path $p_2:[-,1,1] \to U_2$  given by
\[. t \mapsto
 \begin{pmatrix}
1 &0 \\
0 & e^{ \pi i (t+1) /2}
\end{pmatrix} \]

Post-composing $p_1$ and $p_2$ with the projection $U_2 \to \Lambda_2$ (i.e. taking the images $p_i(t)(\R^2) \subset \C^2$) we obtain loops $\eta_1,\eta_2: ([-1,1],\{ \pm 1 \}) \to (\Lambda_2, \R^2)$, i.e. $\eta_i(-1)=\eta_i(1)=\R^2$ for $i=1,2$. Since the isomorphism $\pi_1 \Lambda_2 \simeq \Z$ is induced by $\text{det}^2:\Lambda_2 \to U_1$ and $\det^2 \circ \eta_1 = \det^2 \circ \eta_2$ (both are equal to the function $e^{\pi i (t+1)})$ it follows that $\eta_1$ and $\eta_2$ are homotopic relative to $\{ \pm 1\}$, as can be verified explicitly.

At a point $x \in \Sigma \times [-1,1]$ we may split $T_xL \oplus \C  \simeq T_x\Sigma \oplus \C^2$. From the above observation it follows that $\gamma(\Sigma, v) \oplus \R$ is homotopic to the distribution $\nu \oplus \wh \ell$, where $\wh \ell$ denotes the line field in $S^n \times \C$ defined as $i \R$ outside of $\Sigma \times [-1,1]$ and for $(x,t) \in \Sigma \times [-1,1]$ given by
\[\wh \ell = \text{span}\big\langle\sin\left( \frac{\pi t}{ 2}\right) \frac{\p }{\p x} + \cos\left(\frac{ \pi t }{2}\right)\frac{\p}{\p y} \big\rangle  \subset \C. \]

But every map $S^n \to \Lambda_1$ is null-homotopic when $n>1$ since $\Lambda_1 \simeq S^1$. Hence $\wh \ell$ is homotopic to the trivial distribution $S^n \times \R \subset S^n \times \C$ and consequently $\gamma(\Sigma,v) \oplus \R$ is homotopic to $\nu \oplus \R$, which was to be proved. \end{proof}

\subsection{Reduction to homotopy theory}

\subsubsection{Formal folds in $\R^n$}

Let $(\Sigma,v)$ be a formal fold in $\R^n$. We assume $\Sigma \subset \R^n$ to be compact, hence the corresponding Lagrangian distribution $\gamma(\Sigma,v) \subset T(T^*\R^n)|_{\R^n}$ is vertical at infinity. In other words, $\gamma(\Sigma,v)$ is equal to the vertical distribution $\nu=\ker(d \pi)$ outside of a compact subset, where $\pi:T^*\R^n \to \R^n$ is the standard projection. 

Since $T(T^*\R^n)|_{\R^n} \simeq \R^n \times \C^n$ as symplectic vector bundles, there is a one to one correspondence between homotopy classes of Lagrangian distributions in $T(T^*\R^n)|_{\R^n}$ which are vertical at infinity and elements of $\pi_n \Lambda_n$, where $\Lambda_n $ is the Grassmannian of linear Lagrangian subspaces of $\C^n$. Thus to a formal fold $(\Sigma,v)$ in $\R^n$ is associated an element $\alpha(\Sigma,v) \in \pi_n \Lambda_n$. Here we think of the $n$-sphere as the one-point compactification of $\R^n$ with the basepoint at infinity and we take the (vertical) imaginary plane $i \R^n$ as the basepoint of $\Lambda_n$.

By Lemma \ref{lem: folds are stably trivial}, every element of the form $\alpha(\Sigma,v)$ is in the kernel of the stabilization map $\pi_n \Lambda_n \to \pi_n \Lambda_{n+1}$ induced by the inclusion $\Lambda_n \hookrightarrow \Lambda_{n+1}$, which we recall is given by $$P \mapsto P \oplus \R \subset \C^n \oplus \C, \qquad P \subset \C^n.$$

In Sections \ref{sec: compute} and \ref{sec: excep} below we will prove that the converse is also true:

\begin{theorem}\label{thm: main} Every element of $\ker( \pi_n \Lambda_n \to \pi_n \Lambda_{n+1})$ admits a representative of the form $\alpha(\Sigma,v)$ for some formal fold $(\Sigma,v)$ in $\R^n$.\end{theorem}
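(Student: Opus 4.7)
The plan is to reduce the theorem to the construction of explicit formal folds realizing generators of the kernel of stabilization. Disjoint union of formal folds is additive under $\alpha$: two disjoint folds in $\R^n$ produce a Lagrangian distribution that is vertical outside two disjoint tubular neighborhoods, so the corresponding pointed map $S^n \to \Lambda_n$ splits as a wedge-sum whose class in $\pi_n \Lambda_n$ is the sum of the two summands. Since $\ker(\pi_n \Lambda_n \to \pi_n \Lambda_{n+1})$ is cyclic, it therefore suffices to exhibit a single formal fold $(\Sigma_n, v_n)$ realizing a generator in each dimension.

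My first candidate would be the round sphere fold $\Sigma = S^{n-1}\subset\R^n$ with outward co-orientation. Collapsing the complement of the tubular neighborhood $S^{n-1}\times[-1,1]$ to the basepoint exhibits the associated map $S^n \to \Lambda_n$ as the suspension of an adjoint $S^{n-1} \to \Omega\Lambda_n$, which sends each $x \in S^{n-1}$ to the Maslov-$1$ loop that sweeps a half-turn in the complex normal line $\C\cdot x$. The long exact sequence of $O_n \to U_n \to \Lambda_n$ and its connecting map $\partial : \pi_n\Lambda_n \to \pi_{n-1}O_n$ should identify $\alpha(S^{n-1}, \mathrm{out})$ with the clutching function of $TS^n$, that is, with the image of $1\in\pi_n S^n$ under the boundary map of the fibration $O_n \to O_{n+1} \to S^n$.

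For $n\notin\{3,7\}$, the non-parallelizability of $S^n$ (a consequence of Adams's Hopf invariant one theorem) implies that this clutching function is a nontrivial element of $\ker(\pi_{n-1}O_n \to \pi_{n-1}O_{n+1})$, and in fact generates that kernel since $\pi_{n-1}S^n = 0$. Combined with the metastable computation of $\pi_n\Lambda_n$ due to Kitchloo \cite{K78} and the vanishing of $\pi_n U_n$ in even dimensions (respectively a careful tracking of its generator in odd dimensions), this should pin down $\alpha(S^{n-1},\mathrm{out})$ as a generator of $\ker(\pi_n\Lambda_n \to \pi_n\Lambda_{n+1})$. This is the strategy I expect Section \ref{sec: compute} to execute.

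The main obstacle, and the content of Section \ref{sec: excep}, is the exceptional case $n = 3,7$. Here $S^n$ is parallelizable, the clutching of $TS^n$ is trivial, and the sphere fold drops into the image of $\pi_n U_n \to \pi_n\Lambda_n$ rather than producing a generator of the stabilization kernel; for instance the order-two kernel of $\pi_3\Lambda_3 \simeq \Z/4 \to \pi_3\Lambda_4$ must be reached via the image of $\pi_3 U_3$ rather than via $\partial$. I would therefore replace the round sphere by a formal fold whose normal geometry is twisted by unit quaternion (respectively, octonion) multiplication, exploiting the classical fact that these maps represent classes in $\pi_n O_{n+1}$ that stabilize to generators of $\pi_n O$. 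The technical heart is the verification, via direct computation in the quaternionic or octonionic algebra and bookkeeping through the long exact sequence, that the resulting loop of Lagrangian planes in $\C^n$ realizes the desired generator of $\ker(\pi_n\Lambda_n \to \pi_n\Lambda_{n+1})$. For $n=3$, the alternative approach via Entov's surgery of $\Sigma^{11}$-singularities \cite{En97} sketched in the introduction would provide an independent check.
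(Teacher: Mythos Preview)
Your approach for $n \notin \{1,3,7\}$ matches the paper's: identify $\partial\alpha(S^{n-1}, n_{D^n}) \in \pi_{n-1}O_n$ with the clutching function of $TS^n$, which generates $\ker(\pi_{n-1}O_n \to \pi_{n-1}O_{n+1})$, and combine with Kachi's computation (the reference \cite{K78} is Kachi, not Kitchloo) to deduce that the restricted map $\ker(\pi_n\Lambda_n \to \pi_n\Lambda_{n+1}) \to \ker(\pi_{n-1}O_n \to \pi_{n-1}O_{n+1})$ is an isomorphism. You are vague on the injectivity of this map, which is the content of Lemma~\ref{lem: key lem} and requires a genuine case-by-case analysis of $n \bmod 8$ through the long exact sequences; this is where the ``careful tracking'' you allude to actually lives, and it is not automatic from knowing the groups alone.

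Your treatment of $n=3,7$ contains a real misconception. You write that the sphere fold ``drops into the image of $\pi_n U_n$ rather than producing a generator of the stabilization kernel'' and propose to replace it by a formal fold ``twisted by unit quaternion multiplication''. But a formal fold is nothing more than a co-oriented hypersurface $(\Sigma,v)$; there is no additional structure to twist, and any two formal folds with $\Sigma$ a sphere are ambiently isotopic up to orientation. In fact the paper shows (Lemmas~\ref{lemma: folds} and~\ref{lemma: folds redux}) that the \emph{same} sphere fold $\alpha(S^{n-1}, n_{D^n})$ generates the stabilization kernel in these dimensions too. The difficulty is only that, since $\pi_{n-1}O_n = 0$, one cannot detect its nontriviality via $\partial$; instead one must lift it to $\pi_n U_n$ and compute there. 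The quaternions and octonions enter in constructing this lift, not a new fold: one builds two complex trivializations $F,G$ of $T(T^*S^n)|_{S^n}$, the first by an explicit formula whose stabilization to $\pi_n U_{n+1}$ is trivial (Lemma~\ref{lemma:stabilize}), the second by left multiplication by unit quaternions or octonions. Then $\beta = iF \circ G^{-1} \in \pi_n U_n$ maps to $\alpha(S^{n-1}, n_{D^n})$ in $\pi_n\Lambda_n$ on the one hand, and its stabilization in $\pi_n U_{n+1}$ equals the image of the multiplication class $\eta \in \pi_n O_{n+1}$ on the other. Your instinct that the classical fact ``$\eta$ stabilizes to a generator of $\pi_n O$'' is the key input is correct, but it is used to compute the sphere fold, not to replace it.
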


In the rest of the present section we will show how Theorem \ref{thm: main}, together with the h-principle \ref{thm:h-p}, imply our main result Theorem \ref{thm:intro}.

\subsubsection{Formal folds in homotopy spheres}

Let $L$ be an $n$-dimensional homotopy sphere and denote by $\C^n \to E_L \to L$ the symplectic vector bundle $T(T^*L)|_L$. Let $\Lambda_n \to \Lambda_L \to L$ denote the associated Grassmann bundle, whose fibre over $x \in L$ is the Grassmannian of linear Lagrangian subspaces of $T_x(T^*L)$. Let $f:D^n \to L$ be a smooth embedding of the closed unit disk $D^n = \{ \|x\| \leq 1 , \, \, x \in \R^n \}$, which is unique up to isotopy. 

After identifying the interior $B^n=\{ \| x \|<1, \, \, x \in \R^n\}$ of $D^n$ with $\R^n$, the embedding $f$ induces a map
\[ f_*:\pi_n \Lambda_n \to \pi_0 \Gamma (\Lambda_L), \]
where $\Gamma(\Lambda_L)$ denotes the space of sections of $\Lambda_L$. This is induced from a pushforward at the level of spaces, i.e. from the space of maps $(D^n, \p D^n) \to (\Lambda_n, i\R^n)$ to the section space $\Gamma(\Lambda_L)$. Explicitly, a Lagrangian distribution in $T(T^*D^n)|_{D^n}$ which is vertical near $\p D^n$ is extended to $T(T^*L)|_L$ as the vertical distribution outside of $f(D^n)$. Note that at the level of spaces the pushforward takes formal folds to formal folds, see Figure \ref{implant}. 

\begin{figure}[h]
\includegraphics[scale=0.65]{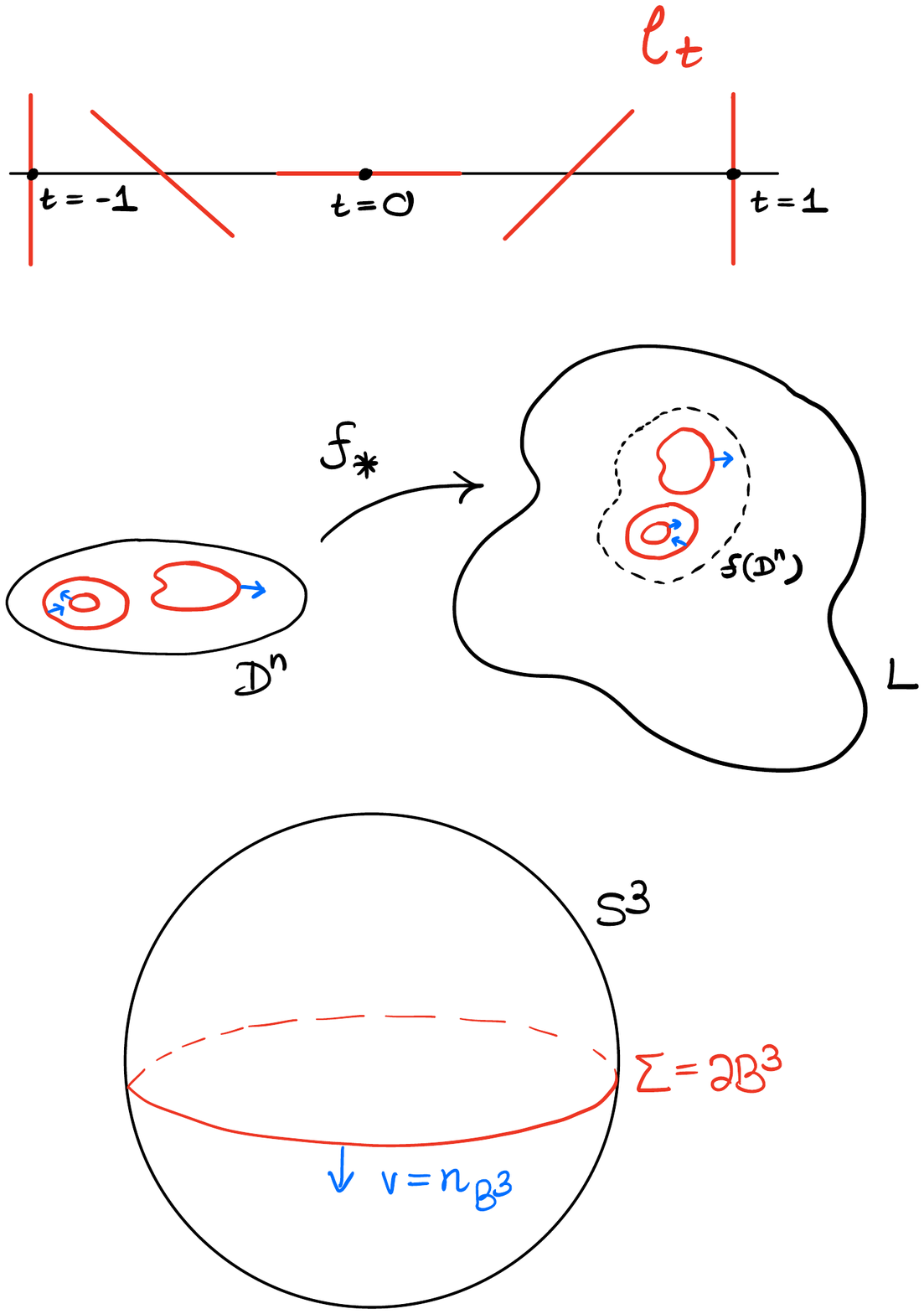}
\caption{Pushing forward a formal fold from $D^n$ to $L$ via $f$.}
\label{implant}
\end{figure}

\begin{lemma} $f_* \pi_n \Lambda_n = \pi_0 \Gamma(\Lambda_L)$.
\end{lemma}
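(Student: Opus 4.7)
The plan is to establish surjectivity of $f_*$ by showing that every section $s \in \Gamma(\Lambda_L)$ is homotopic, through sections, to one that coincides with the vertical section $\nu$ on the complement $B := L \setminus f(B^n)$. Once this is arranged, a fixed symplectic trivialization of $E_L|_{f(D^n)}$ sending $\nu|_{f(D^n)}$ to the constant Lagrangian $i\R^n \subset \C^n$ turns $s|_{f(D^n)}$ into a map $(D^n,\partial D^n) \to (\Lambda_n, i\R^n)$, i.e.\ an element of $\pi_n \Lambda_n$ whose image under $f_*$ is $[s]$. Such a trivialization exists because $f(D^n)$ is contractible and any Lagrangian subbundle splits off: pick a Lagrangian complement to $\nu|_{f(D^n)}$, trivialize both summands, and use the symplectic pairing.

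The key geometric input is that $B$ is contractible. Excision gives $H_*(L,B) \cong H_*(D^n, S^{n-1})$, and the long exact sequence of the pair together with $L$ having the homology of $S^n$ yields $\tilde H_*(B) = 0$. For $n \geq 3$, $B$ is simply connected by van Kampen applied to $L = f(D^n) \cup B$ with simply connected intersection $f(S^{n-1})$; for $n = 1,2$ a homotopy $n$-sphere is $S^n$ and $B$ is manifestly a disk or arc. Whitehead's theorem now gives $B \simeq *$.

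Since $B$ is contractible, the bundle $\Lambda_L|_B$ is trivializable, so its section space is homotopy equivalent to $\mathrm{Maps}(B, \Lambda_n) \simeq \Lambda_n$, and in particular $\pi_0\Gamma(\Lambda_L|_B) = \pi_0(\Lambda_n) = 0$. Therefore $s|_B$ is homotopic through sections to $\nu|_B$. The inclusion $B \hookrightarrow L$ is a cofibration (it has a collar neighborhood of its boundary in $L$), so this homotopy lifts, via the homotopy extension property applied to the fiber bundle $\Lambda_L$, to a homotopy of $s$ through global sections. After this homotopy we may assume $s \equiv \nu$ on $B$, and the chosen trivialization then produces the required element of $\pi_n\Lambda_n$ mapping to $[s]$ under $f_*$.

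The main obstacle is really just the contractibility of $B$, which relies essentially on $L$ being a homotopy sphere; everything else is formal manipulation with fiber bundles, the homotopy extension property, and connectedness of $\Lambda_n$.
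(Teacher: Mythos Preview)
Your proof is correct and follows essentially the same approach as the paper: both arguments hinge on the contractibility of $L \setminus f(B^n)$ to homotope an arbitrary section to one that is vertical on that complement, which the paper simply asserts in one line while you spell out via excision, van Kampen, and Whitehead. A slightly quicker route to contractibility is to observe that $L \setminus f(B^n)$ is a deformation retract of $L \setminus \{ \text{pt} \}$, which is contractible since $L$ is a homotopy sphere.
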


\begin{proof}
Any  Lagrangian distribution $\gamma \subset T(T^*L)|_L$  may be deformed so that it is equal to the vertical distribution on a neighborhood $U$ of $L \setminus f(B^n)$ since $L \setminus f(B^n)$ is contractible. 
\end{proof}

Denote by $\Gamma^{\text{st}}(\Lambda_L) \subset \Gamma(\Lambda_L)$ the subspace of stably trivial sections. It is clear that $f_* \ker( \pi_n \Lambda_n \to \pi_n \Lambda_{n+1}) \subset \pi_0 \Gamma^{\text{st}}(\Lambda_L)$. Again we have surjectivity:

\begin{lemma}\label{lem:surj of model} $f_* \ker( \pi_n \Lambda_n \to \pi_n \Lambda_{n+1}) =\pi_0 \Gamma^{\text{st}}(\Lambda_L)$. \end {lemma}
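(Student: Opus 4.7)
The inclusion $\subset$ having been noted above, we focus on $\supset$. The plan is to set up the commutative square
\[
\begin{CD}
\pi_n\Lambda_n @>{\sigma}>> \pi_n\Lambda_{n+1}\\
@V{f_*}VV @VV{\tilde f_*}V\\
\pi_0\Gamma(\Lambda_L) @>>{\Sigma}> \pi_0\Gamma(\Lambda_L^{\textup{st}})
\end{CD}
\]
in which $\sigma$ and its section-level analogue $\Sigma$ are induced by the bundle map $P \mapsto P \oplus \R$, and $\tilde f_*$ is the pushforward on the stabilized Grassmann bundle. Given $\gamma \in \Gamma^{\textup{st}}(\Lambda_L)$, by the preceding lemma we may write $\gamma = f_*(\alpha)$ for some $\alpha \in \pi_n\Lambda_n$, and the stable triviality of $\gamma$ translates through the diagram into the equation $\tilde f_*(\sigma(\alpha)) = \tilde f_*(0)$. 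The argument then reduces to showing that $\tilde f_*$ is injective.

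To trivialize the stabilized bundle I would invoke stable parallelizability: since $L$ is a homotopy sphere, $TL \oplus \R$ is trivial by Kervaire--Milnor, and combined with the Hermitian isomorphism $T(T^*L)|_L \simeq TL \otimes_\R \C$, this yields a trivialization $T(T^*L)|_L \oplus \C \simeq L \times \C^{n+1}$. I would arrange it so that $\nu \oplus \R$ is identified with the constant Lagrangian subbundle of fibre $i\R^n \oplus \R$, which is possible because $\nu \oplus \R \simeq T^*L \oplus \R$ is itself trivial as a real bundle. Under this identification, sections of $\Lambda_L^{\textup{st}}$ become maps $L \to \Lambda_{n+1}$, and $\tilde f_*(\beta)$ corresponds to the map equal to the basepoint outside $f(B^n)$ and to a representative of $\beta$ on $f(D^n)$.

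Such a map factors through the quotient $L/(L \setminus f(B^n)) \cong f(D^n)/f(\partial D^n) \cong S^n$, and since $L$ is a homotopy sphere of dimension $n \geq 2$, the complement $L \setminus f(B^n)$ is contractible: its reduced homology vanishes by excision from that of $L$, and it is simply connected. Hence the quotient map $L \to S^n$ is a homotopy equivalence, the induced map $[S^n, \Lambda_{n+1}] \to [L, \Lambda_{n+1}]$ is a bijection, and because the $\pi_1$-orbit of the zero element of $\pi_n\Lambda_{n+1}$ is trivial, freely null-homotopic implies based null-homotopic. This yields the injectivity of $\tilde f_*$ and completes the proof.

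The step I expect to be the main obstacle is the compatible choice of trivialization: one must simultaneously arrange that $\nu \oplus \R$ becomes a constant Lagrangian \emph{and} that the factored map $S^n \to \Lambda_{n+1}$ genuinely represents $\sigma(\alpha)$, rather than being twisted by some clutching function relating the global trivialization to the one used implicitly in the definition of $f_*$. The flexibility to arrange this comes from the contractibility of $L \setminus f(B^n)$, which lets us extend a chosen trivialization on the complement across the disk $f(D^n)$ without altering its homotopy class there.
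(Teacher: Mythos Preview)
Your approach is genuinely different from the paper's. The paper works directly with a given stably trivial $\gamma$ that has already been made vertical on a contractible neighbourhood $U$ of $L\setminus f(B^n)$: the stabilized homotopy $\gamma\oplus\R\simeq\nu\oplus\R$ restricts at a basepoint $x_0\in U$ to a loop in $\Lambda_{n+1}$, and the key step is that $\pi_1\Lambda_n\to\pi_1\Lambda_{n+1}$ is an isomorphism, so this loop de-stabilizes to a loop in $\Lambda_n$ which can be used to modify $\gamma$ (without changing its class) so that the stabilized homotopy becomes constant at $x_0$, and then over all of $U$ by contractibility. No global trivialization of the stabilized bundle is ever invoked. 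Your route via the commutative square and injectivity of $\tilde f_*$ is more conceptual and arguably cleaner once the compatibility issue is settled, while the paper's argument is more hands-on but avoids that issue entirely.

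Your identified obstacle is real, and the resolution you suggest does not quite work as stated: extending the local trivialization of the pair $(E_L\oplus\C,\nu\oplus\R)$ from $f(D^n)$ across the contractible complement faces an obstruction in $\pi_{n-1}O_{n+1}$, which is stable and can be nonzero. However, the argument is salvageable without arranging compatibility. Since both the global and local trivializations send $\nu\oplus\R$ to the basepoint, their difference on $f(D^n)$ is a map $g\colon D^n\to O_{n+1}$ (the stabilizer), and the factored map $S^n\to\Lambda_{n+1}$ represents $g\cdot\sigma(\alpha)$ rather than $\sigma(\alpha)$. But $D^n$ is contractible, so $g$ is homotopic in $O_{n+1}$ to a constant $c$, and $c$ acts on $\Lambda_{n+1}$ by a based self-homeomorphism; hence $g\cdot\sigma(\alpha)=0$ in $\pi_n\Lambda_{n+1}$ forces $\sigma(\alpha)=0$. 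With this observation your proof goes through.
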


\begin{proof} If a Lagrangian distribution $\gamma \subset T(T^*L)|_L$ which is vertical in a neighborhood $U$ of $L \setminus f(B)$ is stably trivial, then $\gamma \oplus \R$ and $\nu \oplus \R$ are homotopic in $T(T^*L)|_L \oplus \C$, but the homotopy need not be fixed in $U$. So we need to fix this.

We may assume that $U$ itself is contractible, for example we can set $U=L \setminus f( \frac{1}{2}B^n)$ for $\frac{1}{2}B^n=\{ \|x\|<1/2, \,\ \, x \in \R^n\}$. Let $x_0 \in U$ be a point outside of $f(D^n)$. The restriction of the homotopy between $\gamma \oplus \R$ and $\nu \oplus \R$ to $x_0$ determines an element of $\pi_1 \Lambda_{n+1}$. Now, $\pi_1 \Lambda_n \to \pi_1 \Lambda_{n+1}$ is an isomorphism for any $n \geq 1$, hence after a suitable deformation of $\gamma$ we may assume that this homotopy is through Lagrangian planes of the form $\gamma_t(x_0)  \oplus \R$, where $\gamma_t(x_0) \subset T_{x_0}(T^*L)|_L$. 

We may then use the homotopy $\gamma_t(x_0)$ to further deform $\gamma$ so that it is equal to the vertical distribution $\nu$ at the point $x_0$ and so that $\gamma \oplus \R$ is homotopic to $\nu \oplus \R$ through distributions which are equal to $\nu \oplus \R$ at the point $x_0$. Explicitly, trivialize a neighborhood $V \simeq \R^n$ of $x_0$ contained in $U$, first deform $\gamma(x)$ so that it is constant and equal to $\gamma(x_0)$ in that neighborhood, then replace it with $\gamma_{\phi(x)} (x_0)\subset \C^n$ where $\phi:\R^n \to [0,1]$ is a compactly supported function such that $\phi(x_0)=1$.

Finally, since $U$ is contractible we may further deform $\gamma$ so that the same property holds over all of $U$, i.e. $ \gamma$ is vertical over $U$ and $ \gamma \oplus \R$ is homotopic to $\nu \oplus \R$ through distributions which are equal to $\nu \oplus \R$ over $U$. This proves the lemma.
\end{proof}

We are now ready to prove our main result.

\begin{proof}[Proof of Theorem \ref{thm:intro} assuming Theorem \ref{thm: main}] The necessity of stable triviality follows from Lemma \ref{lem: folds are stably trivial}. The sufficiency of stable triviality follows from Lemma \ref{lem:surj of model} and Theorem \ref{thm: main}, which show that the hypothesis needed to apply the h-principle Theorem \ref{thm:h-p} is satisfied. 
\end{proof}

It therefore remains for us to establish Theorem \ref{thm: main}. This will be achieved in Section \ref{sec: compute} for those dimensions not equal to 3 or 7 and in Section \ref{sec: excep} for the exceptional dimensions 3 and 7.

  \section{Homotopical computation}\label{sec: compute}

\subsection{Homotopical background}\label{sec:tables}

We begin by reviewing some relevant background in homotopy theory, in particular we review for future reference certain stable and nonstable homotopy groups of the unitary and orthogonal groups and of their homogeneous quotient, the Lagrangian Grassmannian. 

\subsubsection{The classical groups}\label{sec: review}
Recall that to a Serre fibration $F\to E\to B$ is associated a long exact sequence in homotopy groups:
\[\cdots\to\pi_{n+1}B\to\pi_n F\to\pi_n E\to\pi_n B\to\pi_{n-1}F\to\cdots.\]

From the fibration $U_n \to U_{n+1} \to S^{2n+1}$ given by the standard action of $U_{n+1}$ on $S^{2n+1}$ one deduces that the stabilization map $U_n \to U_{n+1}$, which is given by adding a row and a column with zeros everywhere except for a 1 in the diagonal entry, induces isomorphisms on all $\pi_k$ for $k<2n$ and an epimorphism on $\pi_{2n}$. Indeed, $\pi_k S^{2n+1}=0$ for $k<2n+1$. The homotopy groups $\pi_k U := \pi_k U_n$ in the stable range $k<2n$ exhibit 2-fold periodicity and were computed by Bott \cite{B59} as follows:

\begin{center}
	\begin{tabular}{ c | c }
		$\mod(k,2)$ & $\pi_{k}U$\\\hline
		0 & $0$ \\ 
		1 & $\Z$\\   
	\end{tabular}%
	\end{center}

Similarly, from the fibration $O_n \to O_{n+1} \to S^n$ given by the standard action of $O_{n+1}$ on $S^n$ one deduces that the analogous stabilization map $O_n \to O_{n+1}$ induces isomorphisms on all $\pi_k$ for $k<n-1$ and an epimorphism on $\pi_{n-1}$. The homotopy groups $\pi_k O := \pi_k O_n$ in the stable range $k<n-1$ exhibit 8-fold periodicity and were also computed by Bott as follows:

\begin{center}
	\begin{tabular}{ c | c }
		$\mod(k,8)$ & $\pi_{k}O$\\\hline
		0 & $\Z/2$ \\ 
		1 & $\Z/2$\\   
		2 & $0$\\ 
		3 & $\Z$\\ 
		4 & $0$\\ 
		5 & $0$\\ 
		6 & $0$\\ 
		7 & $\Z$\\ 
	\end{tabular}%
	\end{center}

The Lagrangian Grassmannian $\Lambda_n$ admits a transitive action of $U_n$ with the stabilizer $O_n$, hence can be described as the homogeneous space $U_n/O_n$. By considering the long exact sequence in homotopy associated to the resulting fibration $O_n \to U_n \to \Lambda_n$, it follows from the above that the stabilization map $\Lambda_n \to \Lambda_{n+1}$, which is given by taking the direct sum in $\C^{n+1}=\C^n \oplus \C$ of a linear Lagrangian subspace of $\C^n$ and $\R \subset \C$, induces isomorphisms on all $\pi_k$ for $k<n$ and an epimorphism on $\pi_n$. 

The homotopy groups $\pi_k \Lambda := \pi_k \Lambda_n$ in the stable range $k<n$ exhibit 8-fold periodicity and were also computed by Bott, in fact they are just a shift of the stable homotopy groups $\pi_k O$ due to the homotopy equivalence $\Omega \Lambda \simeq \Z \times BO$. 

\begin{center}
	\begin{tabular}{ c | c }
		$\mod(k,8)$ & $\pi_{k}\Lambda$\\\hline
		0 & $0$ \\ 
		1 & $\Z$ \\ 
		2 & $\Z/2$ \\ 
		3 & $\Z/2$\\   
		4 & $0$\\ 
		5 & $\Z$\\ 
		6 & $0$\\ 
		7 & $0$\\ 
	\end{tabular}%
	\end{center}

For the purposes of this article we are interested not in the stable homotopy groups of $\Lambda_n$ but in the unstable group $\pi_n \Lambda_n$. Via the long exact sequence in homotopy of the fibration $O_n \to U_n \to \Lambda_n$ we may relate this group to the homotopy groups $\pi_n U_n$ and $\pi_{n-1}O_n$, the first of which is in the stable range but the second of which is not. The groups $\pi_n \Lambda_n$ and $\pi_{n-1}O_n$ are the first nonstable homotopy groups of $\Lambda_n$ and $O_n$ respectively.

These homotopy groups, though nonstable, are also understood. Not only do they surject onto the corresponding stable groups, but they exhibit a secondary form of 8-fold of periodicity, with three exceptions related to the parallelizability of $S^1$, $S^3$ and $S^7$. 

The computation of $\pi_{n-1}O_n$ is mostly straightforward, see \cite{S51}, but the non-parallelizability of $S^n$ for $n \neq 1,3,7$ \cite{BM58, K58, M58} plays an essential role. Here is the table for $\pi_{n-1}O_n$, where we remark that the indexing of $\pi_*$ is by $n-1$ instead of $n$ for future convenience when analyzing the sequence $\pi_n U_n \to \pi_n \Lambda_n \to \pi_{n-1}O_n$.

\begin{center}
	\begin{tabular}{ c | c }
		$\mod(n,8), \, \, n \geq 8$ & $\pi_{n-1}O_n$\\\hline
		0 & $\Z\oplus\Z$ \\ 
		1 & $\Z/2\oplus\Z/2$\\  
		2 & $\Z/2 \oplus\Z$\\
		3 & $\Z/2$ \\ 
		4 & $\Z\oplus\Z$ \\  
		5 & $\Z/2$ \\
		6 & $\Z$ \\ 
		7 & $\Z/2$ \\
	\end{tabular}%
\hspace{1in}
	\begin{tabular}{ l }
		 (small $n$)\\\hline
		$\pi_0O_1 = \Z/2$ \\ 
		$\pi_1O_2 = \Z$\\  
		$\pi_2O_3 = 0$\\
		$\pi_3O_4 = \Z\oplus\Z$\\ 
		$\pi_4O_5 = \Z/2$ \\  
		$\pi_5O_6 = \Z$\\
		$\pi_6O_7 = 0$ \\ 
		$\pi_7 O_8 = \Z \oplus \Z$
	\end{tabular}
\end{center} 

Briefly, to relate this table with that of the stable groups $\pi_{n-1}O$ one uses the fact that $\pi_{n-1}O_n \to \pi_{n-1}O$ is an epimorphism and $\ker(\pi_{n-1}O_n \to \pi_{n-1}O)= \text{im}(\pi_{n-1}S^{n-1} \to \pi_{n-1}O_n)$ is generated by the class of the tangent bundle $TS^{n}$, which has infinite order if $n$ is even, has order $2$ if $n$ is odd and not equal to $1,3,7$, and is trivial if $n=1,3$ or $7$.

The groups $\pi_{n}\Lambda_n$ were computed by Kachi in \cite{K78} and are given as follows:

\begin{center}
	\begin{tabular}{ c | c }
		$\mod(n,8), \, \, n \geq 8$ & $\pi_{n}\Lambda_n$\\\hline
		0 & $\Z$ \\ 
		1 & $\Z\oplus\Z/2$\\  
		2 & $\Z/2 \oplus\Z$\\
		3 & $\Z/2\oplus\Z/2$ \\ 
		4 & $\Z$ \\  
		5 & $\Z\oplus\Z/2$ \\
		6 & $\Z$ \\ 
		7 & $\Z/2$ \\
	\end{tabular}%
	\hspace{1in}
	\begin{tabular}{ l }
		(small $n$)\\\hline
		$\pi_0\Lambda_0 = 0$ \\ 
		$\pi_1\Lambda_1 = \Z$\\  
		$\pi_2\Lambda_2 = \Z$\\
		$\pi_3\Lambda_3 = \Z/4$\\ 
		$\pi_4\Lambda_4 = \Z$ \\  
		$\pi_5\Lambda_5 = \Z\oplus\Z/2$\\
		$\pi_6\Lambda_6 = \Z$ \\ 
		$ \pi_7\Lambda_7=\Z/2$ \\ 
	\end{tabular}
	\hspace{1in}
\end{center}

\begin{remark}
Strictly speaking the computation in \cite{K78} is for $\pi_n SU_n/SO_n$, however this group is isomorphic to $\pi_n U_n/O_n$ whenever $n >1$. This follows immediately from the long exact sequences in homotopy associated to the determinant fibrations $SU_n \to U_n \to U_1$ and $SO_n \to O_n \to O_1$.\end{remark}

Finally, $\ker(\pi_n \Lambda_n \to \pi_n \Lambda_{n+1})$ is given as follows:

\begin{center}
	\begin{tabular}{ c | c }
		$\mod(n,2), \, \, n > 1$ & $\ker(\pi_{n}\Lambda_n \to \pi_n \Lambda_{n+1})$\\\hline
		0 & $\Z$ \\ 
		1 & $\Z/2$\\  
	\end{tabular}%
	\end{center}
	
This table follows almost immediately from the previous ones and in any case is a consequence of the computation below. In almost all cases the subgroup $\ker(\pi_n \Lambda_n \to \pi_n \Lambda_{n+1})$ is a direct summand of $\pi_n \Lambda_n$ (with the other direct summand given by $\pi_n \Lambda_{n+1} \simeq \pi_n \Lambda$), but there are some exceptions in which it is given by:
\begin{itemize}
\item[($n$=1)] The trivial subgroup.
\item[($n$=2)] The index 2 subgroup $2\Z \subset \Z \simeq \pi_2\Lambda_2$.
\item[($n$=3)]The cyclic subgroup of order 2 in $\pi_3\Lambda_3 \simeq \Z/4$.
\end{itemize}

\begin{remark} Note that in all cases $\ker(\pi_n \Lambda_n \to \pi_n \Lambda_{n+1}) \subset \pi_n\Lambda_n$ is cyclic and we will give an explicit generator. \end{remark}

\subsection{A homotopical lemma}

The following lemma is the main homotopical input needed to prove our main theorem in the non-exceptional dimensions $n \neq 1,3,7$.

\begin{lemma}\label{lem: key lem}
Let $n \neq 1,3,7$. Then $\ker( \pi_n \Lambda_n \to \pi_{n-1} O_n ) \cap \ker( \pi_n \Lambda_n \to \pi_n \Lambda_{n+1}) =0$.
\end{lemma}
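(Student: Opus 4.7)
My plan is to run a diagram chase involving the long exact sequences of $O_k \to U_k \to \Lambda_k$ for $k = n, n+1$ and of $O_n \to O_{n+1} \to S^n$. The even case is immediate: by Bott periodicity $\pi_n U_n \simeq \pi_n U = 0$, hence $\ker(\pi_n \Lambda_n \to \pi_{n-1}O_n) = \mathrm{image}(\pi_n U_n \to \pi_n \Lambda_n) = 0$. Suppose now $n$ is odd with $n \neq 1, 3, 7$, and let $\alpha$ lie in both kernels. Exactness yields $\beta \in \pi_n U_n$ with image $\alpha$, and the goal becomes to show $\beta$ lies in the image of $\pi_n O_n \to \pi_n U_n$. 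I would use two ingredients: (a) the stabilization $\pi_n U_n \to \pi_n U_{n+1}$ is an isomorphism, coming from the long exact sequence of $U_n \to U_{n+1} \to S^{2n+1}$ and the vanishing of $\pi_n S^{2n+1}$ and $\pi_{n+1} S^{2n+1}$; and (b) the class $\tau = [TS^{n+1}] \in \pi_n O_{n+1}$ maps to $\chi(S^{n+1}) = 2$ in $\pi_n S^n$ (as $n+1$ is even) and to $0$ in $\pi_n U_{n+1}$, since $TS^{n+1} \oplus \R \simeq \R^{n+2}$ is stably trivial as a real bundle, forcing its complexification to be stably trivial, which in the stable group $\pi_n U_{n+1}$ means trivial.

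The chase then runs as follows. Let $\beta' \in \pi_n U_{n+1}$ be the stabilization of $\beta$; by hypothesis $\beta'$ maps to zero in $\pi_n \Lambda_{n+1}$, so by exactness $\beta' = \phi(\gamma)$ for some $\gamma \in \pi_n O_{n+1}$, where $\phi : \pi_n O_{n+1} \to \pi_n U_{n+1}$ denotes the natural map. The image of $\gamma$ in $\pi_n S^n$ lies in the kernel of the boundary $\pi_n S^n \to \pi_{n-1}O_n$, which sends the generator to $[TS^n]$; precisely because $n \neq 1, 3, 7$ is odd, the non-parallelizability of $S^n$ forces $[TS^n]$ to have order exactly $2$, so this kernel equals $2\Z$. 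Writing the image of $\gamma$ as $2k$, the element $\gamma - k\tau$ has vanishing image in $\pi_n S^n$ while still mapping to $\beta'$ in $\pi_n U_{n+1}$ by ingredient (b); by exactness of $\pi_n O_n \to \pi_n O_{n+1} \to \pi_n S^n$, it lifts to some $\delta \in \pi_n O_n$. Via (a) and commutativity, $\delta$ maps to $\beta$ in $\pi_n U_n$, as required.

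The main obstacle is producing a correcting class with the right pair of values (Euler number and complex stabilization): the tangent bundle of $S^{n+1}$ is the natural candidate, and the argument works precisely because the Euler number $\chi(S^{n+1}) = 2$ matches a generator of the image of $\pi_n O_{n+1} \to \pi_n S^n$. For $n = 3, 7$ this matching breaks down: the sphere $S^n$ is parallelizable, so the image of $\pi_n O_{n+1} \to \pi_n S^n$ is all of $\Z$ and $\tau$ cannot correct odd Euler values. This is exactly why those dimensions fail the lemma and require the separate geometric treatment deferred to Section \ref{sec: excep}.
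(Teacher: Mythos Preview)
Your proof is correct and takes a genuinely different route from the paper's. The paper splits the odd case into three subcases according to $n \bmod 8$: for $n \equiv 7 \pmod 8$ it invokes Kervaire's theorem that $\pi_n O_n \to \pi_n O$ is onto to force $\pi_n U_n \to \pi_n \Lambda_n$ to be zero; for $n \equiv 1,5 \pmod 8$ it uses that $\pi_n O_{n+1}$ is torsion while $\pi_n U_{n+1} \simeq \Z$ to make $\pi_n U_{n+1} \to \pi_n \Lambda_{n+1}$ injective; and for $n \equiv 3 \pmod 8$ it appeals to Kachi's computation $\pi_n \Lambda_n \simeq (\Z/2)^2$ to finish by a $2$-divisibility argument. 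Your argument is uniform over all odd $n \neq 1,3,7$: you correct the lift $\gamma \in \pi_n O_{n+1}$ by a multiple of $\tau = [TS^{n+1}]$, exploiting simultaneously that $\tau$ has image $\chi(S^{n+1}) = 2$ in $\pi_n S^n$ and image $0$ in $\pi_n U_{n+1}$. This is both shorter and more elementary---it needs neither Kachi's tables nor Kervaire's surjectivity result, only the order-$2$ statement for $[TS^n]$ (equivalently, non-parallelizability of $S^n$) and the stable triviality of $TS^{n+1}$. The paper's case analysis, on the other hand, ties the argument more visibly to the numerical tables assembled in Section~\ref{sec:tables}.
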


\begin{proof} Let $\beta \in \ker( \pi_n \Lambda_n \to \pi_{n-1} O_n ) \cap \ker( \pi_n \Lambda_n \to \pi_n \Lambda_{n+1})$. We proceed by cases to show that $\beta=0$.

\subsubsection{The case $n \equiv 0$ mod $(2)$}

If $n$ is even then $\pi_n U_n = 0$, hence the map $\pi_n \Lambda_n \to \pi_{n-1} O_n$ is a monomorphism and so $\beta \in \ker( \pi_n \Lambda_n \to \pi_{n-1} O_n)$ is necessarily zero.

\subsubsection{The case $n \equiv 7$ mod $(8)$, $n \geq 15$}

In this case the map $\pi_n O_n \to \pi_n U_n$ is an epimorphism by commutativity of the diagram
\[ \xymatrix{
	\pi_n O_n \ar[d] \ar[r] & \pi_n U_n \ar[d]\\
	\pi_n O  \ar[r]           &  \pi_{n} U }\]
Indeed, for $n \equiv 7$ mod $(8)$, $n \geq 15$, we note:
\begin{enumerate}
\item $\pi_n O_n \to \pi_n O$ an epimorphism as shown by Kervaire \cite{K60},
\item $\pi_n O \to \pi_nU$ is an isomorphism since $\pi_{8k+6}\Lambda = \pi_{8k+7} \Lambda=0$,
\item$\pi_n U_n \to \pi_n U$ is also an isomorphism since $\pi_nU_n$ is in the stable range,
\end{enumerate}
from which the conclusion follows. Hence $\pi_n U_n \to \pi_n \Lambda_n$ is the zero map, so $\pi_n \Lambda_n \to \pi_{n-1} O_n$ is an a monomorphism and we can argue as in the previous case.

\subsubsection{The case $n \equiv 1,5$ mod $(8)$}

In this case $\pi_n O_{n+1} \to \pi_n U_{n+1}$ is the zero map since $\pi_n O_{n+1}$ is isomorphic to $0$ or $\Z/2$ for $n$ congruent to 1 or 5 respectively while $\pi_n U_{n+1}$ is isomorphic to $\Z$. Hence the map $\pi_n U_{n+1}  \to \pi_n \Lambda_{n+1}$ is a monomorphism. We can therefore argue as follows. 

Let $\beta \in \ker( \pi_n \Lambda_n \to \pi_{n-1} O_n ) \cap \ker( \pi_n \Lambda_n \to \pi_n \Lambda_{n+1})$. Since $ \ker( \pi_n \Lambda_n \to \pi_{n-1} O_n ) = \text{im}( \pi_n U_n \to \pi_n \Lambda_n)$ we can lift $\beta$ to an element $\wh \beta \in \pi_n U_n$.  By commutativity of the diagram   
\[ \xymatrix{
	\pi_n U_n \ar[d] \ar[r] & \pi_n U_{n+1} \ar[d]\\
	\pi_n \Lambda_n  \ar[r]           &  \pi_{n} \Lambda_{n+1}} \]
it follows that the image $s(\wh \beta)$ of $\wh \beta$ under the stabilization map $\pi_n U_n \to \pi_n U_{n+1}$ is in the kernel of the map $\pi_n U_{n+1} \to \pi_n \Lambda_{n+1}$, Since $\pi_n U_{n+1} \to \pi_n \Lambda_{n+1}$ is a monomorphism, this implies $s(\wh \beta)=0$. But $\pi_n U_n \to \pi_n U_{n+1}$ is an isomorphism, so we must also have $\wh \beta=0$ and hence we conclude $\beta = 0$.

\subsubsection{The case $n \equiv 3$ mod $(8)$, $n \geq 11$}

In this case we have 
\[ \pi_n \Lambda_n = \Z/2 \oplus \Z/2, \quad \pi_n \Lambda_{n+1} = \Z/2 , \quad \pi_{n-1} O_n = \Z/2, \quad \pi_{n-1} O_{n+1} = 0. \]
Hence $\pi_n U_{n+1} \to \pi_n \Lambda_{n+1}$ is the unique nontrivial map $\Z \to \Z/2$ with the kernel $2 \Z \subset \Z$. 

Let $\beta \in \ker( \pi_n \Lambda_n \to \pi_{n-1} O_n ) \cap \ker( \pi_n \Lambda_n \to \pi_n \Lambda_{n+1})$. As in the previous case, we may choose $\wh \beta \in \pi_n U_n$ a lift of $\beta$, and the image $s(\wh \beta)$ of $\wh \beta$ under the stabilization map $\pi_n U_n \to \pi_n U_{n+1}$ is in the kernel of $\pi_n U_{n+1} \to \pi_n \Lambda_{n+1}$. It follows that $s(\wh \beta)$ is divisible by 2 in $\pi_n U_{n+1}$.

Since $\pi_n U_n \to \pi_n U_{n+1}$ is an isomorphism, we deduce that $\wh \beta$ is also divisible by 2, hence the same is true of $\beta$. But $\pi_n \Lambda_n$ is 2-torsion, so we conclude $\beta = 0$.

Having exhausted all cases, the proof is complete. \end{proof}

\subsection{Proof of the main theorem for $n \neq 1,3,7$}

Assume $n > 1$ in what follows.

\subsubsection{An Euler number computation}

Recall that a formal fold $(\Sigma,v)$ in $\R^n$ determines an element $\alpha(\Sigma,v)$ in $\ker(\pi_n \Lambda_n \to \Lambda_{n+1})$. The image of $\alpha(\Sigma,v)$ in $\pi_{n-1}O_n$ lies in the kernel of the map $\pi_{n-1}O_n \to \pi_{n-1} O_{n+1}$ by commutativity of the diagram
\[ \xymatrix{
	\pi_n \Lambda_n \ar[d] \ar[r] & \pi_n \Lambda_{n+1} \ar[d]\\
	\pi_{n-1}O_n  \ar[r]           &  \pi_{n-1} O_{n+1}   } \]
	
This is just a diagram chasing way of saying that since $\alpha(\Sigma,v)$ is a stably trivial Lagrangian distribution, in particular the underlying real vector bundle is stably trivial. It turns out that all stably trivial real vector bundles arise in this way:

\begin{lemma}\label{ref: poincare-hopf}
The images of the elements $\alpha(\Sigma,v)$ in $\pi_{n-1}O_{n}$ generate the subgroup $\ker( \pi_{n-1}O_n \to \pi_{n-1}O_{n+1})$.
\end{lemma}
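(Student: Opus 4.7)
From the long exact sequence associated to $O_n \to O_{n+1} \to S^n$ we have the boundary map $\pi_n S^n \to \pi_{n-1}O_n$ whose image equals $\ker(\pi_{n-1}O_n \to \pi_{n-1}O_{n+1})$ (since $\pi_{n-1}S^n = 0$ for $n > 1$). This boundary sends the identity $[\mathrm{id}] \in \pi_n S^n$ to $[TS^n]$, so the kernel is cyclic and generated by $[TS^n]$; it therefore suffices to exhibit a single formal fold $(\Sigma,v)$ with $[\gamma(\Sigma,v)] = \pm [TS^n]$. My candidate is $\Sigma = S^{n-1} \subset \R^n$, the unit sphere, with $v = v_{\mathrm{out}}$ the outward unit normal.

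For $n$ even I would apply Poincar\'e--Hopf to the section $s$ of $\gamma(\Sigma,v)$ defined by extending the constant vertical section $\p_{p_1}$ of $\nu = i \R^n$ (nowhere zero outside the collar $\Sigma \times [-1,1]$) across the collar via Hermitian orthogonal projection onto $\gamma$. Using the polar splitting of $T(T^*\R^n)|_{\Sigma \times [-1,1]}$ and the adapted frame $\{\p_{p_{\theta_1}},\dots,\p_{p_{\theta_{n-1}}},\, \sin(\tfrac{\pi t}{2})\p_{p_t}+\cos(\tfrac{\pi t}{2})\p_t\}$ for $\gamma$ on the collar, the tangential components of $s$ are of order $|y|$ (where $y$ is a local coordinate on $\Sigma$) while the normal component is proportional to $\langle q, e_1\rangle \sin(\pi t/2)$. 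Both vanish simultaneously only at the two antipodal points $(\pm e_1, 0)$ on the fold locus $\Sigma \times \{0\}$, and a direct determinant computation at each isolated zero yields a signed count of $\pm 2 = \pm \chi(S^n)$. This matches the Euler number of $TS^n$ and identifies $[\gamma(\Sigma,v)] = \pm [TS^n]$, a generator of the kernel.

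For $n$ odd every rank-$n$ oriented bundle over $S^n$ has vanishing Euler class, so Poincar\'e--Hopf by itself is inconclusive; I would instead compute the clutching function directly. Writing $S^n$ as the union of its closed ``inside'' and ``outside'' hemispheres meeting along $\Sigma$, the bundle $\gamma$ is canonically trivialized by $\nu$ on each, while crossing the collar rotates the normal factor $\ell_t$ of $\gamma$ by angle $\pi$ and leaves the tangential factor $i T\Sigma$ fixed. This reads off the Householder clutching function $c : S^{n-1} \to O_n$, $c(q) = I - 2qq^T$, which is the standard clutching representative of $[TS^n]$ in $\pi_{n-1}O_n$, so again $[\gamma(\Sigma,v)] = [TS^n]$ generates the kernel. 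The main obstacle is this odd-$n$ identification: the element $[TS^n]$ has order at most $2$ and is invisible to all stable characteristic classes, so it must be recognized directly from the homotopy class of $c$, with care taken in matching the trivializations of $\nu$ on the two hemispheres with the rotated frame of $\gamma$ across the collar.
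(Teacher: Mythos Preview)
Your proposal is correct and follows the same reduction as the paper: both identify $\ker(\pi_{n-1}O_n \to \pi_{n-1}O_{n+1})$ as cyclic on $[TS^n]$ and then show that the single formal fold $(S^{n-1},v_{\mathrm{out}})$ hits this generator. For $n$ even the arguments coincide in substance---the paper applies Poincar\'e--Hopf to a general domain $\Omega$ to get Euler number $2\chi(\Omega)$ (hence $2$ when $\Omega=D^n$), while you compute the same invariant via an explicit section with two nondegenerate zeros; these are two presentations of the same calculation.

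The genuine difference is your treatment of odd $n$. The paper disposes of $n=3,7$ by noting $\pi_{n-1}O_n=0$ and for the remaining odd $n$ simply asserts the identification ``by direct inspection,'' whereas you supply the content of that inspection: tracking the collar frame shows the clutching function is the Householder reflection $q\mapsto I-2qq^T$, which is the standard representative of $[TS^n]$. This clutching computation is in fact dimension-independent, so your odd-$n$ argument already subsumes the even case and gives a cleaner unified proof than the paper's case split. One small point worth making explicit in a final write-up is the (standard) verification that $I-2qq^T$ really is the clutching function of $TS^n$, since this is precisely the step that cannot be detected by stable invariants when $n$ is odd.
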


\begin{proof}
Consider first the case where $n$ is even. The subgroup $\ker( \pi_{n-1}O_n \to \pi_{n-1}O_{n+1}) = \text{im}( \pi_{n}S^{n} \to \pi_{n-1}O_n)$ is infinite cyclic and generated by $TS^n$, so it is enough to show that the Euler numbers of the real vector bundles underlying the distributions $\alpha(\Sigma,v)$ can realize any even integer. Let $\Omega \subset \R^n$ be a compact domain with smooth boundary. Set $\Sigma = \p \Omega$ and let $v$ be the outward normal to $\Omega$. A straightforward application of the Poincar\'e-Hopf index theorem shows that the Euler number of $(\Sigma, v)$ is equal to $2\chi(\Omega)$. Since $n > 1$, we can arrange for $\chi(\Omega)$ to take any integer value, which completes the proof.

Consider next the case where $n$ is odd. For $n=1,3,7$ the group $\pi_{n-1}O_n$ is trivial so there is nothing to prove. For $n \neq 1,3,7$ the group $\pi_{n-1}O_n$ has a single stably trivial element, which is the class of $TS^n$. By direct inspection this element is equal to the image of $\alpha(S^{n-1},n_{D^n})$ in $\pi_{n-1}O_n$, where $n_{D^n}$ is the outward normal to the unit disk $D^n=\{\|x\|\leq 1: \, \, x \in \R^n \}$. 
\end{proof}

\begin{corollary}\label{cor: iso}
For $n \neq 1,3,7$ the map $\ker(\pi_n \Lambda_n \to \pi_n \Lambda_{n+1}) \to \ker(\pi_{n-1}O_n \to \pi_{n-1}O_{n+1})$ is an isomorphism.
\end{corollary}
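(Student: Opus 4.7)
The plan is to assemble the corollary from the two preceding lemmas together with the naturality of the stabilization maps.

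First I would check that the map under consideration is well-defined, i.e.\ that $\pi_n\Lambda_n \to \pi_{n-1}O_n$ sends $\ker(\pi_n\Lambda_n \to \pi_n\Lambda_{n+1})$ into $\ker(\pi_{n-1}O_n \to \pi_{n-1}O_{n+1})$. This is immediate from the commutative square
\[ \xymatrix{
\pi_n\Lambda_n \ar[d]\ar[r] & \pi_n\Lambda_{n+1} \ar[d] \\
\pi_{n-1}O_n \ar[r] & \pi_{n-1}O_{n+1} } \]
induced by the stabilization inclusions $\Lambda_n \hookrightarrow \Lambda_{n+1}$ and $O_n \hookrightarrow O_{n+1}$ together with the naturality of the connecting homomorphism in the long exact sequence of the fibration $O_n \to U_n \to \Lambda_n$.

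Injectivity of the restricted map is then precisely the content of Lemma~\ref{lem: key lem}: for $n \neq 1,3,7$ we have $\ker(\pi_n\Lambda_n \to \pi_{n-1}O_n) \cap \ker(\pi_n\Lambda_n \to \pi_n\Lambda_{n+1})=0$, which says that the connecting map $\pi_n\Lambda_n \to \pi_{n-1}O_n$ is injective when restricted to $\ker(\pi_n\Lambda_n \to \pi_n\Lambda_{n+1})$.

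For surjectivity I would invoke Lemma~\ref{ref: poincare-hopf}: the images in $\pi_{n-1}O_n$ of the elements $\alpha(\Sigma,v) \in \pi_n\Lambda_n$, as $(\Sigma,v)$ ranges over formal folds in $\R^n$, already generate the subgroup $\ker(\pi_{n-1}O_n \to \pi_{n-1}O_{n+1})$. Since each $\alpha(\Sigma,v)$ lies in $\ker(\pi_n\Lambda_n \to \pi_n\Lambda_{n+1})$ by Lemma~\ref{lem: folds are stably trivial}, the map $\ker(\pi_n\Lambda_n \to \pi_n\Lambda_{n+1}) \to \ker(\pi_{n-1}O_n \to \pi_{n-1}O_{n+1})$ hits a generating set, and hence is surjective. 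Combined with the injectivity above, this yields the isomorphism. No step is really a main obstacle here: both ingredients have been established already, and the corollary is simply the combination of the two with a diagram-chase for well-definedness.
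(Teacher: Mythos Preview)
Your proposal is correct and follows exactly the paper's own approach: the paper's proof is the single sentence ``Injectivity is given by Lemma~\ref{lem: key lem} while surjectivity is given by Lemma~\ref{ref: poincare-hopf}.'' Your write-up simply makes explicit the well-definedness via the commutative square and the fact that each $\alpha(\Sigma,v)$ lies in the stably trivial kernel, both of which the paper treats as understood from the surrounding discussion.
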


\begin{proof}
Injectivity is given by Lemma \ref{lem: key lem} while surjectivity is given by Lemma \ref{ref: poincare-hopf}.
\end{proof}

\begin{remark}
When $n=3$ we have $\ker(\pi_3 \Lambda_3 \to \pi_3 \Lambda_{4}) \simeq 2 \cdot \Z/4 \subset \Z/4 \simeq \pi_3 \Lambda_3$ while $\pi_2O_3=0$. When $n=7$ we have  $\ker(\pi_7 \Lambda_7 \to \pi_7 \Lambda_{8}) = \pi_7 \Lambda_7\simeq \Z/2$ while $\pi_6O_7=0$. \end{remark}

\subsubsection{Conclusion of the proof}

We are now ready to prove Theorem \ref{thm: main} in the non-exceptional dimensions, which we recall states that every element of $\ker( \pi_n \Lambda_n \to \pi_n \Lambda_{n+1})$ admits a representative of the form $\alpha(\Sigma,v)$ for some formal fold $(\Sigma,v)$ in $\R^n$.

\begin{proof}[Proof of Theorem \ref{thm: main} for $n \neq 1,3,7$]  
 Let $\gamma \in \ker(\pi_n \Lambda_n \to \Lambda_{n+1})$. By Lemma \ref{ref: poincare-hopf} there exists a formal fold $(\Sigma,v)$ such that the images of $\gamma$ and $\alpha(\Sigma,v)$ in $\pi_{n-1}O_n$ are equal. But Corollary \ref{cor: iso} implies that $\gamma$ and $\alpha(\Sigma,v)$ are in fact equal in $\pi_n \Lambda_n$. \end{proof}

\begin{remark}\label{rem: gen}

It follows that for any $n \neq 1,3,7$ the subgroup $\ker(\pi_n \Lambda_n \to \pi_n \Lambda_{n+1})$ is cyclic with a generator given by $\alpha(S^{n-1},n_{D^n})$, where $n_{D^n}$ is the outward normal the unit disk $D^n=\{ \|x\| \leq 1: \, \, x \in \R^n \}$. As we will see below the same is true for the exceptional dimensions $n=1,3,7$. The element $\alpha(S^{n-1},n_{D^n})$ is 2-torsion for $n$ odd. For $n$ even it is not and we can obtain representatives for its multiples as follows. Given $k>0$, the element of $\pi_n \Lambda_n$ given by $k$ times $\alpha(S^{n-1}, n_{D^n})$ is equal to $\alpha(\p \Omega_k, n_{\Omega_k})$, where $\Omega_k$ is the disjoint union of $k$ disks in $\R^n$ and $n_{\Omega_k}$ is the outward normal. More generally, if $\Omega \subset \R^n$ is any domain of Euler characteristic $k$ then $\alpha(\p \Omega, n_\Omega)$ is a representative for $k\alpha(S^{n-1}, n_{D^n})$, where $n_\Omega$ is the outward normal to $\Omega$. Similarly, one can obtain a representative for $-k\alpha(S^{n-1},n_{D^n})$ by taking $\alpha(\p \Omega,n_\Omega)$, where $\Omega \subset \R^n$ is any domain with Euler characteristic $-k$ and $n_\Omega$ is the outward normal to $\Omega$.

\end{remark}

\section{The exceptional cases}\label{sec: excep}

\subsection{Complex trivializations}  To tackle the exceptional dimensions $n=3,7$ we will make use of an explicit complex trivialization of $T(T^*S^n)|_{S^n}$ together with a certain property of stable triviality satisfied by the trivialization. This trivialization is defined for all $n \geq 1$, but for $n=3,7$ we will further examine its interaction with the trivializations coming from quaternionic and octonionic geometry (which are not stably trivial), leading to a proof of Theorem \ref{thm: main} in those dimensions.

\subsubsection{The isomorphism $T(T^*S^n)|_{S^n} \simeq S^n \times \C^n$ } 
We will make use an explicit isomorphism of symplectic vector bundles $T(T^*S^n)|_{S^n} \simeq S^n \times \C^n$, which is defined as follows.

Let $e$ be the first unit vector in $\R^{n+1}$, and let $\theta=\theta(g)$ measure the angle of a vector $g\in S^n$ away from $e$; importantly, $\cos\theta=\langle e,g\rangle$. Each level set $\theta=\theta_0$ (other than $\theta_0=0,\pi$) is isometric to the scaled sphere $\sin(\theta)\;S^{n-1}$ by the mapping $g\mapsto g-\cos(\theta_0)e$. For any point $g$ in the level set $\theta=\theta_0$ and each vector $v\in\{e\}^\perp\subset\R^{n+1}$, define the coordinate $\theta_v=\theta_v(g)$ to be the angle between $g-\cos(\theta)e$ and $v$. These coordinates are well-defined except at the poles $\pm e$. In particular, fix an orthonormal basis $e_1,...,e_n$ of $\{e\}^\perp$, and define $\theta_i=\theta_{e_i}$.
	
	Now define the (discontinuous) vector fields $X=-\partial_\theta$ and $X_v=-\partial_{\theta_v}$, and write $X_i=-\partial_{\theta_i}$. Writing $J$ for the standard\footnote{That is, $J$ is compatible with the round metric and the canonical symplectic structure.} almost complex structure on $T(T^*S^n)|_{S^n}$, we define a complex trivialization of $T(T^*S^n)|_{S^n}$ by
	\begin{equation}\label{coolframe}
		E_i=-\cos(\theta_i)e^{J\theta}X+\sin(\theta_i)X_i,
	\end{equation}
	
\begin{definition}\label{def:isomorphism}
	For any $n\geq 1$, define the bundle map $F:T(T^*S^n)|_{S^n}\to S^n \times \C^n$ to be the one taking $E_i|_g$ to $(g,e_i)$ at each point $g\in S^n$.
\end{definition}

\begin{lemma}\label{lemma:isomorphism}
	For any $n\geq 1$, the map $F:T(T^*S^n)|_{S^n}\to S^n \times \C^n$ is a complex vector bundle isomorphism.
\end{lemma}
\begin{proof}
	The lemma boils down to showing that the maps $F|_g:T_g(T^*S^n)\to \{g\}\times\C^n$ are linear isomorphisms and vary continuously with $g\in S^n$; in turn, this follows from showing that $\{E_i\}$ is a continuous frame.
	
	It is clear from the expressions (\ref{coolframe}) that the sections $E_i$ are continuous and well-defined, so it remains to be seen that they are complex-linearly independent.
	
	Let $a,b\in\{e\}^\perp\subset\R^{n+1}$, and set $v=a+Jb=\sum (a^i+Jb^i)e_i$. We show that the complex linear combination $\sum (a^i+Jb^i)E_i$ results in $E_v:=E_a+JE_b$, where we define
	\[E_a=\|a\|\left(-\cos(\theta_a)e^{J\theta}X+\sin(\theta_a)X_a\right)\]
	and similarly for $E_b$. Indeed, for any $g\in S^n$,
	\begin{align}\label{equation:anglething}
	\begin{split}
		\sum (a^i+Jb^i)E_i|_g& =-\left[\sum(a^i+Jb^i)\cos(\theta_i)\right]e^{J\theta}X\\
		&\qquad+\sum(a^i+Jb^i)\sin(\theta_i)X_i.\\
		& =-\left[\langle a,g\rangle + J\langle a,g\rangle\right]e^{J\theta}X\\
		&\qquad+\sum(a^i+Jb^i)\sin(\theta_i)X_i.\\
		& =-\left[\|a\|\cos(\theta_a) + J\|b\|\cos(\theta_b)\right]e^{J\theta}X\\
		&\qquad+\sum(a^i+Jb^i)\sin(\theta_i)X_i.
	\end{split}
	\end{align}
	To deal with the second term above, rewrite
	\[X_u|_{g} = \left(\sin\theta_u\right)^{-1}\left(u-\cos(\theta_u)\cdot\frac{g-\cos(\theta)e}{\sin(\theta)}\right).\]
	Then we find
	\begin{align*}
		\sum(a^i+Jb^i)\sin(\theta_i)X_i &= \sum(a^i+Jb^i)\left(e_i-\cos(\theta_i)\cdot\frac{g-\cos(\theta)e}{\sin(\theta)}\right)\\
		&=a+Jb -\left(\langle a,g\rangle + J\langle b,g\rangle\right)\cdot\frac{g-\cos(\theta)e}{\sin(\theta)}\\
		&=a+Jb -\left(\|a\|\cos(\theta_a) + J\|b\|\cos(\theta_b)\right)\cdot\frac{g-\cos(\theta)e}{\sin(\theta)}\\
		&=\|a\|\sin(\theta_a)X_a+\|b\|\sin(\theta_b)X_b.
	\end{align*}
	Putting these two elements together implies
	\[\sum (a^i+Jb^i)E_i=E_a+JE_b=E_v.\]
	Since $E_v$ is nonzero for $v$ nonzero, this proves our result.
\end{proof}

\begin{remark}
Consider the Lagrangian distribution $\gamma \subset T(T^*S^n)|_{S^n}$ which is the preimage by $F$ of $S^n \times i\R^n$. It is straightforward to verify that $\gamma$ has fold type tangencies with $S^n$ along the equator $S^{n-1} =\{ x_0 =0\} \cap S^n$ and is transverse to $S^n$ everywhere else. 
\end{remark}

\subsubsection{Stable triviality of the frame}

We will also need the fact that the frame defined above is {\em stably trivial}, in the following sense. By stabilizing once, the vector bundle isomorphism $F:T(T^*S^n)|_{S^n}\to S^n\times\C^n$ extends to an isomorphism $ \wh F:T(T^*S^n)|_{S^n}\times\C\to S^n\times\C^{n+1}$. Identifying the extra factor of $\C$ as the complexification of the normal direction to the sphere and using the trivialization $T(T^*\R^{n+1})|_{S^n} \simeq S^n \times \C^{n+1}$ we may rewrite this as a map $$ \wh F:S^n\times\C^{n+1}\to S^n\times \C^{n+1}$$ which is a lift of the identity map $S^n \to S^n$ by fibrewise linear isomorphisms $\C^{n+1} \to \C^{n+1}$. 

In fact, these are unitary transformations, as can be verified using the explicit formulas provided by Definition \ref{def:isomorphism}.

\begin{lemma}\label{lemma:stabilize}
	The map $ \wh F:S^n\times\C^{n+1}\to S^n\times \C^{n+1}$ is trivial as an element of $\pi_nU_{n+1}$. 
\end{lemma}

\begin{remark} We note:
\begin{enumerate}
\item As a basepoint of $S^n$ we take the point $e$ where the frame $(E_i)_i$ agrees with the frame $e_i$, and as a basepoint of $U_n$ we take the identity matrix. 
\item It is sufficient to prove the triviality of $\wh F$ as an element of $\pi_n \GL(\C^{n+1})$ since the inclusion $U_n \subset \GL(\C^{n+1})$ is a homotopy equivalence.	
\end{enumerate}
\end{remark}

\begin{proof}
We continue in the language of the proof of Lemma \ref{lemma:isomorphism}.
	
	By stabilizing, we introduce a new vector field $E_0$ to our frame, everywhere orthogonal to $E_{i\neq 0}$. We can view this as an outward normal field to $S^n$. In short, $E_0\equiv\partial_R$, where $R$ is the outward radial coordinate (the norm in $\R^{n+1}$). In this setting, $\wh F$ takes the form 
	\[\wh F:S^n\times\C^{n+1}\to S^n\times\C^{n+1},\qquad E_i|_g\mapsto(g,e_i),\]
	with $\{e=e_0,e_1,...,e_n\}$ an orthonormal basis of $\C^{n+1}$. Our lemma thus boils down to the following claim: the frame $\{E_i\}$ is homotopic to the trivial frame $\{e_i\}$ through maps $S^n\to \GL(\C^{n+1})$. Indeed, if this is the case, then we can pre-compose the map $\wh F:E_i\mapsto e_i$ with this homotopy to perturb $\wh F$ itself continuously to the identity map $e_i\mapsto e_i$.
	
	We prove this by supplying a sequence of perturbations bringing $E_i$ to $e_i$; it of course is crucial (and we will prove this along the way) that the perturbations of the frames are complex-linearly independent at each point in time. 
	
	To begin, we apply two continuous homotopies from $E_i=E'_i(0)$ to $E'_i(1)$. 
	\begin{align*}
		E_i'(t)&=E_i +t\cos(\theta_i)\sin(\theta)\partial_R,\\
		E_0'(t) &= e^{-J\theta t}E_0 + t\sin(\theta)X_0,
	\end{align*}
	where we let $t$ go from 0 to 1 (and we exclude 0 from the index $i$). We can extend these perturbations to general $E_v$, where $v=a+Jb=\sum_{i>0} (a^ie_i+b^iJe_i)$, $a,b \in \{e\}^\perp \subset \R^{n+1}$ using the formula
	\[E_v'(t)=E_v +t\left(\|a\|\cos(\theta_a)+J\|b\|\cos(\theta_b)\right)\sin(\theta)\partial_R.\]
	The resulting transformation $E_v\mapsto E'_v$ is a complex-linear isomorphism, as we see from the following calculation:
	\begin{align*}
		\sum\nolimits_i \left(a^iE'_{i}(t)+b^iJE'_{j}(t)\right)\big|_g&=\sum\nolimits_i \left(a^iE_{i}+b^iJE_{j}\right)\\
		&\qquad +t\sum\nolimits_i \left(a^i\cos(\theta_i)+b^iJ\cos(\theta_i)\right)\sin(\theta)\partial_R,\\\
		&=E_{v} +t\sum\nolimits_i \left((a^i+b^iJ)\langle x,\mathbf{e}_i\rangle\right)\sin(\theta)\partial_R,\\
		&=E_{v} +t\left(\langle g,a\rangle+J\langle g,b\rangle\right)\sin(\theta)\partial_R,\\
		&=E_{v} +t\left(\|a\|\cos(\theta_a)+J\|b\|\cos(\theta_b)\right)\sin(\theta)\partial_R,
	\end{align*}
	In particular, the $E'_{i}(t)$ remain linearly independent throughout the perturbation. Furthermore, this means that if $E_0'(t)$ were in the span of $\{E'_i(t)\}$, then we would have $E_0'(t)=E'_v(t)$ for some nonzero $v$. We can see that this would require $t=1$; otherwise, the projection of $E'_0$ to $T(T^*S^n)|_{S^n}$ has norm strictly larger than the projection of $E'_0$ to $\operatorname{span}(\partial_R)$, while the opposite is true of $E'_v$.  
	
	For the same reason, we see that this can only happen when $\sin(\theta)=1$ and $\cos(\theta_v)=\pm 1$, and thus only when $v=\beta a$ for some nonzero $\beta \in \C$ and some vector $a\in \{e\}^\perp \subset \R^{n+1}$. Indeed, we could not satisfy $\cos(\theta_a),\cos(\theta_b)=\pm 1$ for $a$ not a multiple of $b$, so we could not have $v=a+Jb$ for such vectors. Knowing this, we have
	\[E'_0 = -J\partial_R + X_0, \qquad E'_v = \mp J\beta X_0\pm \beta\partial_R,\]
	for some nonzero $\beta\in\C$. These are indeed independent; within the subspace spanned by $X_0$ and $\partial_R$, these two vectors give a determinant of $\pm 2\beta\neq 0$.
	
	Finally, we make the two perturbations
	\begin{align*}
		E_i''(s)&=E'_i(1) +sJ\cos(\theta_i)\sin(\theta)X_0,\\
		E_0''(s) &=E'_0(1)+ sJ\sin(\theta)\partial_R,
	\end{align*}
	which extend as before to general $E'_v$.
	For clarity, here are the closed-form expressions of $E_0''$ and $E_v''$, for $v \in \{e\}^\perp \subset \R^{n+1}$.
	\begin{align*}
		E_v''(s)&=\|v\|\left[\sin(\theta_i)X_v-\cos(\theta_v)\cos(\theta)X_0+\cos(\theta_v)\sin(\theta)\partial_R+(s-1)J\cos(\theta_v)\sin(\theta)X_0\right],\\
		E_0''(s) &=\cos(\theta)\partial_R+ \sin(\theta)X_0+(s-1)J\sin(\theta)\partial_R.
	\end{align*}
	Since $v\mapsto E_v''(s)$ is a complex-linear, the vector fields $\{E''_0(s),E''_i(s)\}_i$ can only be linearly dependent if $E_0''(s)=E_v''(s)$ for some $v\neq 0$. Suppose $v=a+Jb$, for $a,b\in \{e\}^\perp \subset \R^{n+1}$. From the above expressions, we can only satisfy $E_0''(s)=E_a''(s)+JE_b''(s)$ at points $g\in S^n$ where the components along $X_a$ and $X_b$ vanish; this requires $\sin(\theta_a)=\sin(\theta_b)=0$, further implying that $\cos(\theta_a),\cos(\theta_b)=\pm 1$, that $a$ is a multiple of $b$, and thus that $E_v''(s)=\beta E_a''(s)$ for some nonzero $\beta\in\C$. In this case, the determinant of $E_0''(s),E_v''(s)$ within the subspace $\operatorname{span}(X_0,\partial_R)$ is
	\[\beta\|v\|\det\left(\begin{smallmatrix}
		\sin\theta & \mp\cos\theta\pm (s-1)J\sin\theta\\
		\cos\theta+(s-1)J\sin\theta& \pm\sin(\theta)
	\end{smallmatrix}\right)=\pm \beta\|v\|\left(1+(s-1)^2\sin^2\theta\right)\neq 0.\]
	Thus, the frame remains linearly independent over the full perturbation. Finally, it is clear from the above expressions that $E''_v(1)\equiv v$ and $E''_0(1)\equiv e$, which proves the lemma.
\end{proof}

\subsection{Proof of the main theorem in the exceptional cases}

It remains to prove Theorem \ref{thm: main} in the cases $n=1,3,7$. The case $n=1$ is trivial and will not be discussed further.

\subsubsection{The case $n=3$}
Identify $S^3\subset\R^4\simeq\mathbf{H}$ with the set of unit quaternions, giving it the structure of a Lie group. As such, we can recover an orthonormal trivialization $E_1,E_2,E_3$ of left-invariant vector fields in $TS^3$ by extending the basis $\{i,j,k\}$ of $T_{\mathbf{1}}S^3$.

This gives rise to a complex trivialization of $T(T^*S^3)|_{S^3}$ (equipped with the unique almost complex structure compatible with the symplectic form and the round metric) and thus an isomorphism $$G:T(T^*S^3)|_{S^3}\isomto S^3\times\C^3$$ distinct from the isomorphism $F$ considered in Lemma \ref{lemma:isomorphism}. 

Thus we obtain an element $\beta:=iF\circ G^{-1}\in\pi_3U_3$.

\begin{lemma}\label{lemma: folds}
The image $\alpha$ of $\beta$ in $\pi_3 \Lambda_3$ is equal to the element $\alpha(S^2,n_{D^3})$.
\end{lemma}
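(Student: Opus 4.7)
The plan is to exhibit $\alpha$ directly as the pushforward of the formal fold $(S^2, n_{D^3})$ under the standard identification of the upper hemisphere of $S^3$ with the closed unit disk $D^3 \subset \R^3$. Concretely, I will show that the Lagrangian distribution representing $\alpha$ has fold tangencies precisely along the equator $\{\theta = \pi/2\} \subset S^3$, carrying the outward Maslov co-orientation. First I would unwind the identification $\pi_3\Lambda_3 = \pi_3(U_3/O_3)$: the projection $U_3 \to \Lambda_3$ sends a unitary matrix $U$ to $U(\R^3)$, so the image of $\beta = iF \circ G^{-1}$ is the map $\alpha(x) = (iF_x \circ G_x^{-1})(\R^3)$. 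Because $G$ is the complex trivialization arising from the quaternionic left-invariant frame $(xi, xj, xk)$ of $TS^3$, we have $G_x^{-1}(\R^3) = T_xS^3$, and hence $\alpha(x) = iF_x(T_xS^3)$.

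Next, I would locate the tangency locus of $\alpha$ with respect to the fixed Lagrangian $\R^3 \subset \C^3$, which consists of points $x \in S^3$ where $F_x(T_xS^3) \cap i\R^3 \neq 0$. Using the defining formula $E_i = -\cos(\theta_i)e^{J\theta}X + \sin(\theta_i)X_i$ together with the linear relation $\sum_i \cos(\theta_i)\sin(\theta_i) X_i = 0$ among the coordinate vector fields on $S^3$, the key observation is that at any equatorial point (where $\theta = \pi/2$ and $e^{J\theta} = J$) the meridional tangent vector $X = e$ satisfies
\[F_x(X) = i\sum_i \cos(\theta_i)\,e_i \in i\R^3.\]
A linear-algebra computation tracking real and imaginary parts then shows that $F_x(T_xS^3) \cap i\R^3$ equals the one-dimensional real span of $F_x(X)$ on the equator and vanishes elsewhere on $S^3 \setminus \{\pm e\}$; at the poles one checks by continuity that $F_x(T_xS^3) = \R^3$, giving no tangency there either. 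The line field $\ell_x = \alpha(x) \cap \R^3$ on the equator corresponds under $F$ to the meridional direction $X$, which is transverse to the equator $\Sigma \cong S^2 \subset S^3$, so the tangency is of fold type.

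Finally, I would check that the Maslov co-orientation $v$ of $(\Sigma, v)$ is the outward normal with respect to the upper hemisphere $D^3 = \{\theta \leq \pi/2\}$. This is a sign calculation monitoring the direction of rotation of $\alpha(x)$ in $\Lambda_3$ as $x$ crosses $\Sigma$; the conventions built into the formula for $E_i$ ensure that this rotation is positive in the outward direction. Under a basepoint-preserving diffeomorphism of the closed upper hemisphere of $S^3$ with $D^3 \subset \R^3$ that sends the south pole $-e$ to the basepoint at infinity, the formal fold $(\Sigma, v)$ transports to $(S^2, n_{D^3})$, whence $\alpha = \alpha(S^2, n_{D^3})$ in $\pi_3\Lambda_3$.

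The hard part of this plan is the linear-algebra computation in the second step, where one must manage the coordinate singularities of the spherical angles $(\theta, \theta_i)$ at the poles and correctly identify the dimension of $F_x(T_xS^3) \cap i\R^3$ in the presence of the redundancy $\sum_i \cos(\theta_i)\sin(\theta_i) X_i = 0$. A more homotopical alternative avoiding this direct calculation is to stabilize: one identifies $\widehat{G}$ with the left-quaternion-multiplication map $L: S^3 \to SO_4 \hookrightarrow U_4$, combines this with Lemma \ref{lemma:stabilize} to put $\widehat{\beta}$ in the image of $\pi_3 O_4 \to \pi_3 U_4$, and then invokes the Bott-periodicity fact that $\pi_3 O \to \pi_3 U$ is multiplication by $\pm 2$ to deduce that $\alpha$ equals $\pm 2$ in $\pi_3\Lambda_3 \simeq \Z/4$; this forces $\alpha$ to coincide with $\alpha(S^2, n_{D^3})$, both being the unique nontrivial element of $\ker(\pi_3\Lambda_3 \to \pi_3\Lambda_4) \simeq \Z/2$.
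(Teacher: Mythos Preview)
Your main approach (the first three paragraphs) is correct and is essentially the paper's own argument, just carried out in more detail. The paper's proof is a three-sentence sketch: it notes that $G^{-1}$ sends $\R^3$ to $TS^3$, then asserts (relying on an earlier remark left as ``straightforward to verify'') that $iF(TS^3)$ is transverse to $\R^3$ away from the equator $\{\mathbf 1\}^\perp \cap S^3$ and has fold tangencies there. You actually unpack that assertion by computing $F_x(X)$ at equatorial points. The paper does not bother to check the Maslov co-orientation, which is harmless here since the element is $2$-torsion in $\Z/4$.

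Your proposed ``alternative'' in the last paragraph, however, is circular as a proof of \emph{this} lemma. The stabilization argument you sketch shows $\alpha = 2 \in \pi_3\Lambda_3 \simeq \Z/4$; this is precisely what the paper does in the paragraph \emph{after} the lemma, as part of the proof of Theorem~\ref{thm: main} for $n=3$. To then conclude $\alpha = \alpha(S^2,n_{D^3})$ you assert that $\alpha(S^2,n_{D^3})$ is the unique \emph{nontrivial} stably trivial element, but you have not shown $\alpha(S^2,n_{D^3}) \neq 0$ --- and in the paper's logical flow that nontriviality is obtained \emph{from} this lemma. So the alternative establishes the theorem only modulo an independent argument that $\alpha(S^2,n_{D^3})$ is nonzero in $\pi_3\Lambda_3$, which you do not supply.
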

\begin{proof} We argue as follows. On the one hand, $G^{-1}$ maps $S^3\times\R^3$ to $TS^3$ and $S^3\times i\R^3$ to the vertical distribution $\nu$ of $T(T^*S^3)|_{S^3}$, by construction. On the other, $iF(TS^3)$ is transverse to $S^3\times \R^3$ everywhere away from the equator $\{\mathbf{1}\in\mathbf{H}\}^\perp\cap S^3$, but has fold tangencies with $S^3\times\R^3$ along that equator. Thus, $\alpha$ has folds along that same equator, but no other singularities anywhere else. \end{proof}

\begin{figure}[h]
\includegraphics[scale=0.5]{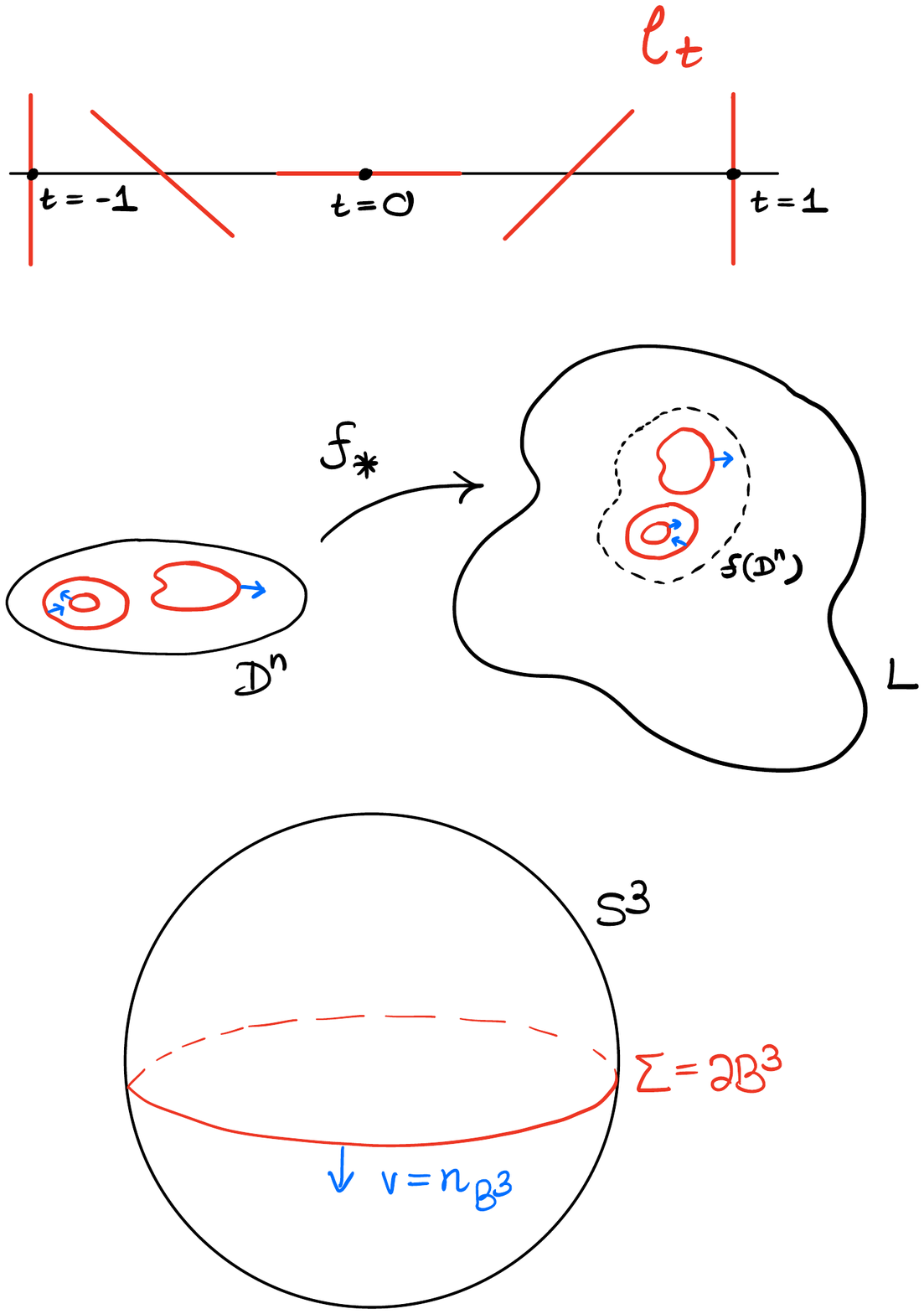}
\caption{The element $\alpha(S^2,n_{B^3}) \in \pi_3 \Lambda_3$.}
\label{element}
\end{figure}

Next, consider the element $\eta\in\pi_3O_4$ given by left-quaternion multiplication---that is, for $g\in S^3\subset\mathbf{H}$ and $h\in\R^4\simeq\mathbf{H}$, we have $\eta(g)\cdot h=gh\in\R^4$. We claim that the image of $\eta$ in $\pi_3U_4 \simeq \Z$ is equal to twice a generator, which can be seen as follows. It is well known that the image of $\eta$ in $\pi_3O$ is a generator \cite{AH61}. In particular, its image in the stable group $\pi_3O_5$ must be a generator. Consider the map $\pi_3O_5\to\pi_3U_5$ in the following diagram:
\begin{equation}\label{diagram:quatstabilization} \xymatrix{
		\pi_3O_5\ar[r]& \pi_3U_5\ar[r] & \pi_3\Lambda_5\ar[r]& \pi_2O_5\\
		\pi_3O_4\ar[u]\ar[r]&  \pi_3U_4  \ar[u]&   }
\end{equation}
Since $\pi_3\Lambda_5\simeq\Z/2$ and $\pi_2O_5\simeq 0$, the map $\pi_3O_5\to \pi_3U_5$, which is $\Z \to \Z$, sends a generator to twice a generator. Since $\pi_3U_4 \to \pi_3U_5$ is an isomorphism, it follows that the image of $\eta$ in $\pi_3U_4$ is equal to twice a generator, as claimed.

\begin{proof}[Proof of Theorem \ref{thm: main} in the case $n=3$]

Recall that $\pi_3\Lambda_3\simeq\Z/4$ and $\pi_3 \Lambda_4 \simeq \Z/2$, hence the stabilization map $\pi_3 \Lambda_3 \to \pi_3 \Lambda_4$  is the unique epimorphism $\Z/4 \to \Z/2$ whose kernel is the single element $2\in\Z/4$ corresponding to twice a generator.

Hence in view of Lemma \ref{lemma: folds} it suffices to prove that $ \alpha \in \pi_3 \Lambda_3$ is equal to twice a generator. Since the map $\pi_3 U_3 \to \pi_3 \Lambda_3$ sends a generator to a generator, it also suffices to prove that $\beta \in \pi_3 U_3$ is equal to twice a generator. Finally, since $\pi_3 U_3 \to \pi_3 U_4$ is an isomorphism and we know the image of the element $\eta$ under the map $\pi_3 O_4 \to \pi_3U_4$ to equal twice a generator, it suffices to show that the images of $\beta$ and $\eta$ in $\pi_3U_4$ are identical.

In stabilizing $\beta$ to an element $\wh \beta \in \pi_3U_4$, consider the additional $4^{th}$ unit vector field $E_0$ as the complexification of the outward unit normal to the sphere. Considering $S^3\times\C^4$ as a restriction of the tangent bundle of the complexified Lie group $\mathbf{H}\otimes\C$, note that $E_0$ is also left-invariant (similar to $E_1,E_2,E_3$). Extend $F$ and $G$ to complex vector bundle isomorphisms $\wh F, \wh G:S^3\times\C^4\to S^3\times\C^4$ by $\wh F(E_0|_g)=\wh G(E_0|_g)=(g,e_0)$. Since $E_0$ is left-invariant in the sense mentioned above, $\wh G^{-1}$ is simply the complexified quaternion multiplication map, i.e. the image of $\eta$ in $\pi_3U_4$. 

That $i\wh F$ is homotopic to the identity follows from Lemma \ref{lemma:stabilize}; thus, $\wh \beta=i\wh F\circ \wh G^{-1}$ is equal to the image of $\eta$ in $\pi_3U_4$, and we are done. \end{proof}

\begin{remark}\label{rem: alt}
We note that in the case $n=3$ one may alternatively argue in the following way. By chasing the diagram
\[ \xymatrix{
	\pi_3 U_2 \ar[d] \ar[r] & \pi_3 U_3 \ar[d]\\
	\pi_3 \Lambda_2  \ar[r]           &  \pi_3 \Lambda_3   } \]
it follows that a generator of $\pi_3 \Lambda_3$ is given by the stabilization of the image under $\pi_3 U_2 \to \pi_3 \Lambda_2$ of a generator of $\pi_3 U_2$. But from the determinant fibration $SU_2 \to U_2 \to U_1$ we see that a generator of $\pi_3 U_2$ is given by the image of a generator of $\pi_3 SU_2$, which we can take to be the identity map under the standard identification $S^3 \simeq SU_2$ given by

\[(z,w) \mapsto
\begin{bmatrix}
z     & - \overline{w}  \\
  w      & \overline{z}
\end{bmatrix}  , \qquad (z,w) \in S^3 \subset \C^2 .\]

One may explicitly compute the tangencies of the resulting map $S^3 \to \Lambda_2$ with a suitable Lagrangian plane to be $\Sigma^2$-nonsingular, and more precisely to consist of a $\Sigma^1$ locus on a torus which is the boundary of a standard genus 1 handlebody in $\R^3$, with $\Sigma^{110}$ pleats on a $(1,1)$ curve on the torus and no $\Sigma^{1110}$ points, see Figure \ref{11curve}. 

It is then an entertaining exercise in Entov's surgery of singularities \cite{En97} to show that the disjoint union of two copies of this chain of singularities can be surgered into a sphere of $\Sigma^{10}$ folds, i.e. into the element $\alpha(S^2,n_{B^3})$. Hence the generator of $\ker(\pi_3 \Lambda_3 \to \pi_3 \Lambda_4)$, which is equal to twice a generator of $\pi_3\Lambda_3$, is represented by $\alpha(S^2,n_{B^3})$. We know of no analogous explicit argument in the case $n=7$, which we discuss next.
\end{remark}

\subsubsection{The case $n=7$}
Just as in the quaternionic case earlier, we identify $S^7\subset \R^8\simeq\mathbf{O}$ with the set of unit octonions. Let $e=\mathbf{1}\in\mathbf{O}$ be the first unit vector, and---for any unit octonion $g\in S^7$---define the left-multiplication map
\[L_g:T_eS^7\to T_gS^7,\qquad (e,v)\mapsto(g,g\cdot v),\]
where we view $v\in T_eS^7\simeq\{x\in\R^8\simeq\mathbf{O}\;|\;\langle x,e\rangle=0\}$ as an octonion itself. This yields a trivialization $E_1,...,E_7$ of $TS^7$, where $E_i|_e=e_i$ is the $i^{th}$ unit vector in $\R^8\supset T_eS^7$ and $E_i|_{g}=L_gE_i|_{e}$. 

This gives rise to a complex trivialization of $T(T^*S^7)|_{S^7}$ (equipped with the unique almost complex structure compatible with the symplectic form and the round metric), and thus an isomorphism $$G:T(T^*S^7)|_{S^7}\isomto S^7\times\C^7$$ distinct from the isomorphism $F$ considered in Lemma \ref{lemma:isomorphism}. 

Thus we obtain an element $\beta:=iF\circ G^{-1}\in\pi_7U_7$.

\begin{lemma}\label{lemma: folds redux}
The image $\alpha$ of $\beta$ in $\pi_7 \Lambda_7$ is equal to the element $\alpha(S^6,n_{D^7})$.
\end{lemma}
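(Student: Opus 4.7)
The plan is to mimic, essentially verbatim, the proof of Lemma \ref{lemma: folds}, substituting octonions for quaternions and raising the relevant dimension from $3$ to $7$. First I would unwind the definitions: for each $g \in S^7$ the unitary $\beta(g) = iF(g) \circ G(g)^{-1} \in U_7$ descends to the Lagrangian plane $\alpha(g) = iF(T_g S^7) \subset \C^7$. This uses the fact that $G$ is, by construction, the complex vector bundle trivialization induced by the left-invariant octonionic frame $E_1,\ldots,E_7$; in particular $G$ carries $TS^7$ to the real distribution $S^7 \times \R^7$, and by complex linearity it sends the vertical distribution $\nu$ to $S^7 \times i\R^7$. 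Hence $G^{-1}(g)(\R^7) = T_g S^7$ for every $g$.

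Second, I would identify the tangency locus of $\alpha$ with the real Lagrangian plane $\R^7$ in each fibre. A point $g \in S^7$ is a tangency point precisely when $iF(T_g S^7) \cap \R^7 \neq \{0\}$, equivalently $F(T_g S^7) \cap i\R^7 \neq \{0\}$. By the remark immediately following Lemma \ref{lemma:isomorphism}, the Lagrangian distribution $F^{-1}(S^7 \times i\R^7)$ has fold type tangencies with $TS^7$ precisely along the equator $\{x_0 = 0\} \cap S^7 \simeq S^6$, and is transverse to $TS^7$ everywhere else. Consequently $\alpha$ has fold-type tangencies exactly along this equatorial $S^6 \subset S^7$, and no other tangencies.

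Finally, after stereographically projecting $S^7$ onto $\R^7$ with basepoint outside the equator (for instance $-e$), the tangency locus becomes an embedded $6$-sphere in $\R^7$, which up to ambient isotopy is $\partial D^7$. Therefore $\alpha = \alpha(S^6, v)$ for some co-orientation $v$. The only apparent subtlety is that the geometric description determines $\alpha$ only up to the choice of co-orientation; however, $\pi_7 \Lambda_7 \simeq \Z/2$ from the table in Section \ref{sec:tables}, so every element of $\pi_7 \Lambda_7$ is 2-torsion and $\alpha(S^6, v)$ is independent of $v$. This yields the desired equality $\alpha = \alpha(S^6, n_{D^7})$. I expect no serious obstacle in this argument; the tangency analysis is handed to us by the remark after Lemma \ref{lemma:isomorphism} (which works uniformly in $n$), and the co-orientation ambiguity that was slightly delicate in the $n=3$ case disappears here because $\pi_7 \Lambda_7$ is 2-torsion.
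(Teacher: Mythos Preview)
Your proposal is correct and follows essentially the same approach as the paper's proof, which likewise observes that $G^{-1}$ sends $S^7\times\R^7$ to $TS^7$ and then invokes the fold behavior of $iF(TS^7)$ along the equator. Your treatment is in fact slightly more thorough: the paper leaves the Maslov co-orientation implicit, whereas you explicitly dispose of the co-orientation ambiguity via the 2-torsion of $\pi_7\Lambda_7$, which is a clean way to avoid computing it directly.
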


\begin{proof} On the one hand, $G^{-1}$ maps $S^7\times\R^7$ to $TS^7$ and $S^7\times i\R^7$ to the vertical distribution $\nu$ of $T(T^*S^3)|_{S^3}$, by construction. On the other, $iF(TS^7)$ is transverse to $S^7\times \R^7$ everywhere away from the equator $\{e \}^\perp\cap S^7$, and has fold tangencies with $S^7 \times \R^7$ along that equator. Thus, $\alpha$ has folds along that same equator, but no other singularities anywhere else. \end{proof}

Next, consider the element $\eta\in\pi_7O_8$ given by left-octonion multiplication---that is, for $g\in S^7\simeq O_1(\mathbf{O})$ and $h\in\R^8\simeq\mathbf{O}$, we have $\eta(g)\cdot h=gh\in\R^8$. We claim that the image of $\eta$ in $\pi_7U_8$ is a generator, which can be seen as follows. It is well known that that the image of $\eta$ generates $\pi_7O$ \cite{AH61}; in particular, it generates $\pi_7O_9$. Consider the diagram
\begin{equation}\label{diagram:octstabilization} \xymatrix{
		\pi_7O_9\ar[r]& \pi_7U_9\ar[r] & \pi_7\Lambda_9\\
		\pi_7O_8\ar[u]\ar[r]&  \pi_7U_8 \ar[u] &   }
\end{equation}
From the fact that $\pi_7 \Lambda_9=0$, we see that $\pi_7O_9 \to \pi_7U_9$ is an isomorphism $\Z \to \Z$. Since $\pi_7U_8 \to \pi_7U_9$ is an isomorphism, it follows that the image of $\eta$ in $\pi_7U_8$ must be a generator, which establishes the claim.

\begin{proof}[Proof of Theorem \ref{thm: main} in the case $n=7$]
Recall that $\pi_7\Lambda_7\simeq\Z/2$ has only one nonzero element; it is stably trivial because $\pi_7\Lambda_8=0$.

In view of Lemma \ref{lemma: folds redux} we need to prove that $\alpha \in \pi_7\Lambda_7$ is this unique non-zero element. Since the map $\pi_7U_7 \to \pi_7\Lambda_7$ is the unique non-zero map $\Z \to \Z/2$, it suffices to show that $\beta \in \pi_7U_7$ is a generator. Finally, since the image of $\eta$ under the map $\pi_7O_8 \to \pi_7U_8$ is a generator and $\pi_7U_7 \to \pi_7U_8$ is an isomorphism, it suffices to show that the images of $\beta$ and $\eta$ in $\pi_7U_8$ are identical.
 
In stabilizing $\beta$ to an element $\wh \beta \in \pi_7U_8$, consider the additional $8^{th}$ unit vector field $E_0$ as the outward unit normal to the sphere. The multiplication map $L_g$ is diagonal along the radial coordinate, so we have $E_0|_g=L_gE_0|_e$, just as with $E_1,...,E_7$.

Extend $F$ and $G$ to (vector bundle) isomorphisms $\wh F, \wh G: S^7\times\C^8\to S^7\times\C^8$ by $\wh F(E_0|_g)=\wh G(E_0|_g)=(g,e_0)$. We can see that $\wh G^{-1}$ is simply the complexified octonion multiplication map, i.e. the image of $\eta$ in $\pi_7U_8$. Indeed, suppose $v=\sum v^ie_i$, and calculate
\[\wh G^{-1}(g,v)=\sum v^i\wh G^{-1}(g,e_i)=\sum v^iE_i|_g=\sum v^iL_ge_i=L_gv.\]
That $i\wh F$ is homotopic to the identity follows from Lemma \ref{lemma:stabilize}; thus, $\wh{\beta}=i \wh F\circ \wh G^{-1}$ is equal to the image of $\eta$ in $\pi_7U_8$, and the theorem is proved. \end{proof}

\section{Applications}\label{sec: app}

\subsection{Arborealization of Weinstein manifolds with a single handle}\label{sec: ap1}

We begin by briefly recalling some basic definitions of symplectic topology.

\begin{definition}
	A {\em Liouville domain} $(W, \lambda)$ consists of a compact manifold with boundary equipped with an exact symplectic form $\omega = d \lambda$ together with a choice of primitive $\lambda$ such that the vector field $Z$ which is $\omega$-dual to $\lambda$ is outwards pointing along $\p W$.
\end{definition}

\begin{definition}
	A {\em Weinstein domain} $(W, \lambda, \phi)$ consists of a Liouville domain $(W, \lambda)$ together with a Morse function $\phi : W \to \R$ which is Lyapunov for $Z$.
\end{definition}

The Lyapunov condition means that $Z$ is gradient-like for $\phi$. The skeleton of a Weinstein domain $(W, \lambda, \phi)$  is the union of the stable manifolds of the critical points of $\phi$, hence is a union of isotropic submanifolds. The skeleton is in general a quite singular object, but there is a particularly simple class of Lagrangian singularities introduced by Nadler in \cite{N13,N15} and further developed in \cite{St18,AGEN20a} called arboreal singularities.  That four dimensional Weinstein manifolds admit skeleta with arboreal singularities was proved in \cite{St18}. 

In arbitrary dimensions, it was proved in \cite{AGEN20b} that if $TW$ admits a global field of Lagrangian planes, then the Weinstein structure of $(W, \lambda, \phi)$ can be deformed so that the skeleton has arboreal singularities. The proof relies on the ridgification theorem \cite{AGEN19}, which builds on the h-principle for the simplification of caustics \cite{AG18b} but is somewhat more subtle and has a greater range of applicability. 

As a corollary of our main result Theorem \ref{thm:intro} we will now show that for the class of polarized Weinstein domains admitting a Lyapunov function with only two critical points it is possible to apply the h-principle for the simplification of caustics directly, following the approach of \cite{St18}, and thus avoiding the more complicated treatment of \cite{AGEN20b}, which is only necessary when one needs to control the interaction of three or more strata in the skeleton. Moreover, for this special class of Weinstein domains we show that the skeleton can be arranged to have arboreal singularities of a particularly simple type, which does not directly follow from \cite{AGEN20b}. 

\begin{remark}
We claim no originality in the arborealization strategy, which simply follows the blueprint of \cite{St18}, all we do is verify that the necessary homotopical hypotheses are satisfied, which is a consequence of Theorem \ref{thm: main}.
\end{remark}

We recall that arboreal singularities are classified by finite rooted trees equipped with a decoration of signs $\pm 1$ on each edge not adjacent to the root. The height of a vertex is defined to be the minimal number of edges in a path between that vertex and the root. The height of a tree is defined to be the maximal height among all vertices. The height of an arboreal singularity is defined to be the height of the corresponding signed rooted tree.

\begin{corollary}
	Let $(W, \lambda, \phi)$ be a Weinstein manifold such that $TW$ admits a global field of Lagrangian planes and such that the Morse Lyapunov function $\phi$ only has two critical points. Then by a homotopy of the Weinstein structure we can arrange it so that the skeleton of $(W, \lambda)$ has arboreal, and moreover so that the arboreal singularities which appear in the skeleton have height $ \leq 2$.
\end{corollary}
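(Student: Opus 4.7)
The plan is to follow the blueprint sketched after the statement, namely the strategy used by Starkston in dimension four \cite{St18}, and verify that the homotopical obstruction to applying the simplification of caustics vanishes precisely because $TW$ admits a global field of Lagrangian planes. First I would use the hypothesis that $\phi$ has exactly two critical points to deduce that $W$ is Weinstein homotopy equivalent to the result of attaching a single index $n$ handle to the standard Darboux ball $B^{2n}$ along a Legendrian sphere $\Lambda \subset (S^{2n-1}, \xi_{\std})$. By a standard Morse--Bott blowup procedure in the spirit of \cite{St18}, I would then deform the Weinstein structure so that the skeleton contains an embedded Lagrangian disk $D \subset W$, a neighborhood of which is Weinstein equivalent to an open subset of $T^*D$, and so that the stable manifold of the index $n$ critical point lifts to a Legendrian $\widetilde{\Lambda} \subset J^1 D = T^*D \times \R$ whose front projection onto $D \times \R$ carries all the caustic information of the skeleton.

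Next I would analyze the caustic of $\widetilde{\Lambda}$ with respect to the vertical Lagrangian distribution $\gamma = \ker(d\pi)$ of $T^*D$. A priori this caustic is arbitrarily complicated, but the key observation is that the existence of a global field of Lagrangian planes on $TW$ pulls back to a stable trivialization of $\gamma$ restricted to $\widetilde{\Lambda}$, since the difference between the global polarization and the vertical distribution of the cotangent bundle is a loop in the Lagrangian Grassmannian that becomes trivial after one stabilization (by combining the global polarization on $W$ with the polarization coming from $T^*D$ along $D$, and invoking that $D$ is contractible). Because $\widetilde{\Lambda}$ is a Legendrian sphere, Theorem \ref{thm:intro} applies and yields a Hamiltonian (hence Legendrian, via the contactization) isotopy of $\widetilde{\Lambda}$ after which its front only has semi-cubical cusp singularities. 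This isotopy can be realized by a homotopy of the Weinstein structure on $W$ by a standard parametric argument.

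Finally, I would apply Starkston's local model \cite{St18} for the arborealization of a semi-cubical cusp to replace each cusp of the front, together with a neighborhood of the attaching Lagrangian along the cusp locus, with the standard arboreal model. The resulting skeleton is a union of smooth Lagrangian strata glued along arboreal singularities coming from two sources: the generic transverse intersection of the attaching Lagrangian with the cotangent fiber (modeled on the $A_2$ arboreal singularity, of height $1$) along the smooth fold locus of the caustic, and the cusp model itself (modeled on the $A_3$-type signed rooted tree, of height $2$) at the isolated cusp points. In particular all arboreal singularities that appear have height at most $2$, as claimed.

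The main obstacle, and the crux of the argument, is the verification in the second paragraph that the polarization hypothesis really does translate into the stable triviality hypothesis of Theorem \ref{thm:intro}. This is essentially a bookkeeping exercise comparing three Lagrangian distributions along $\widetilde{\Lambda}$ (the global polarization, the vertical distribution of $T^*D$ along $D$ extended by parallel transport, and the tangent bundle $T\widetilde{\Lambda}$ itself), but one must take care that the extensions are performed over contractible regions so that no obstruction is picked up beyond the one that genuinely controls the caustic. Once this homotopical fact is established, the rest of the proof is a direct concatenation of the cited h-principle \cite{AG18b}, Theorem \ref{thm:intro}, and Starkston's local arborealization model.
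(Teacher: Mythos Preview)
Your strategy is essentially the paper's: blow up the Darboux ball to a cotangent model $U^*D^n$, view the attaching sphere as a Legendrian $\Lambda \subset S^*D^n$, use the polarization to verify the stable triviality hypothesis of Theorem~\ref{thm:intro}, simplify the front to semi-cubical cusps, and then invoke Starkston's local model. Two points need correction.

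First, your stable triviality argument is imprecise. There is no ``loop'' here; $\Lambda$ is an $(n-1)$-sphere and the relevant obstruction lives in $\pi_{n-1}\Lambda_{n-1}$. The clean statement is: the once-stabilized class of $\nu|_\Lambda$ in $\pi_{n-1}\Lambda_n$ is represented by $\nu|_\Lambda \oplus (\text{Liouville direction})$, which is the vertical distribution of $U^*D^n$ restricted to $\Lambda$. After homotoping the global polarization $\eta$ to agree with this vertical distribution on $U^*D^n$ (possible since $D^n$ is contractible), the key point is that $\eta$ extends over the \emph{core disk} $W^s(x_n)$ of the handle, not over $D$; that extension is what kills the class in $\pi_{n-1}\Lambda_n$. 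Your write-up conflates these two disks.

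Second, your accounting of the arboreal heights is garbled. After applying Theorem~\ref{thm:intro} the \emph{only} front singularities are semi-cubical cusps, and these form a closed hypersurface in $\Lambda$, not isolated points. The height~$1$ ($A_2$) singularities of the skeleton appear along the \emph{smooth} (non-singular) part of the front where the attaching Lagrangian meets $D^n$ transversely; Starkston's model then trades the entire cusp hypersurface for height~$2$ arboreal singularities (and, as the paper notes, one must also insert a wall to absorb the propagated singularities). Your sentence placing height~$1$ ``along the smooth fold locus'' and height~$2$ ``at the isolated cusp points'' has this backwards and misstates the dimension.
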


\begin{proof}
	One of the critical points of $\phi$ is a minimum $x_0$. Let us assume that the other critical point $x_n$ has the maximal index $n$, the subcritical case being easier and left as an exercise for the reader. A neighborhood of $x_0$ is exact symplectomorphic to the standard Darboux ball $(B^{2n}, pdq-qdp)$ and the stable manifold $W^s(x_n)$ of $x_n$ intersects $\p B^{2n}=S^{2n-1}$ in an $(n-1)$-dimensional sphere $\Lambda$ which is Legendrian for the standard contact structure. 
	
	Following Starkston \cite{St18} we may deform the Weinstein structure of $(W, \lambda, \phi)$ in a neighborhood of $x_0$ from the standard Darboux model $(B^{2n}, pdq -qdp)$ to the standard cotangent model $(U^*D^n , pdq)$, where $U^*D^n = \{ (q,p) \in T^*D^n , \, \|p\| \leq 1 \}$. Moreover, we may arrange it so that the global Lagrangian distribution $\eta \subset TW$ agrees with the vertical distribution of $U^*D^n$ at its center point $x_0$, and hence after a homotopy of $\eta$ we may assume that it agrees with the vertical distribution on all of $U^*D^n$.
	
	Next, observe that $\p D^n \subset \p (U^*D^n)$ corresponds to a Legendrian unknot in $(\p B^{2n} , \xi_{\text{std} })$ which by a general position argument may be assumed to be disjoint from $\Lambda$. Hence we may now think of $\Lambda$ as a Legendrian submanifold in $S^*D^n = \{ (p,q) \in T^*D^n , \, \|p \|=1 \} \subset \p(U^*D^n)$. The singularities of the restriction $\pi|_\Lambda: \Lambda \to D^n$ of the front projection $\pi : S^*D^n \to D^n$ are the same as the tangencies of $\Lambda$ with respect to the distribution $\nu \subset T(S^*D^n)$ tangent to the fibres of $\pi$. Theorem \ref{thm:intro} says that it will be possible to deform $\Lambda$ by a Legendrian isotopy so that these singularities consist only of semi-cubical cusps as soon as we know that $\nu|_{\Lambda}$ is stably trivial as an element of $\pi_{n-1} \Lambda_{n-1}$. We remind the reader that here we are implicitly using the trivialization induced by a Weinstein neighborhood and the isomorphism of symplectic vector bundles $T(T^*\Lambda)|_\Lambda \simeq \Lambda \times \C^{n-1}$.
	
	Now, the image of $\nu_{\Lambda}$ under the stabilization map $\pi_{n-1} \Lambda_{n-1} \to \pi_{n-1} \Lambda_n$ can be identified with the direct sum of $\nu|_{\Lambda}$ with the Liouville direction, which is the vertical distribution of $U^*D^n$ restricted to $\Lambda$. By construction, on $U^*D^n$ this vertical distribution agrees with our globally defined Lagrangian field $\eta \subset TW$. In particular we see that the stabilization of $\nu|_\Lambda$ extends to the $n$-disk given by the stable manifold $W^s(x_n)$, which implies that the stabilization of $\nu|_{\Lambda}$ is trivial as an element of $\pi_{n-1} \Lambda_n$. This is precisely what we needed to show.
	
	Therefore by Theorem \ref{thm:intro} we may find a Legendrian isotopy $\Lambda_t$ of $\Lambda=\Lambda_0$ in $S^*D^n$ such that $\pi|_{\Lambda_1} : \Lambda_1 \to D^n$ has singularities consisting only of semi-cubical cusps, and this Legendrian isotopy can be realized by a homotopy of the ambient Weinstein structure. The new skeleton is arboreal outside of the cusp locus, with arboreal singularities of height $\leq 1$. To conclude the proof it remains to arborealize the semi-cubical cusps. To do this one may directly invoke \cite{St18}, hence the proof is complete.
	
	For the benefit of the reader let us briefly explain how this works. First, one introduces an explicit local model near the cusps to replace them with arboreal singularities, which are of height 2. The model propagates new arboreal singularities in the Liouville direction, so this modification is not local near the cusps. To fix this, one can insert a wall along $\Lambda_1$ on which the propagated singularities land. After a generic perturbation this results in new arboreal singularities of height 2 where before there were fold tangencies of $\Lambda_1$ with respect to $\nu$.  \end{proof}

\begin{figure}[h]
\includegraphics[scale=0.5]{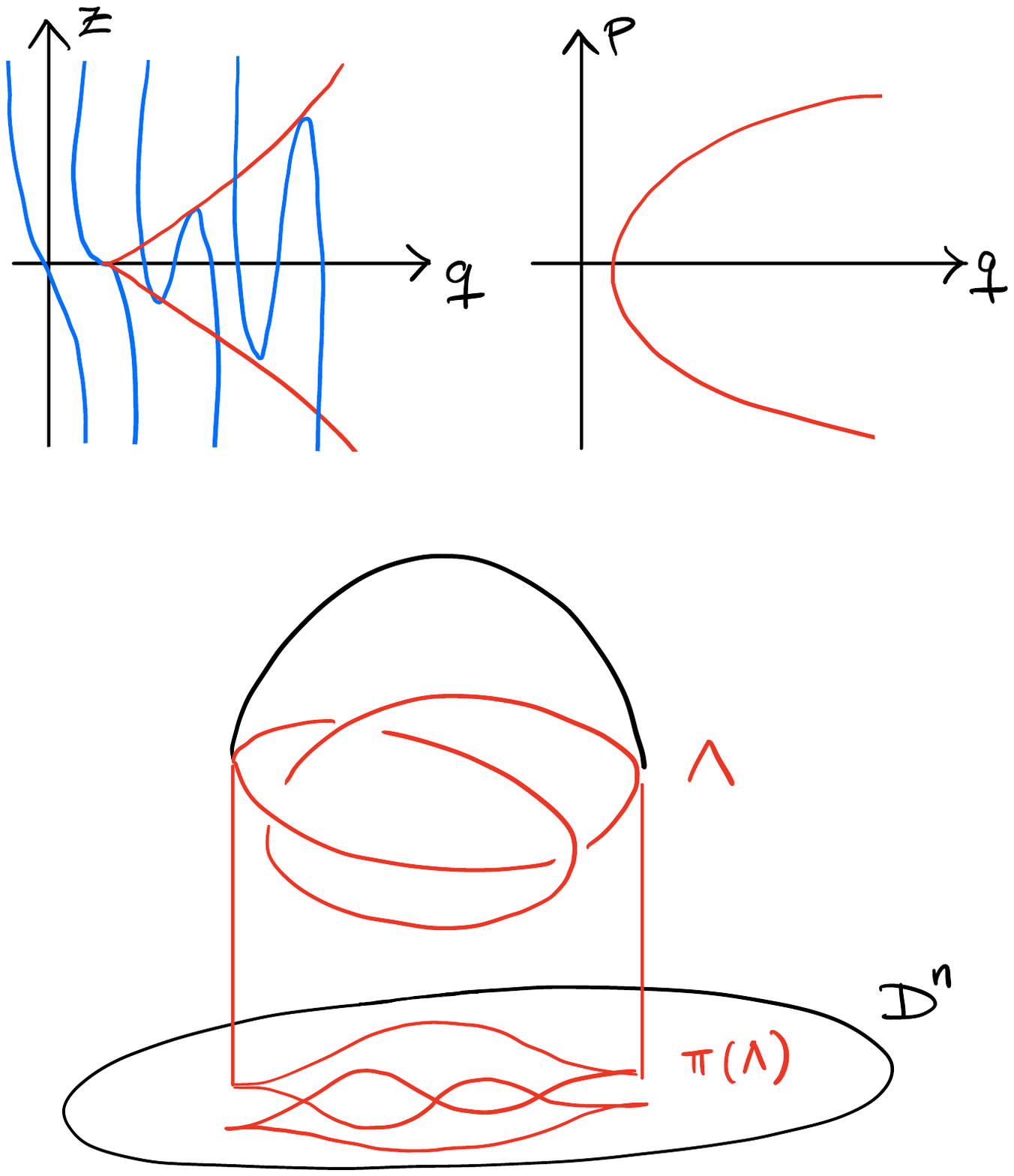}
\caption{The skeleton after the application of the h-principle but before trading the cusps for arboreal singularities of height 2.}
\label{arboreal}
\end{figure}

\subsection{Nearby Lagrangian homotopy spheres admit framed generating functions}\label{sec: ap2}

Let $\Sigma_0$ be an $n$-dimensional homotopy sphere and let $\Sigma_1 \subset T^*\Sigma_0$ be a Lagrangian embedding of another homotopy sphere $\Sigma_1$. In \cite{ACGK20} it is proved that the stable Gauss map $\Sigma_1 \to U/O$ is trivial, which is equivalent to the statement that the vertical distribution of $T^*\Sigma_0$ is stably trivial as a Lagrangian distribution defined along $\Sigma_1$. Therefore, Theorem \ref{thm:intro} implies the following result.

\begin{corollary}\label{cor: folded nearby}
	There exists a compactly supported Hamiltonian isotopy $\varphi_t$ of $T^*\Sigma_0$ such that $\varphi_1(\Sigma_1) \subset T^*\Sigma_0$ only has fold tangencies with respect to the vertical distribution.
\end{corollary}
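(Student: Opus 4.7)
The plan is to invoke Theorem \ref{thm:intro} directly with $M = T^*\Sigma_0$, $L = \Sigma_1$, and $\gamma = \ker(d\pi) \subset T(T^*\Sigma_0)$ the vertical distribution of the cotangent projection $\pi : T^*\Sigma_0 \to \Sigma_0$. The ambient hypotheses of Theorem \ref{thm:intro} are already in place: $\Sigma_0$ and $\Sigma_1$ are homotopy spheres, and $\Sigma_1 \subset T^*\Sigma_0$ is a Lagrangian submanifold. The only nontrivial input needed is the stable triviality of $\gamma|_{\Sigma_1}$ in the sense of Definition \ref{def: stab triv 1}.

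To supply this input, the first step is to invoke the theorem of Abouzaid--Courte--Guillermou--Kragh \cite{ACGK20}, which tells us that the stable Gauss map $\Sigma_1 \to U/O$ is null-homotopic. The next step, which I regard as the main obstacle, is to translate this homotopy-theoretic vanishing into the precise geometric form of Definition \ref{def: stab triv 1}. Concretely, a choice of compatible almost complex structure on $T^*\Sigma_0$ turns $T(T^*\Sigma_0)|_{\Sigma_1}$ into a Hermitian vector bundle in which both $\gamma|_{\Sigma_1}$ and $T\Sigma_1$ are Lagrangian subbundles. By construction, the stable Gauss map of $\Sigma_1$ records the class of $T\Sigma_1$ relative to $\gamma|_{\Sigma_1}$ in $[\Sigma_1,U/O]$, so its vanishing is equivalent to the existence of a homotopy between $T\Sigma_1 \oplus \R$ and $\gamma|_{\Sigma_1} \oplus \R$ as Lagrangian distributions in $T(T^*\Sigma_0)|_{\Sigma_1} \oplus \C$, which is exactly the stable triviality of $\gamma|_{\Sigma_1}$.

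With the hypothesis verified, Theorem \ref{thm:intro} produces a Hamiltonian isotopy $\psi_t$ of $\Sigma_1$ in $T^*\Sigma_0$ such that $\psi_1(\Sigma_1)$ has only fold tangencies with respect to $\gamma$. To conclude, I would upgrade this to a compactly supported ambient Hamiltonian isotopy $\varphi_t$ of $T^*\Sigma_0$: the h-principle Theorem \ref{thm:h-p} from \cite{AG18b} produces the isotopy as the flow of a time-dependent Hamiltonian defined on a neighborhood of the sweep $\bigcup_t \psi_t(\Sigma_1)$, which is compact because $\Sigma_1$ is compact, so multiplying this Hamiltonian by a smooth bump function that is $1$ on the sweep and vanishes outside a slightly larger compact neighborhood yields the required $\varphi_t$ without altering the endpoint $\varphi_1(\Sigma_1) = \psi_1(\Sigma_1)$.

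Beyond the dictionary between the two notions of stable triviality, the argument is essentially a direct application of the main theorem; there is no further hard work required, and in particular no new homotopical computation is needed since everything reduces to the stably trivial regime already handled by Theorem \ref{thm:intro}.
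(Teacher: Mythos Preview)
Your proposal is correct and follows essentially the same approach as the paper: cite \cite{ACGK20} for the triviality of the stable Gauss map $\Sigma_1 \to U/O$, observe that this is equivalent to the stable triviality of the vertical distribution along $\Sigma_1$, and apply Theorem \ref{thm:intro}. You supply a bit more detail than the paper (which dispatches the corollary in one sentence) on the dictionary between the two notions of stable triviality and on the compact support of the isotopy, but there is no substantive difference.
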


This result has the following consequence. In \cite{ACGK20}, the triviality of the stable Gauss map $\Sigma_1 \to U/O$ is deduced as a consequence of an existence theorem for generating functions. This theorem states that $\Sigma_1$ can be presented as the Cerf diagram of a function $f:W \to \R$, where $W \to \Sigma_0$ is a bundle of tubes in the sense of Waldhausen \cite{W82}. We briefly recall the relevant definitions, and refer the reader to \cite{ACGK20} for futher details.

Let $E$ be a $k$-dimensional linear subspace of $\R^N$. Consider the codimension zero submanifold $T_E \subset \R^{N+1}$ obtained by attaching to the half-space $\{x_{N+1} \leq 0 \}$ a standard $(N+1)$-dimensional index $k$ handle along the unit sphere of $E \subset \{x_{N+1}=0\}$. We call $T_E$ a {\it rigid tube}. We call a {\it tube} any codimension zero submanifold $T \subset \R^{N+1}$ which is the image of a rigid tube under a smooth isotopy fixed outside of a compact set. 

\begin{figure}[h]
\includegraphics[scale=0.55]{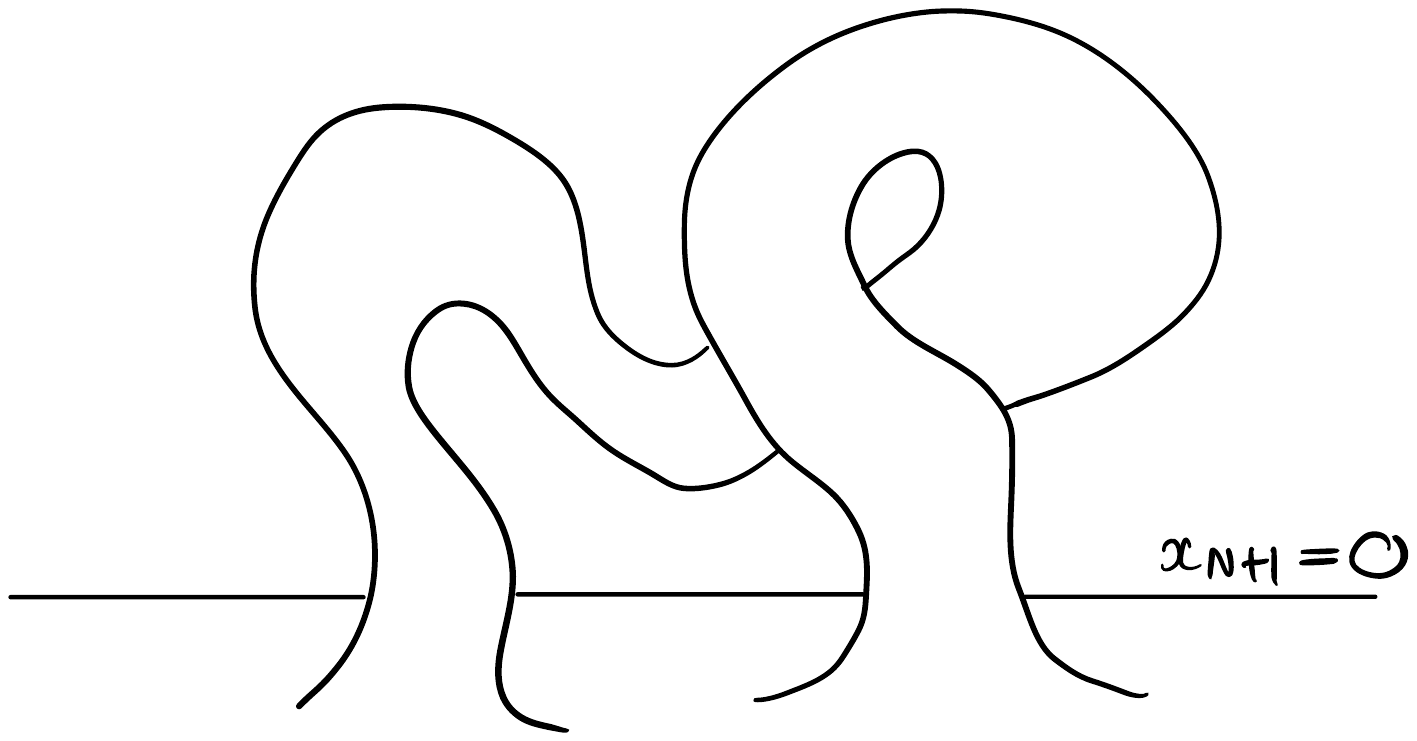}
\caption{A tube.}
\label{tube}
\end{figure}

\begin{definition} Let $M$ be a closed manifold. A {\em tube bundle} $W \to M$ is a smooth fibre bundle of manifolds whose fibres are tubes $T \subset \R^{N+1}$ in a fixed Euclidean space. \end{definition}

Let $T \subset \R^{N+1}$ be a tube. We consider functions $g:T \to \R$ such that:
\begin{itemize}
	\item[(1)] $\partial T$ is a regular level set of $g$.
	\item[(2)] $g=x_{N+1}$ outside of a compact set.
\end{itemize}

Let $W \to M$ be a tube bundle. We consider functions $f:W \to \R$ such that the restriction of $f$ to each fibre is a function satisfying (1) \& (2) and such that:
\begin{itemize}
	\item[(3)] the fibrewise Euclidean gradient $\nabla_Tf:W \to \R^{N+1}$ has 0 as a regular value.
\end{itemize}

We denote by $f_m:T_m \to \R$ the restriction of $f$ to the fibre over $m \in M$. 

\begin{definition} Let $W \to M$ be a tube bundle. The {\it Cerf diagram} of a function $f:W \to \R$ is the subset $\{(m,z) \in M \times \R : \, \, \text{$z$ is a crticial value of $f_m$} \} \subset M \times \R$. \end{definition}

Recall that $J^1M=T^*M \times \R$ is equipped with the canonical contact form $\alpha=dz-pdq$ for $pdq$ the Liouville form on $T^*M$. A Legendrian submanifold $\Lambda \subset J^1M$ is a smooth submanifold of the same dimension as $M$ such that $\alpha|_\Lambda=0$. The front projection of a Legendrian $\Lambda \subset J^1M$ is its image under the map $J^1M \to J^0M$, where we recall $J^0M=M \times \R$.

\begin{definition} Let $W \to M$ be a tube bundle, $\Lambda \subset J^1M$ a Legendrian submanifold and  $f:W \to \R$ a function satisfying (1), (2) \& (3). We say that $f$ is a {\em generating function} for $\Lambda$ if the symplectic reduction defines an embedding $\{\p_F f =0 \} \to J^1 M$ with image $\Lambda$.   \end{definition}

\begin{remark}
In particular, note the front projection of $\Lambda$ is  the Cerf diagram of $f$. 
\end{remark}

We can now state the existence theorem for generating functions:

\begin{theorem}[\cite{ACGK20}]\label{thm:existence-gf}
	Let $\Sigma_1 \subset T^*\Sigma_0$ be a Lagrangian homotopy sphere. Then there exists a tube bundle $W \to \Sigma_1$ and a function $f: W \to \R$ such that $f$ generates $\Lambda_1$.
\end{theorem}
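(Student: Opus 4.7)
My approach would combine three strands of technology: a Legendrian lift of $\Sigma_1$ to $J^1 \Sigma_0$, Guillermou's sheaf quantization of exact Lagrangians, and the Waldhausen--Igusa theory of parametrized Morse functions on bundles of tubes.

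First, I would note that because $\Sigma_0$ is a homotopy sphere, $H^1(\Sigma_1;\R) = 0$, so the restriction of the tautological $1$-form to $\Sigma_1$ is exact. Hence $\Sigma_1$ admits a Legendrian lift $\Lambda_1 \subset J^1 \Sigma_0$. By the nearby Lagrangian theorem of Abouzaid--Kragh, the front projection $\Lambda_1 \to \Sigma_0$ is a simple homotopy equivalence, so $\Lambda_1$ is itself a homotopy sphere and the parametrized topology of the eventual construction is essentially controlled by that of $\Sigma_0$.

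Next I would apply Guillermou's sheaf quantization to produce a constructible sheaf $F$ on $\Sigma_0 \times \R$ whose positive microsupport equals $\Lambda_1$ together with the zero section. Over each $m \in \Sigma_0$, the stalkwise data of $F$ packages a finite chain complex which one wants to realize as the Morse chain complex of a function on a tube fibre; fitting these complexes into a coherent family over $\Sigma_0$ yields an abstract generating object for $\Lambda_1$ supported on some fibre bundle $E \to \Sigma_0$ (initially only a vector bundle or complex thereof, rather than a bundle of tubes).

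The hard part, and the technical heart of the theorem, is to upgrade this abstract family to a genuine smooth function $f \colon W \to \R$ on an honest tube bundle $W \to \Sigma_0$ whose Cerf diagram recovers $\Lambda_1$. Here I would use Waldhausen's manifold model for the algebraic $K$-theory of spaces, together with the parametrized $h$-cobordism theorem, to geometrize the abstract fibrewise handle data as a genuine family of tubes, and Igusa's framed-function theory to supply $f$. The main obstacle is parametrized secondary obstruction theory: one must check that the higher-order obstructions to realizing the sheaf-theoretic data by a Waldhausen tube attachment vanish, which for $\Sigma_0$ a homotopy sphere holds in the relevant range because $A(\Sigma_0)$ is concentrated accordingly. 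A final routine adjustment arranges the behaviour of $f$ at infinity to satisfy the boundary conditions in the definition of a generating function.
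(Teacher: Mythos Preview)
The paper does not prove this theorem at all: it is quoted from \cite{ACGK20} and used as a black box. So there is no ``paper's own proof'' to compare against, and your proposal cannot be assessed in those terms.

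That said, as a sketch of what actually happens in \cite{ACGK20}, your outline is in the right spirit in places but misfires in others. The first two paragraphs are broadly correct: one lifts $\Sigma_1$ to a Legendrian $\Lambda_1 \subset J^1\Sigma_0$ and invokes Guillermou's sheaf quantization to produce a sheaf on $\Sigma_0 \times \R$ with the right microsupport, which one then wants to interpret as fibrewise Morse data. The Waldhausen tube model is also genuinely relevant. However, your third paragraph drifts: Igusa's framed-function theory is \emph{not} an input to the construction of the generating function in \cite{ACGK20}; rather, in the present paper, the framed-function structure is obtained \emph{after} Theorem~\ref{thm:existence-gf} by combining it with the caustic-simplification h-principle (this is precisely the content of Corollaries~\ref{cor: generalized} and~\ref{cor:framed}). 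Your invocation of the parametrized $h$-cobordism theorem and the claim that ``$A(\Sigma_0)$ is concentrated accordingly'' are vague and not how the argument in \cite{ACGK20} proceeds; the actual mechanism involves twisted generating functions and an analysis of the obstruction to untwisting via the $J$-homomorphism, which is a rather different story from what you describe.

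In short: there is nothing to review here against the paper, and if you intend this as a summary of \cite{ACGK20}, the framed-function and $A$-theory steps should be removed or substantially rewritten.
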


Work in progress of the first author with K. Igusa aims to study such Lagrangian homotopy spheres via the the parametrized Morse theory of the generating function $f:W \to \R$, thought of as a family of functions on the fibres $f_x:W_x \to \R$, $x \in \Sigma_0$. However, theorem \cite{ACGK20} provides no a priori control over the singularities of this family. In particular, there is no guarantee that each $f_x$ is Morse or generalized Morse, nor can this be arranged by a generic perturbation. Here by generalized Morse we mean cubic, i.e. the normal form for Morse birth/death, so that at the moment of bifrucation $f_x$ takes the normal form given by $x^3$ direct sum the Morse normal form in the other coordinates. 

While existing h-principles in the literature \cite{I87}, \cite{EM12} ensure that the function $f:W \to \R$ may be deformed by a homotopy $f_t:W \to \R$ so that the restriction of $f_1:W \to \R$ to each fibre is Morse or generalized Morse, in general such a homotopy will generate a Lagrangian cobordism rather than a Lagrangian isotopy, and moreover will introduce self-intersection points so that in particular the end result is an immersed rather than embedded exact Lagrangian submanifold. One may overcome this issue by using Corollary \ref{cor: folded nearby} instead.

\begin{corollary}\label{cor: generalized}
	There exists a compactly supported Hamiltonian isotopy $\varphi_t$ of $T^*\Sigma_0$ such that $\varphi_1(\Sigma_1) \subset T^*\Sigma_0$ is generated by a function $f:W \to \R$ on a tube bundle $W \to \Sigma_0$ with the property that the restriction $f_x : W_x \to \R$ to each fibre is Morse or generalized Morse.
\end{corollary}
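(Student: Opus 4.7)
The plan is to combine three ingredients: the existence of an initial generating function from \cite{ACGK20} (Theorem \ref{thm:existence-gf}), the fold simplification provided by Corollary \ref{cor: folded nearby}, and the homotopy lifting property for generating families established by Sikorav \cite{Si86}. Heuristically, fold tangencies with respect to the vertical distribution are exactly the caustic model corresponding to the cubic (i.e.\ birth/death) degeneracy of a fibrewise Morse family, so once $\varphi_1(\Sigma_1)$ has only folds the associated generating function automatically becomes fibrewise Morse or generalized Morse.

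First I would invoke Theorem \ref{thm:existence-gf} to produce a tube bundle $W_0 \to \Sigma_0$ and a function $f_0 : W_0 \to \R$ whose Cerf diagram realizes $\Sigma_1 \subset T^*\Sigma_0$ as a generated Lagrangian. Next I would apply Corollary \ref{cor: folded nearby} to obtain a compactly supported Hamiltonian isotopy $\varphi_t$ of $T^*\Sigma_0$ such that $\varphi_1(\Sigma_1)$ has only fold tangencies with respect to the vertical distribution. At this point I have an initial generating function and a terminal Lagrangian with the desired tangential geometry, but no generating function for the terminal Lagrangian.

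To bridge this gap I would appeal to the homotopy lifting property for generating families. Given the Lagrangian isotopy $\varphi_t(\Sigma_1)$ and the initial generating function $f_0$ for $\Sigma_1=\varphi_0(\Sigma_1)$, Sikorav's theorem (adapted to the tube bundle setting as in \cite{ACGK20}, after a sufficiently large stabilization of $W_0$ into a bigger tube bundle $W \to \Sigma_0$) yields a smooth one-parameter family $f_t : W \to \R$ of functions on this stabilized tube bundle such that $f_t$ generates $\varphi_t(\Sigma_1)$ for every $t \in [0,1]$. The stabilization is harmless since stabilizing a tube bundle preserves the class of fibrewise Morse and generalized Morse functions (the stabilization contributes a fibrewise nondegenerate quadratic form).

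Finally I would identify the fibrewise singularity type of $f_1 := f|_{t=1}$. A fibrewise critical point of $f_1$ over $x \in \Sigma_0$ corresponds under symplectic reduction to a point of $\varphi_1(\Sigma_1)$ lying in the cotangent fibre $T^*_x\Sigma_0$; transverse points of $\varphi_1(\Sigma_1)$ with the vertical distribution produce fibrewise nondegenerate (Morse) critical points, while the canonical local model for a Lagrangian fold $L = \{q = p^2\} \times \R^{n-1}$ generated by $f(p, q') = \frac{p^3}{3} - \cdots$ on the appropriate tube shows that fold tangencies produce precisely cubic birth/death (generalized Morse) critical points of $f_1$ on the corresponding fibre. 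Since Corollary \ref{cor: folded nearby} guarantees that no other tangency types occur, $f_1$ is fibrewise Morse or generalized Morse, completing the proof.

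The main obstacle I anticipate is checking that the homotopy lifting property of \cite{Si86}, which is classically stated for trivial stabilizations $M \times \R^N$, can be applied in the tube bundle category of \cite{ACGK20}; this likely requires either citing a tube-bundle version already contained in \cite{ACGK20} or carrying out a small cobordism-theoretic argument to reduce stabilizations of tube bundles to the classical setting. The translation between fold tangencies and cubic fibrewise critical points is standard singularity theory and should be routine once the lifting property is secured.
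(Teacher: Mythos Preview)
Your proposal is correct and follows essentially the same route as the paper: existence of a generating function from \cite{ACGK20}, fold simplification via Corollary \ref{cor: folded nearby}, the homotopy lifting property (the paper cites \cite{EG98} rather than \cite{Si86}, but the content is the same), and the standard identification of fold tangencies with fibrewise birth/death points. The technical obstacle you flag is exactly the one the paper addresses, and it is resolved just as you suggest, by stabilizing and then ``folding down'' the extra Euclidean directions to land back in the tube bundle category (with a reference to \cite{ACGK20} for the details).
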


\begin{proof}
	This follows from a combination of Theorem \ref{thm:existence-gf}, which ensures existence of a generating function for $\Sigma_1$, together with Corollary \ref{cor: folded nearby} and the homotopy lifting property for generating functions. Indeed, if a function generates a Lagrangian whose tangencies with the vertical distribution consist only of folds, then the function has Morse birth/death singularities along the tangency locus and is Morse elsewhere. This is essentially the whole proof, but we have been vague about which version of the homotopy lifting property we are using so let us expand on this point.
	
	For example, if we use the version for fibrations at infinity as stated in \cite{EG98}, then the conclusion is that the isotopy $\varphi_t(\Lambda_1)$ may be generated by a homotopy of functions $f_t:W \times \R^{2m} \to \R$, where $f_0=f + Q(x,y)$ for $Q(x,y)=\|x\|^2-\|y\|^2$ and $f_t$ is a compact perturbation of $f$. This is not quite what we want, but the correction is a standard construction in pseudo-isotopy theory. Indeed, it is easy to get what we want by `folding down' the extra dimensions in the factor $\R^{2m}$, see for example \cite{I88}. We refer the interested reader to \cite{ACGK20}, where the appropriate notion of stabilization is discussed in detail. The result is a generating function on the $2m$-fold stabilization of $W$ in the sense of Waldhausen's stabilization map $\cT_{k,N} \to \cT_{k,N+1}$ \cite{W82}, which is still a tube bundle. This completes the proof.  \end{proof}
	
	\begin{figure}[h]
\includegraphics[scale=0.65]{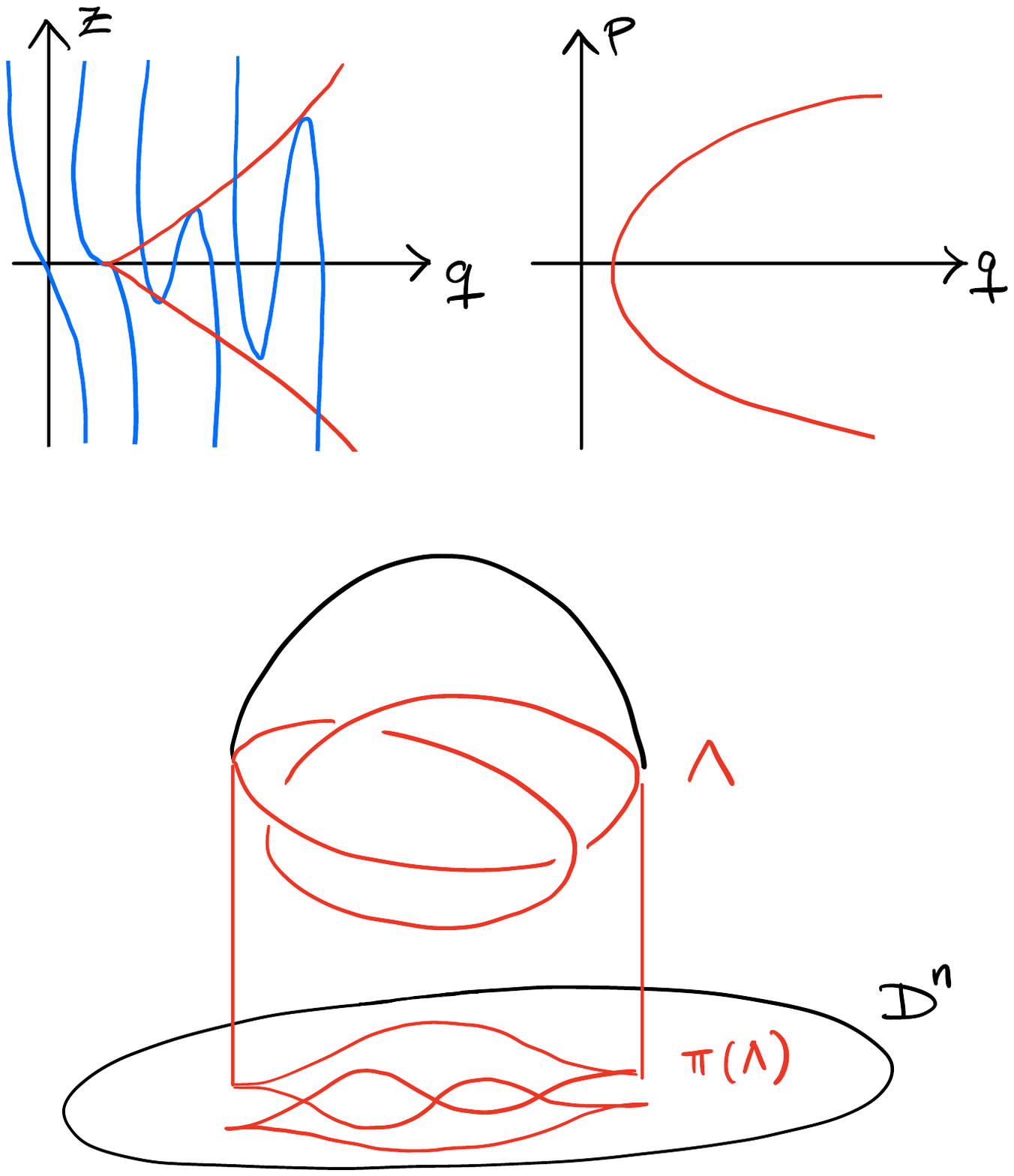}
\caption{A birth/death of Morse critical points corresponds to a semi-cubical cusp in the front projection and to a fold type tangency in the Lagrangian projection.}
\label{birthdeath}
\end{figure}

Finally, we prove that in the situation under consideration it is moreover possible to find a generating function $f:W \to \R$ which is {\em framed}, i.e. the restriction $f_x$ of $f$ to each fibre is Morse or generalized morse and furthermore the negative eigenspaces to the Hessian of $f_x$ at the critical points are equipped with framings that vary continuously with $x \in \Sigma_0$ and are suitably compatible at the birth/death points. Framed functions are useful because they are homotopically canonical \cite{I87}, \cite{EM12}, and can be used to compute higher K-theoretic invariants of the bundle they are defined over purely in terms of the associated family of Thom-Smale complexes \cite{I02}.

Intuitively, near a birth/death point the negative eigenspaces of the two critical points which come to be born or die differ by the 1-dimensional subspace in which the function is cubic, which is canonically framed by the direction in which the function is increasing. The compatibility requirement is that the framing for the negative eigenspace of the critical point of greater index is obtained from the framing of the negative space of the critical point of smaller index by adding the canonical framing of the cubic direction. 

An equivalent formulation (up to stabilization of $f$) is the following: for a function $f:W \to \R$ whose restriction to each fibre $f_x:W \to \R$ only has Morse or generalized Morse critical points, the negative eigenspaces to the Hessian of $f_x$ can be suitably stabilized depending on the index and assembled into a real vector bundle over the fibrewise critical locus of $W$, whose class in reduced topological K-theory is called the {\em stable bundle}. Then the condition that $f$ admits a framing is, up to stabilization, equivalent to the condition that the stable bundle is trivial, see \cite{I87} or \cite{EM12} for details.

\begin{corollary}\label{cor:framed}
	There exists a compactly supported Hamiltonian isotopy $\varphi_t$ of $T^*\Sigma_0$ such that $\varphi_1(\Sigma_1) \subset T^*\Sigma_0$ is generated by a framed function $f:W \to \R$ on a tube bundle $W \to \Sigma_0$.
\end{corollary}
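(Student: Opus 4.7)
The plan is to build on Corollary \ref{cor: generalized} and kill the remaining framing obstruction by means of a twisted stabilization of the generating tube bundle. First apply Corollary \ref{cor: generalized} to produce a compactly supported Hamiltonian isotopy $\varphi_t$ of $T^*\Sigma_0$ and a generating function $f:W \to \R$ on a tube bundle $W \to \Sigma_0$ whose fibrewise restrictions are Morse or generalized Morse. Under symplectic reduction the fibrewise critical locus $C_f \subset W$ projects diffeomorphically onto $\varphi_1(\Sigma_1)$, and the fibrewise negative eigenspaces of $d^2 f$, suitably stabilized across the birth--death strata, assemble into a stable vector bundle whose class $[\xi_f] \in \widetilde{KO}(C_f)$ is, up to further untwisted stabilization, the unique obstruction to the existence of a framing.

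To cancel $[\xi_f]$ I would invoke Abouzaid's theorem \cite{A12} that the projection $\varphi_1(\Sigma_1) \to \Sigma_0$ is a homotopy equivalence, which identifies $\widetilde{KO}(C_f)$ with $\widetilde{KO}(\Sigma_0)$ and hence realises $[\xi_f]$ as a class on the base. The correction is performed by replacing $W$ by a \emph{twisted stabilization} $W' \to \Sigma_0$ in the sense of Waldhausen \cite{W82}: fibrewise one attaches a trivial cancelling pair of handles to each $W_x$, but the framings of these handles are allowed to vary continuously over $\Sigma_0$ according to a prescribed classifying map $\Sigma_0 \to O/O_k$. The function $f$ extends canonically to a generating function $f':W' \to \R$ by putting a standard Morse cancelling pair on each new handle. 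By construction $f'$ still generates $\varphi_1(\Sigma_1)$ and remains fibrewise Morse or generalized Morse, while its stable bundle $[\xi_{f'}]$ differs from $[\xi_f]$ by the pullback of the twisting class along $C_{f'} \simeq C_f \to \Sigma_0$.

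Choosing the twisting class to represent $-[\xi_f]$ then gives $[\xi_{f'}] = 0$, so after one more untwisted stabilization the negative eigenspaces along $C_{f'}$ admit trivializations which are continuous on each Morse stratum and compatible across the birth--death strata in the sense of Igusa \cite{I87}. That is the definition of a framing, and the resulting framed function still generates $\varphi_1(\Sigma_1)$, completing the proof.

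The main obstacle is the careful formulation of the twisted stabilization operation for tube bundles and the verification that the map sending a twisting class to the resulting shift in the stable bundle is surjective onto $\widetilde{KO}(\Sigma_0)$, all while preserving the generating property, the embeddedness of $\varphi_1(\Sigma_1)$, and the fibrewise Morse or generalized Morse structure. These are standard constructions in Waldhausen--Igusa parametrized Morse theory, and I would make them precise following the treatment of stabilizations of tube bundles given in \cite{W82, ACGK20} together with Igusa's framed function theory \cite{I87, I02}.
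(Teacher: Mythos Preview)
Your overall strategy coincides with the paper's: start from Corollary~\ref{cor: generalized}, identify the stable bundle on the fibrewise critical locus as the sole obstruction to framing, use Abouzaid's theorem \cite{A12} to realize it as the pullback of a bundle on $\Sigma_0$, and then kill it by a twisted stabilization of the tube bundle.

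The one point where your description diverges from the paper, and where it is not quite right as stated, is the mechanism of the twisted stabilization. You describe it as attaching a cancelling pair of handles to each fibre and extending $f$ by a ``Morse cancelling pair'' on the new handle. Taken literally this introduces new fibrewise critical points, and hence new sheets in the Cerf diagram, so the resulting function would no longer generate $\varphi_1(\Sigma_1)$. The paper's construction avoids this: one chooses a real vector bundle $F \to \Sigma_0$ with $E \oplus \pi^*F$ trivial, forms the family of nondegenerate quadratic forms $Q_F:\R^{2m}\to\R$ whose negative eigenspaces realize $F$, and then ``folds down'' the extra $\R^{2m}$ directions of $f+Q_F$ into a genuine tube bundle $W_F$ (this is Waldhausen's stabilization $\cT_{k,N}\to\cT_{k,N+1}$ iterated $2m$ times, twisted by $F$). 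The upshot is a generating function $g:W_F\to\R$ which near its critical locus agrees with $f+Q_F$; in particular the fibrewise critical set is unchanged and the stable bundle becomes $E\oplus\pi^*F$, which is trivial. So the correction is quadratic rather than handle-theoretic, and this is exactly what preserves the generated Lagrangian.
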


\begin{proof}
	The function $f:W \to \R$ produced by Corollary \ref{cor:framed} need not admit a framing, however we may easily correct this. Let $E \to \varphi_1(\Sigma_1)$ be the stable bundle of $f$ as explained above. It is known that the projection $\Sigma_1 \to \Sigma_0$ is a homotopy equivalence \cite{A12}, hence $\varphi_1(\Sigma_1) \to \Sigma_0$ is also a homotopy equivalence, hence we may find a real vector bundle $F \to \Sigma_0$ such that the direct sum of $E$ with the pullback $\pi^*F$ of $F$ by $\varphi_1(\Sigma_1) \to \Sigma_0$ is trivial. 
	
	We may use $F$ to perform a twisted stabilization of $W$ to obtain a new tube bundle $W_F \to \Sigma_0$, namely this is the result of `folding down' the extra dimensions as before but this time with the function $f+Q_F$, where $Q_F: \R^{2m} \to \R$ is a family of quadratic forms parametrized by $\Sigma_0$ whose negative eigenspaces form a real vector bundle isomorphic to $F$. By construction, the new tube bundle $W_F$ has the property that there exists a function $g:W_F \to \R$ generating $\varphi_1(\Sigma_1)$ which near its critical points coincides with $f + Q_F$. Hence the new generating function $g$ has stable bundle $E \oplus \pi^*F$, which is trivial. This completes the proof. \end{proof}


\begin{thebibliography}{99}
	
	\bibitem[A12]{A12} M. Abouzaid,
	\newblock {\em Nearby Lagrangians with vanishing Maslov class are homotopy equivalent},
	\newblock Inventiones mathematicae volume 189, pp251–313(2012)
	
	\bibitem[ACGK20]{ACGK20} M. Abouzaid, S. Courte, S. Guillermou, T. Kragh ,
	\newblock {\em Twisted generating functions and the nearby Lagrangian conjecture},  
	\newblock   arXiv:2011.13178
	
	
	

	
	
	
	\bibitem[AG18b]{AG18b} D. \'{A}lvarez-Gavela, 
	\newblock {\em The simplification of singularities of Lagrangian and Legendrian fronts},  
	\newblock Inventiones Mathematicae, 214(2) (2018) 641–737.  
	
	\bibitem[AGEN20a]{AGEN20a} D. Alvarez-Gavela, Y. Eliashberg, and D. Nadler, 
	\newblock {\em Arborealization I: Stability of arboreal models}, arXiv:2101.04272
	
	\bibitem[AGEN19]{AGEN19} D. Alvarez-Gavela, Y. Eliashberg, and D. Nadler, 
	\newblock {\em Arborealization II: Geomorphology of Lagrangian ridges}, arXiv:1912.03439 
	
	\bibitem[AGEN20b]{AGEN20b} D. Alvarez-Gavela, Y. Eliashberg, and D. Nadler, 
	\newblock {\em Arborealization III: Positive arborealization of polarized Weinstein manifolds}, arXiv:1912.03439 
	
	\bibitem[A67]{A67} V.I. Arnold,
	\newblock{ \em Characteristic class entering in quantization conditions }
	\newblock Functional Analysis and Its Applications volume 1, pages1–13(1967)
	
	
	\bibitem[A90]{A90} V.I.~Arnold, 
	\newblock {\em Singularities of caustics and wavefronts} Kluwer Academic Publishers, 1990.
	
	
	\bibitem[AGV85]{AGV85}
	V.I.~Arnold, S.M.~Gusein-Zade, A.N.~Varchenko,
	\newblock {\em Singularities of Differentiable Maps, Volume I, }
	\newblock Springer, (1985).
	
\bibitem[AH61]{AH61}	
M.~Atiyah and F. Hirzebruch,
\newblock  {\em Bott Periodicity and the Parallelizability of the spheres},
\newblock Mathematical Proceedings of the Cambridge Philosophical Society, 57(2) (1961) 223-226
	
	

	
	\bibitem[B59]{B59} R. Bott,
	\newblock {\em The stable homotopy of the classical groups}, 
	\newblock Ann. of Math. 70 (1959), 313–337. 
	
	\bibitem[BM58]{BM58} R. Bott, J. Milnor,
	\newblock{\em On the parallelizability of the spheres}, 
	\newblock Bull. Amer. Math. Soc. 64(3.P1): 87-89 (May 1958).
	
	
	

\bibitem[E72]{E72} 
Y.~Eliashberg,
\newblock {\em Surgery of singularities of smooth mappings} ,
\newblock Izv. Akad. Nauk SSSR Ser. Mat. (1972) Volume 36, Issue 6.
	
	\bibitem[EG98]{EG98}
	Y.~Eliashberg and M.~Gromov.
	\newblock {\em Lagrangian intersection theory : Finite dimensional approach.}
	\newblock  American Mathematical Society Translations, 2(186), 1998.
	
	\bibitem[EM12]{EM12}
	Y.~Eliashberg and N.~Mishachev.
	\newblock {\em The space of framed functions is contractible.}
	\newblock Essays on mathematics and its applications, Springer, pages
	81--109, 2012.
	
	\bibitem[En97]{En97}
	M.~Entov,
	\newblock {\em Surgery on Lagrangian and Legendrian Singularities},
	\newblock  Geometric and Functional Analysis, 9(2) (1999) 298--352. 
	
	\bibitem[F68]{F68} D.B. Fuks,
	\newblock {\em  The Maslov-Arnold characteristic classes}
	\newblock Dokl. Akad. Nauk SSSR, 178 (1968), 303–306
	
	
	
	\bibitem[I87]{I87} K. Igusa ,
	\newblock {\em The space of framed functions},
	\newblock Transactions of the American Mathematical Society, Vol. 301, No. 2 (Jun., 1987), pp. 431-477
	
	\bibitem[I88]{I88}
	K.~Igusa.
	\newblock {\em The stability theorem for smooth pseudo-isotopies}.
	\newblock  K-Theory, 2(1), 1988.
	
	\bibitem[I02]{I02}
	K.~Igusa.
	\newblock {\em Higher Franz-Reidemeister torsion}, 
	AMS/IP  studies in advanced mathematics.
	\newblock Vol 31. American Mathematical Society, 2002.
	
	\bibitem[K78]{K78}  H. Kachi,
	\newblock {\em Homotopy groups of homogeneous space $SU(n)/SO(n)$},
	\newblock J. Fac. Sci., Shinshu University 13(1) 1978.
	
		\bibitem[K60]{K60} M. Kervaire
	\newblock {\em Some nonstable homotopy groups of Lie groups}, 
	\newblock Illinois J. Math.,4 (1960) 161-169.
	

	
	\bibitem[K58]{K58} M. Kervaire
	\newblock {\em Non-parallelizability of the n-sphere, n $>$ 7}, 
	\newblock Proc. Nat. Acad. U.S.A. 44 (1958), 280–283.
	
	\bibitem[M58]{M58} J. Milnor
	\newblock {\em Some Consequences of a Theorem of Bott}
	\newblock Annals of Mathematics, Vol. 68, No. 2 (Sep., 1958), pp. 444-449 
	
	
	\bibitem[N13]{N13} D. Nadler,
	\newblock  {\em Arboreal Singularities},
	\newblock  arXiv:1309.4122.
	
	\bibitem[N15]{N15}  D. Nadler, 
	\newblock {\em Non-characteristic expansion of Legendrian singularities}, 
	\newblock arXiv:1507.01513.
	
	\bibitem[PR03]{PR03}  T. P\"uttmann, A. Rigas,
	\newblock {\em Presentations of the first homotopy groups of the unitary groups}, 
	\newblock Commentarii Mathematici Helvetici volume 78, pages648–662(2003)
	
	\bibitem[Si86]{Si86} J. Sikorav
 \newblock {\em Sur les immersions lagrangiennes admettant une phase g\'en\'eratrice globale.}
 \newblock  C. R. Acad. Sci., Paris, S\'er. I 302, 119-122 (1986) 
	
	
	\bibitem[St18]{St18} L. Starkston, 
	\newblock {\em Arboreal Singularities in  Weinstein Skeleta}, 
	\newblock Selecta Mathematica, 24, 4105–4140 (2018)
	
	\bibitem[S51]{S51} N. Steenrod,
		\newblock {\em The Topology of Fibre Bundles.} 
	\newblock  Princeton University Press, 1951.
	
	\bibitem[W82]{W82} F. Waldhausen. 
	\newblock {\em Algebraic K-theory of spaces, a manifold approach}.
	\newblock  Canadian Mathematical Society Conference Proceedings, 2(1), 1982.
	
		
	
	
\end{thebibliography}
 \end{document}